\newtheorem{theo}{Theorem}[section]
\newtheorem{cor}[theo]{Corollary}
\newtheorem{dfn}[theo]{Definition}
\newtheorem{lemma}[theo]{Lemma}
\newtheorem{prop}[theo]{Proposition}
\newtheorem*{ques*}{Questions}
\theoremstyle{definition}
\newtheorem{exa}[theo]{Example}
\def\C{{\mathbb C}}
\def\Cal{{\rm Cal}}
\def\Diff{{\rm Diff}}
\def\Ham{{\rm Ham}}
\def\Hameo{{\rm Hameo}}
\def\Hel{{\mathcal H}}
\def\Homeo{{\rm Homeo}}
\def\id{{\rm id}}
\def\R{{\mathbb R}}
\def\Rel{{\mathcal R}}
\def\Symp{{\rm Symp}}
\def\Sympeo{{\rm Sympeo}}
\def\vol{{\rm vol}}
\def\Z{{\mathbb Z}}
\title[Helicity of strictly contact vector fields]{Helicity of vector fields preserving a regular contact form and topologically conjugate smooth dynamical systems}
\author[S.~M\"uller \& P.~Spaeth]{Stefan M\"uller and Peter Spaeth}
\email{mueller@kias.re.kr \text{\it{and}} spaeth@kias.re.kr}
\address{Korea Institute for Advanced Study, Seoul 130-722, Republic of Korea}
\subjclass[2010]{53D10, 57M50, 57R17, 37C15}
\keywords{Helicity, asymptotic Hopf invariant, Arnold invariant, regular contact form, contact vector field, suspension of surface isotopy, continuous extension, conjugation invariance, continuous contact isotopy, continuous Hamiltonian isotopy, topologically conjugate, Furstenberg transformation, higher-dimensional helicities}
\begin{document}
\thispagestyle{plain}

\begin{abstract}
We compute the helicity of a vector field preserving a regular contact form on a closed three-dimensional manifold, and improve results by J.-M.~Gambaudo and \'E.~Ghys~\cite{gambaudo:ea97} relating the helicity of the suspension of a surface isotopy to the Calabi invariant of the latter.
Based on these results, we provide positive answers to two questions posed by V.~I.~Arnold~\cite{arnold:ahi86}.
In the presence of a regular contact form that is also preserved, the helicity extends to an invariant of an isotopy of volume preserving homeomorphisms, and is invariant under conjugation by volume preserving homeomorphisms.
A similar statement also holds for suspensions of surface isotopies and surface diffeomorphisms.
This requires the techniques of topological Hamiltonian and contact dynamics developed in \cite{mueller:ghh07, mueller:ghl08, viterbo:ugh06, buhovsky:ugh11, banyaga:ugh11, mueller:gcd11}.
	
Moreover, we generalize an example of H.~Furstenberg~\cite{furstenberg:set61} of topologically conjugate but not $C^1$-conjugate area preserving diffeomorphisms of the two-torus to trivial $T^2$-bundles, and construct examples of Hamiltonian and contact vector fields that are topologically conjugate but not $C^1$-conjugate.
Higher-dimensional helicities are considered briefly at the end of the paper.
\end{abstract}

\maketitle
\section{Introduction} \label{sec:intro}
According to Arnold~\cite{arnold:ahi86}, ``The asymptotic Hopf invariant is an invariant of a divergence-free vector field on a three-dimensional manifold with given volume element.
It is invariant under the group of volume-preserving diffeomorphisms, and describes the `helicity' of the field, i.e.\ the mean asymptotic rotation of the phase curves around each other.''
If $X$ is a divergence-free vector field on a closed (i.e.\ compact and without boundary) smooth three-manifold $M$, equipped with a volume form $\mu$, then the two-form $\iota_X \mu$ is closed.
Assuming it is exact, one may choose a primitive one-form $\beta_X$, and define the \emph{helicity} (or \emph{asymptotic Hopf invariant} or \emph{Arnold invariant}) of $X$ as the real number
	\[ \Hel (X) = \int_M \beta_X \wedge d\beta_X. \]
This number does not depend on the choice of $\beta_X$ with $d\beta_X = \iota_X \mu$.
Arnold in fact gives two more equivalent definitions of the helicity, one as an average asymptotic linking number of the flow lines of $X$, and the other one equal to $\int_M g (X,Y)$, where $g$ is some auxiliary Riemannian metric on $M$, and $Y$ is a divergence-free vector field satisfying the relation $\text{curl}_g Y = X$.
It is the first construction we shall use exclusively in this work.

The asymptotic Hopf invariant generalizes the classical Hopf invariant of (the homotopy class of) a map $S^3 \to S^2$.
Arnold and B.~A.~Khesin \cite{arnold:tmh98} note that, ``Although the idea of helicity goes back to Helmholtz and Kelvin (see [Kel]), its second birth in magnetohydrodynamics is due to Woltjer [Wol] and in ideal hydrodynamics is due to Moffatt [Mof1], who revealed its topological character (see also [Mor2]).
The word `helicity' was coined in [Mof1] and has been widely used in fluid mechanics and magnetohydrodynamics since then.''
In addition to the references cited above, we also recommend \cite{ghys:kd07} for further reading, and for more details on the definition and some of its applications.
The above publications also establish the basic properties of the helicity invariant, and contain additional interesting references.
See also Section~\ref{sec:helicity}.

It follows almost immediately from the definition that the helicity depends continuously on the vector field in the $C^1$-topology, and is invariant under conjugation by volume preserving $C^1$-diffeomorphisms.
When viewed as an invariant of the volume preserving isotopy $\{ \phi_X^t \}_{0 \le t \le 1}$ generated by the vector field $X$, the helicity is not continuous with respect to the $C^0$-topology.
We recall two questions posed by Arnold regarding the topological character of the helicity.

\begin{ques*}{\cite{arnold:ahi86}}\label{que:arnold}
(i) Is the helicity invariant under conjugation by a volume preserving homeomorphism?
More precisely, if $X$ and $Y$ are (exact) divergence-free vector fields, $\phi$ a homeomorphism that preserves the measure induced by $\mu$, and $\{ \phi_Y^t \} = \{ \phi \circ \phi_X^t \circ \phi^{-1}\}$, does the identity $\Hel (X) = \Hel (Y)$ hold?

(ii) If $\{ \phi_t \}_{0 \le t \le 1}$ is an isotopy of volume preserving homeomorphisms, can one define a number $\Hel (\{ \phi_t \})$ that extends the definition for smooth isotopies?
\end{ques*}

The main purpose of this article is to address these questions.
As a first step in that direction, in Section~\ref{sec:helicity-contact} we demonstrate the following.

\begin{theo} \label{thm:helicity}
Suppose a closed three-manifold $M$ admits a regular contact form $\alpha$, and equip $M$ with the canonical volume form $\alpha \wedge d\alpha$ induced by $\alpha$.
Let $X_H$ be a strictly contact vector field with contact Hamiltonian function $H = \alpha (X_H)$.
Then $X_H$ is exact divergence-free, and
	\[ \Hel (X_H) = \left( 4 c^2 (H) - 3 c (H^2) \right) \cdot \vol (M,\alpha \wedge d\alpha), \]
where the integer $\vol (M,\alpha \wedge d\alpha)$ denotes the total volume, and $c$ the average value of a function on $M$, both with respect to the canonical volume form.
\end{theo}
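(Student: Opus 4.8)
The plan is to use the Boothby--Wang structure forced by the regularity of $\alpha$, reduce every integral over $M$ to the base surface by fibre integration, and extract the two global quantities $c(H)$ and $c(H^2)$ from a single integration by parts downstairs. First I would record the infinitesimal structure of $X_H$: Cartan's formula gives $0 = \mathcal{L}_{X_H}\alpha = d(\iota_{X_H}\alpha) + \iota_{X_H}d\alpha = dH + \iota_{X_H}d\alpha$, so $\iota_{X_H}d\alpha = -dH$, and pairing this with the Reeb field $R$ (using $\iota_R d\alpha = 0$) shows moreover that $R\cdot H = 0$, i.e.\ $H$ is Reeb-invariant. Since $\alpha$ is regular, the Boothby--Wang theorem presents $M$ as a principal circle bundle $\pi\colon M\to B$ over a closed symplectic surface $(B,\omega)$ with $d\alpha = \pi^*\omega$ and $R$ generating the $S^1$-action; Reeb invariance then means $H = \pi^* h$ for a function $h$ on $B$. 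From $\mathcal{L}_{X_H}(\alpha\wedge d\alpha)=0$ the field is volume preserving, and a direct computation gives
\[ \iota_{X_H}(\alpha\wedge d\alpha) = H\, d\alpha + \alpha\wedge dH, \]
which is closed because its differential $dH\wedge d\alpha = \pi^*(dh\wedge\omega)$ vanishes ($dh\wedge\omega$ being a three-form on the surface $B$).

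The next step produces an explicit primitive and thereby proves exactness. Rewriting with $d(H\alpha) = dH\wedge\alpha + H\,d\alpha$ yields $\iota_{X_H}(\alpha\wedge d\alpha) = 2H\,d\alpha - d(H\alpha)$, so it suffices to exhibit $H\,d\alpha$ as exact. The two-form $\bigl(h - c(H)\bigr)\omega$ on $B$ has vanishing integral (fibre integration shows $c(H)$ equals the $\omega$-average of $h$), hence equals $d\sigma$ for some one-form $\sigma$ on the surface; then $H\,d\alpha = \pi^*(h\omega) = d\bigl(c(H)\alpha + \pi^*\sigma\bigr)$. Consequently $X_H$ is exact divergence-free, with primitive
\[ \beta_X = 2\bigl(c(H)\alpha + \pi^*\sigma\bigr) - H\alpha = \bigl(2c(H) - H\bigr)\alpha + 2\pi^*\sigma. \]

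Finally I would expand $\Hel(X_H) = \int_M \beta_X\wedge d\beta_X$ with $d\beta_X = H\,d\alpha + \alpha\wedge dH$. Every term containing $\alpha\wedge\alpha$ dies, and every term of the form $\pi^*\sigma\wedge d\alpha = \pi^*(\sigma\wedge\omega)$ dies since $\sigma\wedge\omega$ is a three-form on $B$; what survives is
\[ \beta_X\wedge d\beta_X = \bigl(2c(H)H - H^2\bigr)\,\alpha\wedge d\alpha + 2\,\pi^*\sigma\wedge\alpha\wedge dH. \]
Pulling out the constant $c(H)$ and using $\int_M H\,\mu = c(H)\vol(M,\alpha\wedge d\alpha)$, the first term integrates to $\bigl(2c^2(H) - c(H^2)\bigr)\vol(M,\alpha\wedge d\alpha)$. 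For the second, writing $dH = \pi^*dh$ and rearranging to $-2\,\alpha\wedge\pi^*(\sigma\wedge dh)$, I would fibre-integrate (introducing the constant fibre period $\ell$, which drops out) to reduce it to $-2\ell\int_B \sigma\wedge dh$, then integrate by parts with $d\sigma = (h - c(H))\omega$ to obtain $\int_B \sigma\wedge dh = \int_B h^2\omega - c(H)^2\int_B\omega$. Translating back via $\ell\int_B h^2\omega = \int_M H^2\mu = c(H^2)\vol(M,\alpha\wedge d\alpha)$ and $\ell\int_B\omega = \vol(M,\alpha\wedge d\alpha)$ gives the contribution $\bigl(2c^2(H) - 2c(H^2)\bigr)\vol(M,\alpha\wedge d\alpha)$, and adding the two yields $\bigl(4c^2(H) - 3c(H^2)\bigr)\vol(M,\alpha\wedge d\alpha)$.

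The main obstacle is the genuinely nonlocal term $c^2(H)$: it cannot arise from any pointwise expression in $H$, so the crux is the construction of the global primitive $\pi^*\sigma$ and the ensuing integration by parts on $B$, which is precisely where the regular (Boothby--Wang) hypothesis and the low dimension $\dim M = 3$ are indispensable. I would be most careful with the normalization of the fibre period $\ell$, which must, and does, cancel from the final answer.
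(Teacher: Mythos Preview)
Your proof is correct and follows essentially the same route as the paper's first proof: you construct the identical primitive $\beta_X = (2c(H)-H)\alpha + 2\pi^*\sigma$ (the paper writes it as $\beta = 2p^*\gamma + (2c_H - H)\alpha$) by exhibiting a base primitive of the mean-zero form $(h - c(H))\omega$, then expand $\beta_X\wedge d\beta_X$ and integrate by parts on $B$. The only cosmetic difference is that you carry the fibre period $\ell$ explicitly whereas the paper normalizes it to $1$, and you kill the cross terms by noting they are pullbacks of three-forms on the surface while the paper phrases the same vanishing via Lemma~\ref{lem:differential}.
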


In a similar vein, Gambaudo and Ghys showed that the helicity of the suspension of a surface isotopy $\{ \phi_t \}$ is proportional to the Calabi invariant of $\{ \phi_t \}$ \cite{gambaudo:ea97}.
Using different techniques, we improve their result as follows.
See Section~\ref{sec:suspensions} for the relevant definitions.

\begin{theo}\label{thm:suspension}
Let $D^2$ be the unit disk in $\R^2$ with its standard area form $\omega$, and let $\{\phi_t\}_{0\le t\le 1}$ be a smooth area preserving isotopy of $D^2$ that is the identity near the boundary $\partial D^2$.
The helicity of the suspension $\tau_* X (\{ \phi_t \})$ with respect to the standard volume form $dV$ on $\R^3$ equals twice the Calabi invariant with respect to $\omega$ of the time-one map $\phi$ of the isotopy $\{ \phi_t \}$.
In fact,
	\[ \Hel \left( X + \frac{\partial}{\partial t} \right) = \Hel (X) + 2 \Rel \left( X, \frac{\partial}{\partial t} \right) + \Hel \left( \frac{\partial}{\partial t} \right),\]
and the first and last term vanish, while $\Rel (X,\frac{\partial}{\partial t}) = \Cal (\phi)$.
\end{theo}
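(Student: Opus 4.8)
The plan is to reduce the statement to a polarization identity for the helicity and then to evaluate three explicit integrals. Writing $Y=\frac{\partial}{\partial t}$ and choosing primitive one-forms $\beta_X,\beta_Y$ with $d\beta_X=\iota_X\,dV$ and $d\beta_Y=\iota_Y\,dV$, linearity of contraction shows that $\beta_X+\beta_Y$ is a primitive of $\iota_{X+Y}\,dV$. Expanding $\Hel(X+Y)=\int(\beta_X+\beta_Y)\wedge d(\beta_X+\beta_Y)$ produces the two diagonal terms $\Hel(X)$ and $\Hel(Y)$ together with the cross terms $\int\beta_X\wedge d\beta_Y+\int\beta_Y\wedge d\beta_X$. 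Since $d(\beta_X\wedge\beta_Y)=d\beta_X\wedge\beta_Y-\beta_X\wedge d\beta_Y$ and a $1$-form commutes past a $2$-form, Stokes' theorem identifies the two cross terms, so their sum equals $2\Rel(X,Y)$ with $\Rel(X,Y)=\int_M\beta_X\wedge d\beta_Y$. This already yields the displayed decomposition, provided the boundary term produced by Stokes' theorem vanishes; this is the first place I would invoke the hypothesis that $\{\phi_t\}$ is the identity near $\partial D^2$.

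Next I would fix coordinates $(x,y,t)$ in which $dV=\omega\wedge dt$ with $\omega=dx\wedge dy$, and exhibit the primitives. Because each $\phi_t$ preserves $\omega$ and $D^2$ is simply connected, the generating vector field $X_t=\dot\phi_t\circ\phi_t^{-1}$ is Hamiltonian, $\iota_{X_t}\omega=dH_t$, and I normalize $H_t$ to vanish where $X_t$ does, i.e.\ near $\partial D^2$. Regarding $X=X_t$ as tangent to the disk fibers and $H$ as a function on the total space, a direct contraction gives $\iota_X\,dV=(\iota_{X_t}\omega)\wedge dt=d(H\,dt)$, so $\beta_X=H\,dt$. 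For $Y=\frac{\partial}{\partial t}$ one computes $\iota_Y\,dV=\omega$, for which $\beta_Y=\tfrac12(x\,dy-y\,dx)$ is a primitive.

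The two diagonal terms then vanish for pointwise reasons: $\beta_X\wedge d\beta_X=H\,dt\wedge dH\wedge dt=0$ because of the repeated factor $dt$, so $\Hel(X)=0$, and $\beta_Y\wedge d\beta_Y=\tfrac12(x\,dy-y\,dx)\wedge dx\wedge dy=0$ for the same reason, so $\Hel\bigl(\tfrac{\partial}{\partial t}\bigr)=0$. Hence the cross term is the only surviving contribution, and
\[ \Rel\Bigl(X,\tfrac{\partial}{\partial t}\Bigr)=\int_M\beta_X\wedge d\beta_Y=\int_M H\,dt\wedge\omega=\int_M H\,dV=\int_0^1\!\!\int_{D^2}H_t\,\omega\,dt. \]
With the normalization of $H_t$ chosen above, the right-hand side is exactly the Calabi invariant of the isotopy, which is independent of the chosen isotopy and thus a function of the time-one map $\phi$ alone; therefore $\Rel(X,\tfrac{\partial}{\partial t})=\Cal(\phi)$ and $\Hel(X+\tfrac{\partial}{\partial t})=2\,\Cal(\phi)$.

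The main obstacle I anticipate is not any single computation---each integrand above is transparent---but the global bookkeeping that makes these manipulations legitimate. Concretely, I must check that $\beta_X=H\,dt$ descends to a globally defined, smooth primitive on the compactified total space of the suspension (the mapping torus embedded into $(\R^3,dV)$ via $\tau$), that $\iota_{X+\partial/\partial t}\,dV$ is genuinely exact there, and that the boundary contribution in the Stokes symmetrization of the cross terms is zero. All three rest on the same point: normalizing $H_t$ to vanish near $\partial D^2$, which is possible precisely because the isotopy is the identity there, forces $\beta_X$ to vanish near the boundary and kills every boundary term. Confirming that the embedding $\tau$ really puts the volume form into the product shape $\omega\wedge dt$ near the core, and that the Calabi integral matches the normalization of this paper so that the factor $2$ comes out correctly, are the remaining routine verifications.
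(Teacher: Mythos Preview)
Your proposal is correct and is essentially the same computation as the paper's: the paper takes the single primitive $F_t\,dt+\lambda$ for $X+\partial/\partial t$ (your $\beta_X+\beta_Y$ with $H=F_t$ and $\lambda=\tfrac12(x\,dy-y\,dx)$), expands the product, and uses Stokes' theorem with $F_t$ vanishing near $\partial D^2$ to identify the cross terms, whereas you organize the same four terms via the polarization identity and observe the diagonal ones vanish pointwise. Your version has the minor advantage of verifying explicitly the three separate claims in the displayed decomposition, which the paper's direct computation leaves implicit; the global bookkeeping you flag (extendability of primitives through the embedding $\tau$, vanishing of boundary terms) is exactly what the paper handles by noting that $F_t\,dt+\lambda$ extends to a global one-form on $\R^3$.
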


Here $\Rel(\cdot, \cdot)$ denotes the relative helicity defined below in Section~\ref{sec:helicity}.
In particular, the helicity is not $C^0$-continuous with respect to the isotopy $\{ \phi_t \}$, because the Calabi invariant is not $C^0$-continuous with respect to $\phi$ (or $\{ \phi_t \}$) \cite{gambaudo:ea97}.
The latter by definition is the real number
\begin{align} \label{eqn:calabi}
	\Cal (\phi) = \int_0^1 \int_{D^2} F_t \, \omega \, dt,
\end{align}
where $F:[0,1]\times D^2 \to \R$ is the unique normalized smooth Hamiltonian function generating the Hamiltonian isotopy $\{\phi_t\} = \{\phi^t_F\}$, and only depends on the time-one map $\phi$.

See also Section~\ref{sec:suspensions} for the definition of the double suspension of a pair of surface isotopies $\{\phi^t_1\}$ and $\{ \phi^t_2 \}$ with time-one maps $\phi_1$ and $\phi_2$, respectively.
Gambaudo and Ghys proved that its helicity depends linearly on the Calabi invariants of $\phi_1$ and $\phi_2$.
In Section~\ref{sec:suspensions} we calculate the following formula.

\begin{theo}\label{thm:double-suspension}
The helicity of the double suspension $X (\{ \phi_1^t \},\{ \phi_2^t \})$ with respect to the standard volume form $dV$ on the three-sphere $S^3$ equals
	\[ \Hel (X (\{ \phi_1^t \},\{ \phi_2^t \})) = 2 \pi^2 \cdot \left( \Cal (\phi_1) + \Cal (\phi_2) + 2 \pi^2 \right). \]
\end{theo}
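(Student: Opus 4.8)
The plan is to treat $\Hel$ as the quadratic form associated to the symmetric bilinear form $\Rel(\cdot,\cdot)$, to decompose the double suspension into a background Hopf field plus two disjointly supported perturbations, and then to compute each self-helicity and each cross term separately, in the same additive spirit as the formula in Theorem~\ref{thm:suspension}.

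First I would record the construction explicitly. Realizing $S^3 \subset \C^2$ as $\{ |z_1|^2 + |z_2|^2 = 1 \}$ with Hopf coordinates $z_j = r_j e^{i\theta_j}$, $r_1^2 + r_2^2 = 1$, I decompose $S^3$ into the two solid tori $V_1 = \{ r_1 \ge r_2 \}$ and $V_2 = \{ r_1 \le r_2 \}$, glued along the Clifford torus $\{ r_1 = r_2 \}$. The core circles $\{ z_2 = 0 \}$ and $\{ z_1 = 0 \}$ carry the suspensions of $\{ \phi_1^t \}$ and $\{ \phi_2^t \}$ in the $\theta_1$- and $\theta_2$-directions respectively, so that
\[ X(\{ \phi_1^t \},\{ \phi_2^t \}) = H_0 + W_1 + W_2, \]
where $H_0 = 2\pi(\partial_{\theta_1} + \partial_{\theta_2})$ is the Hopf field normalized so that its orbits have period one (that is, $2\pi$ times the Reeb field of the standard contact form), and each $W_j$ is the horizontal Hamiltonian perturbation carrying $\{ \phi_j^t \}$, supported in a neighborhood of the $j$-th core circle and vanishing near the Clifford torus because $\phi_j^t = \id$ near $\partial D^2$. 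Since $H^2(S^3) = 0$, each divergence-free summand is automatically exact, so all primitives exist.

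Next I would expand by polarization,
\[ \Hel(H_0 + W_1 + W_2) = \Hel(H_0) + \sum_{j=1}^2 \Hel(W_j) + 2\sum_{j=1}^2 \Rel(H_0, W_j) + 2\Rel(W_1, W_2), \]
and dispose of the easy terms. Each $\Hel(W_j)$ vanishes by the argument of Theorem~\ref{thm:suspension}, since $W_j$ is, up to the background rotation, the suspension of an area preserving isotopy of the disk. The term $\Rel(W_1, W_2)$ vanishes because $W_1$ and $W_2$ have disjoint supports: the support of $W_1$ is contained in $\{ r_2 < 1/\sqrt{2} \}$ and that of $W_2$ in $\{ r_1 < 1/\sqrt{2} \} = \{ r_2 > 1/\sqrt{2} \}$, which meet at most along the Clifford torus, a set of measure zero. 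The self-helicity of the background is the classical Hopf-field helicity: since $\iota_{H_0}\, dV$ is a constant multiple of $d\alpha$ for the standard contact form $\alpha = r_1^2\, d\theta_1 + r_2^2\, d\theta_2$, one may take the primitive $\beta_0$ to be the corresponding multiple of $\alpha$ and evaluate $\int_{S^3} \beta_0 \wedge d\beta_0$ directly; the period-one normalization contributes a factor $(2\pi)^2$ relative to the unit Reeb field and yields $\Hel(H_0) = \vol(S^3, dV)^2 = (2\pi^2)^2 = 4\pi^4$.

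The crux is the cross term $\Rel(H_0, W_j) = \pi^2\, \Cal(\phi_j)$, the $S^3$-analogue of the identity $\Rel(X, \partial/\partial t) = \Cal(\phi)$ from Theorem~\ref{thm:suspension}. I would evaluate $\Rel(H_0, W_j) = \int_{S^3} \beta_0 \wedge \iota_{W_j}\, dV$ by restricting to $V_j$ (the integrand is supported there), fibering $V_j$ over its meridian disk along the suspension circle, and identifying the resulting disk integral of the contact Hamiltonian of $W_j$ with the normalized generating Hamiltonian $F$ of $\{ \phi_j^t \}$, whose space-time integral is $\Cal(\phi_j)$ by \eqref{eqn:calabi}. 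Assembling the four contributions gives
\[ \Hel(X(\{ \phi_1^t \},\{ \phi_2^t \})) = 4\pi^4 + 2\pi^2\, \Cal(\phi_1) + 2\pi^2\, \Cal(\phi_2) = 2\pi^2 \bigl( \Cal(\phi_1) + \Cal(\phi_2) + 2\pi^2 \bigr). \]
I expect the main obstacle to lie precisely in the constant bookkeeping for this cross term: keeping track of the factor $2\pi$ from the period-one normalization of $H_0$, the $2\pi$ from integrating over the suspension circle, and the normalizations of $\omega$ and of $dV$, so that the geometric constant comes out to be exactly $\pi^2$ with no contamination between the $\theta_1$- and $\theta_2$-parts of $\beta_0$. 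An alternative route, bypassing the polarization, would verify that $X$ is strictly contact and apply Theorem~\ref{thm:helicity} with contact Hamiltonian $\alpha(X)$; matching the averages $c(\alpha(X))$ and $c(\alpha(X)^2)$ to $\Cal(\phi_1) + \Cal(\phi_2)$ requires the same computation in a different guise.
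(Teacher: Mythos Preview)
Your proposal is correct and follows essentially the same strategy as the paper: decompose the double suspension as the Reeb (Hopf) field plus two disjointly supported Hamiltonian perturbations, expand $\Hel$ via the bilinear form $\Rel$, kill the self-helicities of the $W_j$ and the cross term $\Rel(W_1,W_2)$ by Theorem~\ref{thm:suspension} and disjoint support, and identify the remaining cross terms with the Calabi invariants. The only cosmetic difference is that the paper first rescales $dV = \pi\mu$ and works with $R_\alpha$ and $\mu$, whereas you carry $dV$ throughout; one caveat is that your suggested ``alternative route'' via Theorem~\ref{thm:helicity} only works when the disk isotopies are autonomous, since otherwise the double suspension is not strictly contact.
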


In both the case of a vector field preserving a regular contact form, and the case of the suspension of a surface isotopy, the helicity is an invariant of the generating Hamiltonian \emph{function} rather than the vector field or isotopy.
This allows an extension of the invariant to isotopies of volume preserving homeomorphisms, and to show that conjugation by volume preserving homeomorphisms does not alter the helicity, provided the isotopies and homeomorphisms can be described as lifts from a surface.
This is explained in greater detail below.
As an immediate corollary to Theorem~\ref{thm:helicity} we have the following result.

\begin{cor} \label{cor:helicity}
Let $M$ be a closed three-manifold with a regular contact form $\alpha$, and $H_i$ a uniformly Cauchy sequence of smooth basic functions on $M$, such that the corresponding strictly contact isotopies $\{ \phi_{H_i}^t \}$ are uniformly Cauchy as well.
Then the sequence $\Hel (X_{H_i})$ of real numbers converges.
\end{cor}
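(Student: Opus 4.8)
The plan is to reduce the claim to Theorem~\ref{thm:helicity}, which expresses $\Hel(X_{H_i})$ entirely in terms of the two averages $c(H_i)$ and $c(H_i^2)$, and then to verify that each of these sequences of real numbers converges. Writing $V = \vol(M,\alpha\wedge d\alpha)$ for the (fixed, integer) total volume, Theorem~\ref{thm:helicity} gives
\[ \Hel(X_{H_i}) = \left(4c^2(H_i) - 3c(H_i^2)\right)\cdot V, \]
so it suffices to prove that $c(H_i)\to c(H)$ and $c(H_i^2)\to c(H^2)$, where $H$ denotes the uniform limit of the $H_i$. The hypothesis that the $H_i$ are uniformly Cauchy on the closed manifold $M$ guarantees that such a continuous limit $H$ exists.

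Next I would use that the average $c(f) = V^{-1}\int_M f\,(\alpha\wedge d\alpha)$ is a bounded linear functional with respect to the $C^0$-norm, precisely because $M$ is closed and hence has finite total volume; indeed $|c(f)|\le \|f\|_{C^0}$. Applying this to $f = H_i - H$ shows $c(H_i)\to c(H)$. For the quadratic term, I would first note that a uniformly Cauchy sequence is uniformly bounded, say $\|H_i\|_{C^0}\le K$ for all $i$, whence
\[ \|H_i^2 - H^2\|_{C^0} \le \|H_i - H\|_{C^0}\cdot\|H_i + H\|_{C^0} \le 2K\,\|H_i - H\|_{C^0} \longrightarrow 0, \]
so $H_i^2$ converges uniformly to $H^2$ and therefore $c(H_i^2)\to c(H^2)$ by the same continuity of $c$. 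Combining the two limits, the right-hand side of the displayed formula converges, and
\[ \lim_{i\to\infty}\Hel(X_{H_i}) = \left(4c^2(H) - 3c(H^2)\right)\cdot V. \]

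There is no serious obstacle here; the statement really is an immediate consequence of Theorem~\ref{thm:helicity}, the only point requiring care being the continuity of $f\mapsto c(f)$ and $f\mapsto c(f^2)$ in the $C^0$-topology, which rests on the finiteness of the volume of the closed manifold $M$. I would remark that the second hypothesis, namely that the strictly contact isotopies $\{\phi_{H_i}^t\}$ are themselves uniformly Cauchy, is not needed for the bare convergence of the numbers $\Hel(X_{H_i})$, but is what allows one to interpret the limit as the helicity of a genuine limiting object in continuous contact dynamics, so that the displayed limit defines the invariant $\Hel$ on the appropriate class of topological strictly contact isotopies. This is the role it will play in the subsequent extension of the helicity beyond the smooth category.
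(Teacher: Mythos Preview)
Your proof is correct and follows the same route as the paper: the statement is declared ``an immediate corollary to Theorem~\ref{thm:helicity},'' and in Section~\ref{sec:topo-contact} the authors simply note that the limit of $\Hel(X_{H_i})$ exists because the formula $(4c^2(H)-3c(H^2))\cdot\vol(M)$ is $C^0$-continuous in $H$. Your remark about the role of the second hypothesis is exactly right and matches the paper's use of it: convergence of the isotopies, together with the uniqueness theorem for continuous contact Hamiltonians, is what makes the limit depend only on the limiting isotopy $\Phi$ rather than on the approximating sequence, so that Definition~\ref{def:helicity} is well-defined.
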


\begin{dfn}\label{def:helicity}
Let $M$ be a closed three-manifold with a regular contact form $\alpha$.
The \emph{helicity} of a \emph{continuous strictly contact isotopy} $\{ \phi_t \}$ with unique continuous contact Hamiltonian function $H$ is defined to be
	\[ \Hel (\{ \phi_t \}) = \lim_{i \to \infty} \Hel (X_{H_i}) = \left( 4 c^2 (H) - 3 c (H^2) \right) \cdot \vol (M,\alpha \wedge d\alpha), \]
where $H_i$ is a uniformly Cauchy sequence of smooth basic functions with $\{ \phi_{H_i}^t \} \to \{ \phi_t \}$ uniformly.
\end{dfn}

The definition of a continuous strictly contact isotopy \cite{banyaga:ugh11} is given below in Section~\ref{sec:topo-contact}, where we also prove the following result.

\begin{theo} \label{thm:strictly-contact-homeo-conjugation}
Let $M$ be a closed three-manifold with a regular contact form $\alpha$.
If a homeomorphism $\phi$ of $M$ is the uniform limit of a sequence of strictly contact diffeomorphisms, then for any two smooth or continuous strictly contact isotopies $\{ \phi_t \}$ and $\{ \psi_t \} = \{ \phi \circ \phi_t \circ \phi^{-1} \}$ that are conjugated by $\phi$, we have the identity $\Hel (\{ \phi_t \}) = \Hel (\{ \psi_t \})$.
\end{theo}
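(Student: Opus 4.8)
The plan is to reduce the statement to the observation, visible from Definition~\ref{def:helicity}, that the helicity of a continuous strictly contact isotopy depends on its contact Hamiltonian $H$ only through the two averages $c(H)$ and $c(H^2)$ (together with the fixed total volume). Thus it suffices to identify the contact Hamiltonian of the conjugated isotopy $\{\psi_t\}$ and then to check that these averages are unaffected by the conjugation.

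First I would treat the smooth model case. If $\phi$ is itself a strictly contact diffeomorphism, so that $\phi^*\alpha = \alpha$, and $\{\phi_t\}$ is generated by the strictly contact vector field $X$ with $H = \alpha(X)$, then $\{\psi_t\}$ is generated by the pushforward $\phi_* X = T\phi \circ X \circ \phi^{-1}$. Using $\phi^*\alpha = \alpha$ one computes pointwise that $\alpha(\phi_* X) = (\alpha(X)) \circ \phi^{-1} = H \circ \phi^{-1}$, so the contact Hamiltonian of $\{\psi_t\}$ is $H \circ \phi^{-1}$. Moreover, since $\phi$ preserves $\alpha$ it preserves the Reeb vector field $R$, and hence carries basic functions to basic functions, so $H \circ \phi^{-1}$ is again basic.

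Next I would turn to the general case. Write $\phi = \lim_n \phi^{(n)}$ uniformly with $\phi^{(n)}$ strictly contact diffeomorphisms, and let $H_i$ be a uniformly Cauchy sequence of smooth basic functions with $\{\phi^t_{H_i}\} \to \{\phi_t\}$ uniformly and $H_i \to H$ uniformly. The conjugates $\{\phi^{(n)} \circ \phi^t_{H_i} \circ (\phi^{(n)})^{-1}\}$ are then smooth strictly contact isotopies with basic contact Hamiltonians $H_i \circ (\phi^{(n)})^{-1}$. Since $M$ is compact we have $(\phi^{(n)})^{-1} \to \phi^{-1}$ uniformly, and since $H$ is uniformly continuous a standard diagonal argument produces indices $n_i \to \infty$ for which $H_i \circ (\phi^{(n_i)})^{-1} \to H \circ \phi^{-1}$ uniformly and the corresponding isotopies converge uniformly to $\{\psi_t\}$. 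By uniqueness of the continuous contact Hamiltonian from topological contact dynamics~\cite{banyaga:ugh11}, this exhibits $\{\psi_t\}$ as a continuous strictly contact isotopy whose continuous contact Hamiltonian is exactly $H \circ \phi^{-1}$, so that Definition~\ref{def:helicity} computes $\Hel(\{\psi_t\})$ from $c(H \circ \phi^{-1})$ and $c((H \circ \phi^{-1})^2)$.

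Finally I would observe that each $\phi^{(n)}$ preserves the canonical volume form $\alpha \wedge d\alpha$, and that a uniform limit of measure preserving homeomorphisms of a compact space preserves the induced measure; hence $\phi$ is volume preserving. Consequently $c(H \circ \phi^{-1}) = c(H)$ and $c((H \circ \phi^{-1})^2) = c(H^2 \circ \phi^{-1}) = c(H^2)$, and the helicity formula of Definition~\ref{def:helicity} yields $\Hel(\{\psi_t\}) = \Hel(\{\phi_t\})$. The main obstacle is precisely the general continuous case: verifying that the conjugate $\{\psi_t\}$ is genuinely a continuous strictly contact isotopy and that its continuous contact Hamiltonian equals $H \circ \phi^{-1}$. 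This is where the diagonal approximation together with the uniqueness theorem for continuous contact Hamiltonians is indispensable, and where one must control the inverses $(\phi^{(n)})^{-1}$ and the approximating isotopies $\{\phi^t_{H_i}\}$ simultaneously.
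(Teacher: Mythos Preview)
Your proposal is correct and follows essentially the same strategy as the paper: identify the continuous contact Hamiltonian of the conjugated isotopy as $H\circ\phi^{-1}$ (the paper writes $H\circ\phi$, working with the inverse conjugation), then use that $\phi$ preserves the measure induced by $\alpha\wedge d\alpha$ so that $c(H)$ and $c(H^2)$ are unchanged, and conclude via the helicity formula of Definition~\ref{def:helicity}. The only cosmetic difference is that the paper invokes the extended transformation law for continuous strictly contact isotopies directly from~\cite{banyaga:ugh11} rather than spelling out the diagonal approximation argument you give; your more explicit treatment of this step is equivalent to what that reference establishes.
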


Regarding suspensions of continuous surface isotopies, Theorems~\ref{thm:suspension} and \ref{thm:double-suspension} imply the following corollary.

\begin{cor}\label{cor:helicity-twice-calabi}
Suppose $H_i$ is a uniformly Cauchy sequence of smooth Hamiltonian functions on the disk $D^2$, and the corresponding Hamiltonian isotopies $\{ \phi_{H_i}^t \}$ converge uniformly to a continuous isotopy $\{ \phi_t \}$.
Then the helicities $\Hel (\tau_* X (\{ \phi_{H_i}^t \}))$ with respect to the standard volume form $dV$ on $\R^3$ of the suspensions $\tau_* X (\{ \phi_{H_i}^t \})$ converge to twice the Calabi invariant of $\{ \phi_t \}$ with respect to the area form $\omega$.

Moreover, if $F_i$ is another uniformly Cauchy sequence of smooth Hamiltonian functions, and the corresponding Hamiltonian isotopies $\{ \phi_{F_i}^t \}$ converge uniformly to a continuous isotopy $\{ \psi_t \}$, then the helicities $\Hel (X (\{ \phi_{H_i}^t \}, \{ \phi_{F_i}^t \}))$ with respect to the standard volume form $dV$ on $S^3$ of the double suspensions $X (\{ \phi_{H_i}^t \}, \{ \phi_{F_i}^t \})$ converge to the number $2 \pi^2 \cdot ( \Cal (\{ \phi_t \}) + \Cal (\{ \psi_t \}) + 2 \pi^2 )$.
\end{cor}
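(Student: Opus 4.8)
The plan is to reduce both assertions to the closed-form evaluations already established in Theorems~\ref{thm:suspension} and \ref{thm:double-suspension}, and then to pass to the limit $i \to \infty$. The point is that I never form the suspension of the limiting continuous isotopy; I only take the limit of the real numbers $\Hel(\tau_* X(\{\phi_{H_i}^t\}))$, so no new notion of helicity for continuous suspensions is needed. The sole analytic content is the convergence of the Calabi invariants $\Cal(\phi_{H_i})$, which I isolate as the crux.

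For the first assertion, each suspension $\tau_* X(\{\phi_{H_i}^t\})$ is a genuine smooth vector field, so Theorem~\ref{thm:suspension} applies verbatim and gives $\Hel(\tau_* X(\{\phi_{H_i}^t\})) = 2\Cal(\phi_{H_i})$ for every $i$. By the integral formula \eqref{eqn:calabi}, and because $\Cal$ depends only on the time-one map, I may compute $\Cal(\phi_{H_i}) = \int_0^1 \int_{D^2} (H_i)_t\,\omega\,dt$ using the normalized generating functions (replacing $H_i$ by its spatial normalization, which is again uniformly Cauchy, affects neither side). Since the $H_i$ converge uniformly on the compact set $[0,1]\times D^2$ to a continuous function $H$, uniform convergence lets me pass the limit through the finite-measure integral, so that
\[ \Cal(\phi_{H_i}) = \int_0^1 \int_{D^2} (H_i)_t\,\omega\,dt \longrightarrow \int_0^1 \int_{D^2} H_t\,\omega\,dt. \]
By the theory of continuous Hamiltonian dynamics and the uniqueness of topological Hamiltonians \cite{mueller:ghh07, mueller:ghl08, viterbo:ugh06, buhovsky:ugh11}, the limit $H$ is the well-defined continuous Hamiltonian generating $\{\phi_t\}$, and the right-hand side is precisely $\Cal(\{\phi_t\})$. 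Combining these relations yields $\Hel(\tau_* X(\{\phi_{H_i}^t\})) \to 2\Cal(\{\phi_t\})$.

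For the second assertion I argue identically, using Theorem~\ref{thm:double-suspension} in place of Theorem~\ref{thm:suspension}. For each $i$ one has
\[ \Hel(X(\{\phi_{H_i}^t\},\{\phi_{F_i}^t\})) = 2\pi^2\bigl(\Cal(\phi_{H_i}) + \Cal(\phi_{F_i}) + 2\pi^2\bigr). \]
Applying the convergence $\Cal(\phi_{H_i}) \to \Cal(\{\phi_t\})$ established above, together with the analogous statement $\Cal(\phi_{F_i}) \to \Cal(\{\psi_t\})$ obtained by repeating the argument for the sequence $F_i$ and its limit $\{\psi_t\}$, and using that $2\pi^2$ is a fixed constant, I conclude
\[ \Hel(X(\{\phi_{H_i}^t\},\{\phi_{F_i}^t\})) \longrightarrow 2\pi^2\bigl(\Cal(\{\phi_t\}) + \Cal(\{\psi_t\}) + 2\pi^2\bigr), \]
as claimed.

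The main obstacle is the identification $\int_0^1\int_{D^2} H_t\,\omega\,dt = \Cal(\{\phi_t\})$, i.e.\ verifying that the uniform limit $H$ of the normalized generating functions really is the continuous Hamiltonian attached to $\{\phi_t\}$ and is independent of the approximating sequence. The convergence of the integrals themselves is elementary given uniform convergence on a compact domain; what genuinely requires the machinery of topological Hamiltonian dynamics is the well-definedness of $\Cal(\{\phi_t\})$ as an invariant of the continuous isotopy, so that the limit does not depend on the chosen approximations $H_i$ (respectively $F_i$). Once that uniqueness is in hand, both statements follow by taking limits in the exact formulas.
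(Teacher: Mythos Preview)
Your proposal is correct and follows essentially the same approach as the paper: apply Theorems~\ref{thm:suspension} and~\ref{thm:double-suspension} to each term of the sequence, then use uniform convergence of the Hamiltonians together with the uniqueness of continuous Hamiltonians to conclude $\Cal(\phi_{H_i}) \to \Cal(\{\phi_t\})$ (and likewise for $F_i$). The paper's justification, given in Section~\ref{sec:topo-ham}, is precisely this argument stated more tersely; your parenthetical remark about renormalizing $H_i$ is unnecessary here since on $(D^2,\partial D^2,\omega)$ the Hamiltonians are already normalized by having compact support in the interior.
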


\begin{dfn}\label{def:helicity-twice-calabi}
The \emph{helicity} of the \emph{suspension of a continuous Hamiltonian isotopy} $\{ \phi_t \}$ of $(D^2,\partial D^2,\omega)$ is by definition equal to twice the Calabi invariant of $\{ \phi_t \}$.
The \emph{helicity} of the \emph{double suspension of two continuous Hamiltonian isotopies} $\{ \phi_t \}$ and $\{ \psi_t \}$ of $(D^2,\partial D^2,\omega)$ is by definition the number $2 \pi^2 \cdot ( \Cal (\{ \phi_t \}) + \Cal (\{ \psi_t \}) + 2 \pi^2 )$.
\end{dfn}

See Section~\ref{sec:topo-ham} for the definition of continuous Hamiltonian isotopies and their Calabi invariant.

\begin{theo}\label{theo:suspension-conjugation}
Suppose the suspensions of two smooth or continuous Hamiltonian isotopies $\{ \phi_t \}$ and $\{ \psi_t \}$ of $(D^2,\partial D^2,\omega)$ are conjugated by a homeomorphism of the form $(x,t) \mapsto (\varphi (x),t)$, where $\varphi$ is an area preserving homeomorphism of $D^2$ that is the identity near the boundary of the disk.
Then their helicities necessarily coincide.
The same holds for topologically conjugate double suspensions, provided the conjugating homeomorphism of $S^3$ is of the above product form on the two solid tori that in Hopf coordinates are given by $\{ \eta \le \pi / 4 \}$ and $\{ \eta \ge \pi / 4 \}$.
\end{theo}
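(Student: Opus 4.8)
The plan is to reduce the assertion about helicities to the conjugation invariance of the Calabi invariant, and to establish the latter by smoothing the conjugating homeomorphism. First I would make explicit how a conjugation of suspensions descends to a conjugation of isotopies. Writing the suspension of $\{\phi_t\}$ as the flow of $X+\frac{\partial}{\partial t}$ on $D^2\times\R$, where $X$ generates $\{\phi_t\}$, a short computation identifies its time-$s$ map as $(x,t_0)\mapsto(\phi_{t_0+s}\circ\phi_{t_0}^{-1}(x),t_0+s)$. Conjugating by $(x,t)\mapsto(\varphi(x),t)$ and evaluating the resulting time-$s$ map at $t_0=0$ yields $\psi_s=\varphi\circ\phi_s\circ\varphi^{-1}$ for every $s$, so the hypothesis is equivalent to $\{\psi_t\}=\{\varphi\circ\phi_t\circ\varphi^{-1}\}$. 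Since by Definition~\ref{def:helicity-twice-calabi} the helicity of a suspension is twice the Calabi invariant of its isotopy, it suffices to prove $\Cal(\{\phi_t\})=\Cal(\{\psi_t\})$.

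The core computation is visible already in the smooth model. If $\varphi$ were a symplectic diffeomorphism and $\{\phi_t\}$ smooth with boundary-vanishing Hamiltonian $F$, then $\{\varphi\circ\phi_t\circ\varphi^{-1}\}$ is generated by $F_t\circ\varphi^{-1}$; this function again vanishes near $\partial D^2$ because $\varphi$ is the identity there, and because $\varphi$ preserves $\omega$ the change of variables gives $\int_{D^2}F_t\circ\varphi^{-1}\,\omega=\int_{D^2}F_t\,\omega$. Integrating in $t$ shows $\Cal$ is unchanged. The essential observation is that only the area preservation of $\varphi$ is used, not its smoothness.

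To make this rigorous for a genuine area preserving homeomorphism $\varphi$ (and for isotopies that are merely continuous), I would approximate on both sides. Pick smooth boundary-vanishing Hamiltonians $H_i$ with $\{\phi^t_{H_i}\}\to\{\phi_t\}$ uniformly and $H_i\to F$ uniformly, so that $\Cal(\{\phi_t\})=\lim_i\int_0^1\int_{D^2}(H_i)_t\,\omega\,dt$, and pick smooth area preserving diffeomorphisms $\varphi_j$, equal to the identity near $\partial D^2$, with $\varphi_j\to\varphi$ and $\varphi_j^{-1}\to\varphi^{-1}$ uniformly. For each pair the conjugate $\{\varphi_j\circ\phi^t_{H_i}\circ\varphi_j^{-1}\}$ is a smooth Hamiltonian isotopy with Hamiltonian $(H_i)_t\circ\varphi_j^{-1}$, and its Calabi invariant equals $\int_0^1\int_{D^2}(H_i)_t\,\omega\,dt$ by area preservation of $\varphi_j$, hence is independent of $j$. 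Along a suitable diagonal $j=j(i)\to\infty$ these smooth isotopies converge uniformly to $\{\varphi\circ\phi_t\circ\varphi^{-1}\}=\{\psi_t\}$, while their Hamiltonians $(H_i)_t\circ\varphi_{j(i)}^{-1}$ converge uniformly to $F\circ\varphi^{-1}$: the difference splits as $((H_i)_t-F_t)\circ\varphi_{j(i)}^{-1}$, whose supremum equals that of $(H_i)_t-F_t$, plus $F_t\circ\varphi_{j(i)}^{-1}-F_t\circ\varphi^{-1}$, which is small by uniform continuity of $F_t$. Uniform convergence of the Hamiltonians forces the Hofer norms of their differences to zero as well, so by the uniqueness theorem for continuous Hamiltonians \cite{viterbo:ugh06, buhovsky:ugh11} this diagonal sequence exhibits $\{\psi_t\}$ as a continuous Hamiltonian isotopy and computes $\Cal(\{\psi_t\})=\lim_i\Cal(\{\phi^t_{H_i}\})=\Cal(\{\phi_t\})$.

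The step I expect to be the main obstacle is the smoothing of $\varphi$, namely the production of smooth area preserving diffeomorphisms that are the identity near the boundary and converge, together with their inverses, uniformly to the given area preserving homeomorphism. In dimension two such approximations are available, but this is the sole non-formal ingredient, and it is precisely what forces a diagonal rather than two separate limits, since one must control the convergence of the conjugated flows and of the conjugated Hamiltonians simultaneously. Finally, for the double suspension I would run the same argument on each of the two solid tori $\{\eta\le\pi/4\}$ and $\{\eta\ge\pi/4\}$ in Hopf coordinates: the assumption that the conjugating homeomorphism of $S^3$ restricts to a product $(x,t)\mapsto(\varphi_i(x),t)$ on each torus gives $\{\psi^t_i\}=\{\varphi_i\circ\phi^t_i\circ\varphi_i^{-1}\}$ and hence $\Cal(\{\phi^t_i\})=\Cal(\{\psi^t_i\})$ for $i=1,2$, so that the value $2\pi^2\,(\Cal(\{\phi^t_1\})+\Cal(\{\phi^t_2\})+2\pi^2)$ from Definition~\ref{def:helicity-twice-calabi} is unchanged.
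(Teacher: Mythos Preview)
Your proposal is correct and follows essentially the same route as the paper: reduce the conjugation of suspensions to the relation $\{\psi_t\}=\{\varphi\circ\phi_t\circ\varphi^{-1}\}$, and then invoke conjugation invariance of the Calabi invariant under area preserving homeomorphisms. The only difference is one of packaging: the paper cites the transformation law $F\mapsto F\circ\varphi$ for continuous Hamiltonian isotopies under symplectic homeomorphisms as an established result (together with the smoothing fact that every area preserving homeomorphism of the disk is a symplectic homeomorphism), whereas you reprove this law by an explicit diagonal approximation argument.
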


As a motivation for studying the helicity, we mention the following interesting problem in hydrodynamics, and refer to \cite{arnold:ahi86, arnold:tmh98} for details.
The mathematical model for fluid dynamics is the hydrodynamics of an incompressible inviscid homogeneous fluid filling $M$, or in other words, the (volume preserving) flow of a divergence-free vector field $X$ on $M$.
Let $g$ be some auxiliary Riemannian metric, and define the (magnetic) energy of $X$ with respect to $g$ by $E (X) = \int_M g (X,X)$.
The group $\Diff (M,\mu)$ of volume preserving diffeomorphisms acts on the Lie algebra of divergence-free vector fields by $X \mapsto \phi_* X$.
Consider the problem of minimizing the functional $E$ on the (adjoint) orbit $\left\{ \phi_* X \mid \phi \in \Diff (M,\mu) \right\}$ of a fixed vector field $X$.
For general $X$ there need not be a minimizing (smooth) vector field.
If there is not, can the energy be made arbitrarily small?
For generic $X$, the answer is no.
Arnold~\cite{arnold:ahi86} showed that
\begin{equation} \label{eqn:energy-helicity-ineq}
	E (\phi_* X) \ge C \cdot | \Hel (X) |,
\end{equation}
where $C$ is some positive constant that depends on the metric $g$.
The helicity is invariant under the action of volume preserving diffeomorphisms, and independent of the metric $g$.
For generic $X$, the helicity does not vanish, and the above inequality gives a lower bound for the magnetic energy on the orbit of $X$.
Arnold also proved that the critical points of $E$ restricted to a fixed orbit are precisely those divergence-free vector fields that commute with their curl, including in particular Beltrami fields, i.e.\ eigenfields of the curl operator.
The Hopf field on the three-sphere is an example, cf.\ Section~\ref{sec:hopf-bundle}.
Beltrami fields with respect to some Riemannian metric are Reeb vector fields of some contact form, and vice versa \cite{etnyre:cth00}.
We will review contact geometry in Section~\ref{sec:contact}.
Similar problems in hydrodynamics are discussed in the book by Arnold and Khesin.

Regarding Theorem~\ref{thm:helicity}, we point out that strictly contact vector fields are generalizations of the aforementioned Reeb vector fields, and appear quite naturally in the present context.
A vector field is strictly contact if and only if it is divergence-free and contact, and strictly contact vector fields are those vector fields that commute with the Reeb vector field.
If the contact form is regular, that is, the Reeb vector field induces a free $S^1$-action on $M$, then strictly contact vector fields are precisely the lifts of Hamiltonian vector fields on the quotient of $M$ by the Reeb flow.
Similarly in Theorem~\ref{thm:suspension} and Theorem~\ref{thm:double-suspension}, we consider lifts of isotopies of the disk to the solid two-torus or the three-sphere.

In the first part of the paper, our methods are elementary, and use the calculus of differential forms and the geometry of (regular) contact and symplectic manifolds.
Section~\ref{sec:helicity} reviews the definition of helicity and establishes its most important basic properties.
In Section~\ref{sec:contact} we review the contact geometry of (regular) contact manifolds, and in Section~\ref{sec:helicity-contact} the proof of Theorem~\ref{thm:helicity} is given.
Section~\ref{sec:hopf-bundle} discusses the case of the three-sphere which is of greatest interest.
In Section~\ref{sec:homotopy} homotopies rel end points are considered, and Section~\ref{sec:suspensions} is concerned with suspensions of surface isotopies.

The second part of the paper comprises Section~\ref{sec:topo-contact} and Section~\ref{sec:topo-ham}.
We use tools from topological Hamiltonian and contact dynamics \cite{mueller:ghh07, mueller:ghl08, mueller:ghc08, viterbo:ugh06, buhovsky:ugh11, banyaga:ugh11, mueller:gcd11} to address Arnold's questions.

In the last part of the paper, Sections~\ref{sec:conjugate-diffeomorphisms} and~\ref{sec:conjugate-systems} take up the question of the existence of diffeomorphisms and vector fields that are topologically conjugate but not $C^1$-conjugate.
The proofs use the uniqueness theorems and the transformation laws of topological Hamiltonian and contact dynamics.
Section~\ref{sec:higher-dim} is devoted to higher-dimensional helicities.

In the two appendices, we prove a proposition from Section~\ref{sec:homotopy}, and compute the helicity of strictly contact vector fields on the three-torus.

\section{Helicity of divergence-free vector fields} \label{sec:helicity}
Let $M$ be a closed smooth three-manifold equipped with a volume form $\mu$.
For the time being, assume $H^2 (M) = 0$.
By Cartan's formula, if a smooth vector field $X$ on $M$ is divergence-free, i.e.\ the Lie derivative satisfies $\mathcal L_X \mu = 0$, then the two-form $\iota_X \mu$ is closed, where $\iota$ denotes interior multiplication of a differential form by a vector field.
By our hypothesis, there exists a one-form $\beta = \beta_X$ with $d\beta_X = \iota_X \mu$, called a \emph{primitive} of the two-form $\iota_X \mu$.
The helicity of $X$ is defined to be the real number
\begin{equation} \label{eqn:def-helicity}
	\Hel (X) = \int_M \beta_X \wedge d\beta_X = \int_M \beta_X (X) \cdot \mu.
\end{equation}
This definition does not depend on the choice of primitive $\beta$ of $\iota_X \mu$.
Indeed, suppose $\beta'$ is another one-form satisfying $d\beta' = \iota_X \mu = d\beta$.
Then $\beta - \beta'$ is closed, and we have
	\[ \int_M \beta \wedge d\beta - \int_M \beta' \wedge d\beta' = \int_M (\beta - \beta') \wedge d\beta = \int_M d (\beta \wedge (\beta - \beta')) = 0 \]
by Stokes' theorem.
For example, one can chose $\beta_X = G \delta (\iota_X \mu)$ using the Hodge decomposition with respect to some auxiliary Riemannian metric.
The second equality in (\ref{eqn:def-helicity}) follows from the fact that interior multiplication is an anti-derivation, and $\beta \wedge \mu$ vanishes for dimension reasons.
For later reference, we formalize this argument in the following obvious lemma.

\begin{lemma} \label{lem:differential}
Given a $p$-form $\sigma$ and a $q$-form $\tau$ on a smooth manifold $M$, the $(p + q + 1)$-forms $d \sigma \wedge \tau$ and $\sigma \wedge d\tau$ coincide up to sign and an exact form.
More precisely, $[ d \sigma \wedge \tau ] = (-1)^{p + 1} [ \sigma \wedge d\tau ]$.
In particular, if $M$ is closed, and $p + q = \dim M - 1$, then
	\[ \int_M d \sigma \wedge \tau = (-1)^{p + 1} \int_M \sigma \wedge d\tau. \]
If $p + q > \dim M$, then $\iota_X \sigma \wedge \tau = (-1)^{p + 1} \sigma \wedge \iota_X \tau$ for any vector field $X$.
\end{lemma}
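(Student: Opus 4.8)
The plan is to derive all three assertions from the two fundamental graded product rules of the exterior algebra: the Leibniz rule for the exterior derivative and the anti-derivation property of the interior product. First I would establish the identity of forms underlying the ``coincide up to sign and an exact form'' claim. The graded Leibniz rule reads $d(\sigma \wedge \tau) = d\sigma \wedge \tau + (-1)^p \sigma \wedge d\tau$, and rearranging it gives
\[ d\sigma \wedge \tau = (-1)^{p+1}\, \sigma \wedge d\tau + d(\sigma \wedge \tau). \]
Since $d(\sigma \wedge \tau)$ is manifestly exact, this exhibits $d\sigma \wedge \tau$ and $(-1)^{p+1}\sigma \wedge d\tau$ as differing by an exact form, which is exactly the assertion $[d\sigma \wedge \tau] = (-1)^{p+1}[\sigma \wedge d\tau]$, the bracket being read as the class modulo exact forms. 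I would stress that no closedness hypothesis on $\sigma$ or $\tau$ is required.

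For the integral formula I would simply integrate the displayed identity over $M$. The degree condition $p + q = \dim M - 1$ guarantees that $d\sigma \wedge \tau$, and hence every term, is a form of top degree $p + q + 1 = \dim M$, so that the integrals are defined. Because $M$ is closed, Stokes' theorem annihilates the exact term, $\int_M d(\sigma \wedge \tau) = 0$, and what remains is precisely $\int_M d\sigma \wedge \tau = (-1)^{p+1}\int_M \sigma \wedge d\tau$.

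The third assertion is the purely algebraic analogue for interior multiplication, and I would prove it by invoking the anti-derivation property $\iota_X(\sigma \wedge \tau) = \iota_X \sigma \wedge \tau + (-1)^p \sigma \wedge \iota_X \tau$. The hypothesis $p + q > \dim M$ forces the product $\sigma \wedge \tau$ to vanish identically for dimension reasons, so the left-hand side is zero; solving for one summand then yields $\iota_X \sigma \wedge \tau = (-1)^{p+1}\, \sigma \wedge \iota_X \tau$, since $-(-1)^p = (-1)^{p+1}$.

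There is no genuine obstacle here, and the authors are right to call the lemma obvious. The only points demanding any care are the sign bookkeeping, where one must track the graded factor $(-1)^p$ from each product rule and rewrite $-(-1)^p$ as $(-1)^{p+1}$, and the correct reading of the bracket as equivalence modulo exact forms rather than as a de Rham cohomology class, since the forms in question need not be closed.
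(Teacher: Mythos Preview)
Your proof is correct and matches the paper's intended argument. The paper does not spell out a proof of this lemma, calling it ``obvious'' and indicating just beforehand that it follows from the anti-derivation property of interior multiplication together with vanishing for dimension reasons; your derivation from the graded Leibniz rule for $d$ and the anti-derivation rule for $\iota_X$ is precisely this.
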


If $H^2 (M)$ is nonzero, the helicity invariant is defined on the Lie subalgebra of divergence-free vector fields $X$ such that $\iota_X \mu$ is exact.
Such vector fields are sometimes called \emph{exact} in the literature.
There is a homomorphism on the Lie algebra of divergence-free vector fields into the $(\dim M - 1)^{\text{st}}$ cohomology group of $M$, defined by $X \mapsto [\iota_X \mu]$ (the \emph{flux} of $X$), and its kernel consists precisely of the exact vector fields.
We refer to \cite{banyaga:scd97} for more on this important homomorphism.
It is shown in \cite{mueller:fsc11} that if the volume form is induced by a regular contact form, this kernel contains all (divergence-free) contact vector fields.

The helicity is a quadratic form on the space of exact divergence-free vector fields.
For $X$ and $Y$ exact, define the \emph{relative helicity}
	\[ \Rel (X,Y) = \int_M \beta_X \wedge d\beta_Y = \int_M \beta_Y \wedge d\beta_X \]
independently of the choices of $\beta_X$ and $\beta_Y$ by Lemma~\ref{lem:differential}.
$\Rel$ is symmetric and $\R$-bilinear, and we have the obvious identities $\Hel (X) = \Rel (X, X)$ and 
\begin{equation}\label{eqn:sum}
\Hel (X \pm Y) = \Hel (X) \pm 2 \Rel (X, Y) + \Hel (Y).
\end{equation}
In particular,
	\[ \left. \frac{d}{d\epsilon} \right |_{\epsilon = 0} \Hel (X + \epsilon Y) = 2 \Rel (X,Y), \]
or $d\Hel (X) = 2 \Rel (X,\cdot)$, and for any nonzero $X$ there exists an exact divergence-free vector field $Y$ such that $\Rel (X,Y)$ is nonzero.
Thus the helicity of a $C^1$-generic (exact divergence-free) vector field does not vanish.

If we want to emphasize the dependence on the volume form $\mu$, we write $\Hel (X;\mu)$, and denote the bilinear form by $\Rel (X,Y;\mu)$.
However, the definitions depend on the choice of volume form on $M$ only up to scaling and a volume preserving change of coordinates.
Recall that by Moser's argument, two volume forms $\mu$ and $\nu$ on $M$ are isotopic if and only if the total volumes of $M$ with respect to $\mu$ and $\nu$ coincide.
Thus up to scaling by a nonzero constant $c$, $\mu$ is isotopic to $\nu$.
That means there exists a diffeomorphism $\phi$ (which is isotopic to the identity) such that $\phi^* \nu = c \mu$.

\begin{lemma} \label{lem:change-vol-form}
If $\mu$ is a volume form on $M$, $\phi$ an orientation preserving diffeomorphism, and $X$ an exact divergence-free vector field with respect to the volume form $\phi^* \mu$, then $\phi_* X$ is exact divergence-free with respect to $\mu$, and $\Hel (\phi_* X;\mu) = \Hel (X;\phi^* \mu)$.
If $c$ is a nonzero constant, then we have $\Hel (X;c \mu) = c^2 \Hel (X;\mu)$.
More generally, if $f$ is a nowhere vanishing smooth function on $M$, and $X$ an exact divergence-free vector field with respect to the volume form $f \mu$, then the vector field $f X$ is exact divergence-free with respect to $\mu$, and the identity $\Hel (X;f \mu) = \Hel (f X;\mu)$ holds.
Analogous statements hold for the relative helicity $\Rel (X,Y)$.
In particular, both $\Hel$ and $\Rel$ are invariant under the action of volume preserving diffeomorphisms on (exact) divergence-free vector fields.
\end{lemma}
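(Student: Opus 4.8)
The plan is to reduce every assertion to two elementary facts: the naturality of the interior product under a diffeomorphism, namely $\phi^* (\iota_{\phi_* X} \sigma) = \iota_X (\phi^* \sigma)$ for any form $\sigma$, and the pointwise identity $\iota_{fX} \sigma = f \, \iota_X \sigma$ for a smooth function $f$. In each case the strategy is the same: exhibit an explicit primitive for the relevant two-form, and then track how the integrand $\beta \wedge d\beta$ transforms.

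For the first statement, I would start from a primitive $\beta_X$ with $\iota_X (\phi^* \mu) = d\beta_X$. Applying naturality with $\sigma = \mu$ gives $\phi^* (\iota_{\phi_* X} \mu) = \iota_X (\phi^* \mu) = d\beta_X$, and pulling back by $\phi^{-1}$ yields $\iota_{\phi_* X} \mu = d(\phi_* \beta_X)$, where $\phi_* = (\phi^{-1})^*$. Hence $\phi_* X$ is exact divergence-free with respect to $\mu$, with primitive $\phi_* \beta_X$. Then
\[ \Hel(\phi_* X;\mu) = \int_M (\phi_* \beta_X) \wedge d(\phi_* \beta_X) = \int_M \phi_* (\beta_X \wedge d\beta_X), \]
and since $\phi$ (equivalently $\phi^{-1}$) is orientation preserving, the change of variables formula $\int_M (\phi^{-1})^* \eta = \int_M \eta$ identifies the right-hand side with $\int_M \beta_X \wedge d\beta_X = \Hel(X;\phi^* \mu)$.

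For the scaling statements I would argue directly at the level of primitives. If $\iota_X \mu = d\beta_X$, then $\iota_X (c\mu) = d(c\beta_X)$, so the primitive scales by $c$ and the integrand $\beta \wedge d\beta$ by $c^2$, giving $\Hel(X;c\mu) = c^2 \Hel(X;\mu)$. For a nowhere vanishing $f$, the key observation is $\iota_{fX} \mu = f \, \iota_X \mu = \iota_X (f\mu)$; thus if $\iota_X (f\mu) = d\beta_X$ is exact, then $fX$ is exact divergence-free with respect to $\mu$ with the \emph{same} primitive $\beta_X$, and consequently $\Hel(fX;\mu) = \int_M \beta_X \wedge d\beta_X = \Hel(X;f\mu)$. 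The relative versions follow verbatim by replacing $\beta_X \wedge d\beta_X$ with $\beta_X \wedge d\beta_Y$ and invoking the $\R$-bilinearity of $\Rel$. Finally, a volume preserving diffeomorphism satisfies $\phi^* \mu = \mu$ and is in particular orientation preserving, so the first statement specializes to $\Hel(\phi_* X;\mu) = \Hel(X;\mu)$, and likewise for $\Rel$.

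I do not expect a genuine obstacle here; the content is entirely formal. The only points requiring care are the correct placement of pushforward versus pullback in the naturality identity (so that the primitive of $\phi_* X$ comes out as $\phi_* \beta_X$ rather than its inverse), and the hypothesis that $\phi$ be orientation preserving, which is exactly what is needed for the change of variables to introduce no sign in the final integral.
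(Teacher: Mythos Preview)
Your argument is correct and matches the paper's proof essentially verbatim: the paper invokes the same naturality identity $\phi^*(\iota_{\phi_* X}\mu) = \iota_X(\phi^*\mu)$, deduces that $(\phi^{-1})^*\beta_X$ is a primitive of $\iota_{\phi_* X}\mu$, and then appeals to the change of variables formula, dismissing the remaining identities as ``proved similarly.'' Your treatment of the scaling and $f$-rescaling cases is in fact more explicit than the paper's.
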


We note that the flow of $\phi_* X$ is the conjugation $\phi \circ \phi_X^t \circ \phi^{-1}$ of the flow $\phi_X^t$ of $X$ by $\phi$, and the flow of $f X$ is related to the flow of $X$ by the formula
	\[ \phi_{f X}^t (x) = \phi_X^{\tau (t,x)} (x). \]
Here the smooth function $\tau \colon \R \times M \to \R$ solves the following ordinary differential equation with initial condition $\tau (0,x) = 0$ for all $x \in M$:
	\[ \frac{d}{dt} \tau (t,x) = f (\phi_X^{\tau (t,x)}). \]

\begin{proof}
It is straightforward to check the well-known identity
\begin{equation} \label{eqn:change-vol-form}
	\phi^* (\iota_{\phi_* X} \mu) = \iota_X (\phi^* \mu).
\end{equation}
Therefore if $\beta_X$ is a primitive of $\iota_X \phi^* \mu$, then $(\phi^{-1})^* \beta_X$ is a primitive of $\iota_{\phi_* X} \mu$.
The first claim now follows from the change of variables formula.
The other identities are proved similarly.
\end{proof}

We may consider the helicity as an invariant of the volume preserving isotopy $\{ \phi_X^t \}_{0 \le t \le 1}$ generated by the vector field $X$, i.e.\ ${d / dt} \, \phi_X^t = X \circ \phi_X^t$, and $\phi_X^0$ is the identity.
There is also a flux homomorphism defined for volume preserving isotopies \cite{banyaga:scd97}, and in fact, the flux of $\{ \phi_X^t \}$ by definition equals the flux of its infinitesimal generator $X$.
Thus if $H^2 (M) \not= 0$, the helicity is defined for exact volume preserving isotopies, i.e.\ those in the kernel of the flux map.

By Hodge theory, the helicity depends continuously on the vector field $X$, provided we equip the Lie algebra of divergence-free vector fields with the $C^1$-topology.
As we previously noted, with respect to the $C^0$-topology, the helicity does not depend continuously on the isotopy $\{ \phi_X^t \}$ generated by $X$.

The helicity is also defined for $M$ compact and connected with nonempty boundary, provided $M$ is simply-connected, and $X$ is tangent to the boundary of $M$.
One can also define the helicity for compact connected embedded submanifolds of $\R^3$ with nonempty boundary, if the divergence-free (with respect to the standard volume form $dV$ on $\R^3$) vector field $X$ is again tangent to the boundary.
In the latter case however the helicity does depend on the embedding into $\R^3$, see Section~\ref{sec:suspensions}.
We refer to \cite{arnold:tmh98, gambaudo:ea97} for details.

The definition of $\Hel (X)$ generalizes in an obvious fashion to time-dependent vector fields $\{ X_t \} $.
Suppose the closed two-forms $\iota_{X_t} \mu$ are exact for all $0 \le t \le 1$.
By Hodge theory, after choosing an auxiliary Riemannian metric on $M$, we may choose the primitives $\beta_t$ satisfying $d\beta_t = \iota_{X_t} \mu$ to depend smoothly on $t$.
Then define
	\[ \Hel (\{ X_t \}) = \int_0^1 \int_M \beta_t \wedge d\beta_t \, dt. \]
This number is again well-defined, and coincides with the previous definition if $X$ is autonomous.
We can also define the helicity if only the time average of $\iota_{X_t} \mu$ is exact, but the forms $\iota_{X_t} \mu$ are not necessarily exact for all times.
This definition also extends the definition for autonomous $X$, but the two definitions for time-dependent vector fields may not coincide if $\iota_{X_t} \mu$ happens to be exact for all $t$.
We remark that in the second situation the flow $\{ \phi_X^t\}$ of $X_t$ is isotopic rel end points to an exact isotopy $\{ \phi_Y^t\}$ \cite{banyaga:scd97}.
However, the helicity does in general depend on the homotopy class (rel end points) of the isotopy, see Section~\ref{sec:homotopy}.

The classical Hopf invariant of the homotopy class of a map $p \colon S^3 \to S^2$ is defined as follows.
Choose an area form $\omega$ of total area $1$ on $S^2$, and a primitive $\beta$ of the (closed and hence exact) two-form $p^* \omega$ on $S^3$.
Then define the Hopf invariant as the integral $\int \beta \wedge d\beta$ over $S^3$.
This is an integer which is also equal to the linking number of the preimages under $p$ of two regular points in $S^2$.
By the nondegeneracy of $\mu$, every closed (exact) two-form on an oriented three-manifold can be written $\iota_X \mu$ for some divergence-free (exact) vector field.
The generalized Hopf invariant is defined even if the two-form $\iota_X \mu$ is not the pull-back of a closed form on $S^2$, and can take any real value.
In order to prove Theorem~\ref{thm:helicity}, we will consider the projection $p \colon M \to B$ of the {\em Boothby-Wang} (or {\em prequantization}) bundle over an integral symplectic surface, and relate the form $\iota_X \mu$ to the pull-back of an exact form on the base $B$.
This set-up will be explained in the next two sections, and a similar strategy will be applied in Section~\ref{sec:suspensions} to prove Theorem~\ref{thm:suspension} and Theorem~\ref{thm:double-suspension}.

\section{Regular contact manifolds} \label{sec:contact}
Let $M$ be a closed smooth manifold of dimension ${2 n + 1}$, equipped with a coorientable nowhere integrable field of hyperplanes (a \emph{contact distribution} or \emph{contact structure}) $\xi \subset TM$.
That means we suppose $\xi$ is given (globally) by the kernel $\xi = \ker \alpha$ of a differential one-form $\alpha$, and $\mu = \alpha \wedge (d\alpha)^n$ is a volume form on $M$.
We call $\mu$ the \emph{canonical volume form} induced by the \emph{contact form} $\alpha$.
For readers not familiar with contact (and symplectic) geometry, we recommend the monographs \cite{mcduff:ist98, geiges:ict08}.
For simplicity, we assume throughout this article that $M$ is connected.

A vector field $X$ on $M$ is said to be \emph{contact} (with respect to $\xi$) if $\mathcal L_X \alpha = h_X \alpha$ for a smooth function $h_X$ on $M$, and \emph{strictly contact} (with respect to $\alpha$) if $h_X = 0$.
Hence, $X$ is contact if and only if its flow $\phi_X^t$ preserves the contact structure $\xi$, and strictly contact if and only if its flow preserves the contact form $\alpha$.
Note that the former concept depends only on the contact structure $\xi$, whereas the latter concept depends on the actual choice of contact form $\alpha$.
A vector field on $M$ is divergence-free and contact if and only if it is strictly contact.
For any $f \in C^\infty (M)$, the one-form $e^f \alpha$ defines another contact form giving rise to the same coorientation of $\xi$ and orientation of $M$, and all contact forms representing the cooriented contact structure $\xi$ can be written in this way.

We denote by $R_\alpha$ the \emph{Reeb vector field} of the contact form $\alpha$, i.e.\ the unique smooth vector field defined by the equations $\iota_{R_\alpha} d\alpha = 0$ and $\iota_{R_\alpha} \alpha = 1$, and call its flow the Reeb flow on $(M,\alpha)$.
More generally, given a contact vector field $X$, we call the smooth function $H = \iota_X \alpha$ its {\em contact Hamiltonian}.
Conversely, given a smooth function $H$ on $M$, there is a unique contact vector field $X$ with contact Hamiltonian $H$ and satisfying the equation $\iota_X d\alpha = (R_\alpha . H) \alpha - d H$.
Here we write $X . f = df (X)$ for the derivative of a smooth function $f$ in the direction of a vector field $X$ on $M$.
Indeed, $d\alpha$ restricted to the subbundle $\xi \subset TM$ is nondegenerate (i.e.\ $(\xi,d\alpha|_\xi)$ is a symplectic vector bundle over $M$), so that the two equations together uniquely define the contact vector field $X$.
We write $X_H$ for the contact vector field with contact Hamiltonian $H$, and denote its flow by $\Phi_H = \{ \phi_H^t \}$.
Observe that $h_X = R_\alpha . H$, so that $X$ is strictly contact if and only if $R_\alpha . H = 0$, or equivalently, $H$ is preserved under the flow of $R_\alpha$.
Such functions are called \emph{basic} functions.

The contact form $\alpha$ is said to be {\em regular} if $R_\alpha$ generates a free $S^1$-action on $M$; in particular, all Reeb orbits are closed and of period $1$, and $M$ is the total space of a principle $S^1$-bundle known as the \emph{Boothby-Wang} bundle \cite{boothby:cm58}
\begin{equation} \label{eqn:bw-bundle}
	S^1 \stackrel{i}{\longrightarrow} M \stackrel{p}{\longrightarrow} B
\end{equation}
over a closed and connected integral symplectic manifold $(B,\omega)$, such that $p^* \omega = d\alpha$.
Recall that \emph{symplectic} means that the two-form $\omega$ on $B$ is closed and nondegenerate, i.e.\ its top power $\omega^n$ defines a volume form on $B$, and integral means that the cohomology class $[\omega] \in H^2 (M,\Z)$.
Here $i$ is the $S^1$-action of the Reeb vector field, and $p \colon M \to B \cong M / S^1$ is the projection to the quotient.
The projection induces an (algebra) isomorphism $p^* \colon C^\infty (B) \to C^\infty_b (M)$ between smooth functions on the base $B$, and smooth basic functions on $M$, and a surjective homomorphism $p_* (X_H) = - X_F$ between strictly contact vector fields $X_H$ on $(M,\alpha)$, and Hamiltonian vector fields $X_F$ on $(B,\omega)$ (with kernel generated over $\R$ by $R_\alpha$).
Here $H = p^* F = F \circ p$, and the vector field $X_F$ is uniquely defined by the equation $\iota_{X_F} \omega = dF$.

If $M$ has dimension $3$, then $B = \Sigma_g$ is an oriented closed and connected surface of genus $g$ with integral total area.
The most interesting case is when the genus $g$ is zero.
Then (\ref{eqn:bw-bundle}) is the Hopf bundle $S^1 \to S^3 \to S^2$ (Section~\ref{sec:hopf-bundle}).
We would like to point out that every closed orientable three-manifold admits a contact structure \cite{martinet:fcv71}, but none of its contact forms need be regular.
For example, the three-torus $T^3$ does not admit a regular contact form (and in fact, no torus $T^{2n + 1}$ does) \cite{blair:rgc10}.
This case is discussed separately in Appendix~\ref{sec:torus}.

Banyaga \cite{banyaga:gdp78} has shown that the Boothby-Wang bundle (\ref{eqn:bw-bundle}) gives rise to a short exact sequence
\begin{equation} \label{eqn:s^1-ext}
	1 \longrightarrow S^1 \stackrel{i_*}{\longrightarrow} \Diff_0 (M,\alpha) \stackrel{p_*}{\longrightarrow} \Ham (B,\omega) \longrightarrow 1
\end{equation}
with $S^1$ in the center of $\Diff_0 (M,\alpha)$.
Here $\Diff_0 (M,\alpha)$ denotes the group of \emph{strictly contact diffeomorphisms}, i.e.\ all those diffeomorphisms preserving the contact form $\alpha$ and isotopic to the identity through an isotopy of diffeomorphisms preserving $\alpha$, $\Ham (B,\omega)$ denotes the group of \emph{Hamiltonian diffeomorphism} of $(B,\omega)$, that is, time-one maps of (the isotopies generated by time-dependent) Hamiltonian vector fields, and $i_*$ is again the $S^1$-action of the Reeb vector field.

Note that $\iota_{R_\alpha} (\alpha \wedge d\alpha) = d\alpha$, so that $\Hel (R_\alpha) = \int_M \alpha \wedge d\alpha = \vol (M)$.
Theorem~\ref{thm:helicity} generalizes this computation to all strictly contact vector fields on a regular contact manifold $M$.

\section{Helicity of strictly contact vector fields} \label{sec:helicity-contact}
Let $M$ be a smooth manifold with a volume form $\mu$, and define a (group) homomorphism $c \colon C^0 (M) \to \R$ by
\begin{equation} \label{eqn:average}
	H \mapsto c (H) = c_H = \frac{1}{\vol (M,\mu)} \int_M H \, \mu,
\end{equation}
where $\vol (M,\mu) = \int_M \mu$ is the total volume of $M$ with respect to $\mu$.
The next lemma shows that if $S^1 \to M \to B$ is the Boothby-Wang bundle (\ref{eqn:bw-bundle}) over an integral symplectic manifold $(B^{2n},\omega)$, then the projection $p$ preserves the homomorphism $c$.
In other words, $c_B = c_M \circ p^*$.
Here $c_M$ denotes the average value (\ref{eqn:average}) with respect to the canonical volume form $\alpha \wedge (d\alpha)^n$ on the total space $M$, and similarly $c_B$ denotes the average value (\ref{eqn:average}) with respect to the canonical volume form $\omega^n$ on the base $B$.

\begin{prop} \label{pro:average}
Let $(M^{2n + 1},\alpha)$ be a regular contact manifold, and write $H = p^* F = F \circ p$ for $F \in C^0 (B)$, where $p$ is the projection map of the Boothby-Wang bundle (\ref{eqn:bw-bundle}).
Then
	\[ \int_M H \, \alpha \wedge (d\alpha)^n = \int_B F \, \omega^n. \]
In particular, we have $c_M (H) = c_B (F)$ with respect to the canonical volume forms $\alpha \wedge (d\alpha)^n$ and $\omega^n$.
\end{prop}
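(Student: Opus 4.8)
The plan is to reduce the identity to integration over the circle fibers of the Boothby--Wang bundle. Since $d\alpha = p^*\omega$ we have $(d\alpha)^n = p^*(\omega^n)$, and because $H = p^*F$ is pulled back from $B$ the integrand becomes
\[ H\,\alpha\wedge(d\alpha)^n = \alpha\wedge p^*\bigl(F\,\omega^n\bigr). \]
Thus everything will follow once I show that for any top-degree form $\gamma$ on $B$ one has $\int_M \alpha\wedge p^*\gamma = \int_B \gamma$; applying this with $\gamma = F\,\omega^n$ yields the proposition.

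First I would record that $\alpha$ is a connection one-form for the principal $S^1$-bundle (\ref{eqn:bw-bundle}): by definition $\iota_{R_\alpha}\alpha = 1$, and $\mathcal L_{R_\alpha}\alpha = \iota_{R_\alpha}d\alpha + d\,\iota_{R_\alpha}\alpha = 0$, so $\alpha$ is invariant under the Reeb $S^1$-action and evaluates to $1$ on its infinitesimal generator. Over a trivializing chart $U\subset B$ with fiber coordinate $\theta\in\R/\Z$ (chosen so that $R_\alpha = \partial/\partial\theta$), this lets me write $\alpha = d\theta + p^*\lambda_U$ for a local one-form $\lambda_U$ on $U$. Consequently
\[ \alpha\wedge p^*\gamma = d\theta\wedge p^*\gamma + p^*(\lambda_U\wedge\gamma) = d\theta\wedge p^*\gamma, \]
because $\lambda_U\wedge\gamma$ is a $(2n+1)$-form on the $2n$-dimensional base and hence vanishes.

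Next I would choose a partition of unity subordinate to a trivializing cover and apply Fubini on each piece $U\times(\R/\Z)$. On each fiber the integral of $d\theta$ over $\R/\Z$ equals the common period $1$ of the regular Reeb orbits, so integrating out $\theta$ leaves exactly $\int_U\gamma$; summing over the cover gives $\int_M\alpha\wedge p^*\gamma = \int_B\gamma$. Equivalently, one may phrase this as the projection (push--pull) formula $p_*(\alpha\wedge p^*\gamma) = (p_*\alpha)\,\gamma = \gamma$, the fiber integral $p_*\alpha$ being the constant function $1$. The ``in particular'' statement then follows by specializing to $F\equiv 1$, which gives $\vol(M,\alpha\wedge(d\alpha)^n) = \vol(B,\omega^n)$, and dividing the two displayed integrals by their (equal) total volumes yields $c_M(H) = c_B(F)$.

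The one point requiring care --- and the step I expect to be the main obstacle --- is bookkeeping rather than analysis: I must confirm that the orientation of $M$ determined by the canonical volume form $\alpha\wedge(d\alpha)^n$ agrees with the product orientation $d\theta\wedge p^*(\omega^n)$ used above, so that the fiber integral contributes precisely $+1$, the common period of the regular Reeb orbits, with no stray sign or constant. The merely continuous regularity of $F$ causes no difficulty: since $F$ appears only as a scalar coefficient and is never differentiated, the form-level identity $\alpha\wedge p^*(F\,\omega^n) = d\theta\wedge p^*(F\,\omega^n)$ holds verbatim and Fubini applies to the continuous integrand; alternatively one may verify the identity for smooth $F$ and pass to continuous $F$ by uniform approximation, both sides being bounded by $\|F\|_{C^0}\cdot\vol(M,\alpha\wedge(d\alpha)^n)$ and hence continuous in the uniform norm.
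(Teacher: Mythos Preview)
Your proof is correct and follows essentially the same approach as the paper: both arguments trivialize the bundle over an open cover of $B$, pull back a partition of unity, and integrate out the $S^1$-fiber using that the integrand is basic apart from the factor of $\alpha$. The paper states the key fiber-integration step more tersely (``$\mu_i$ and $H$ are constant on the Reeb orbits''), while you make it explicit by writing $\alpha = d\theta + p^*\lambda_U$ locally and observing that the $p^*\lambda_U$ term drops out for dimension reasons; but this is a difference of exposition, not of method.
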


\begin{proof}
Choose an open cover $\{ U_i \}$ of $B$ with the property that the bundle is trivial over each $U_i$, and let $\{ \lambda_i \}$ be a partition of unity subordinate to $\{ U_i \}$.
Denote by $V_i = p^{-1} (U_i) \cong U_i \times S^1$, and by $\mu_i = p^* \lambda_i = \lambda_i \circ p$ the partition of unity subordinate to the open cover $\{ V_i \}$ of $M$.
We compute
\begin{eqnarray*}
	\int_M H \, \alpha \wedge (d\alpha)^n &=& \sum_i \int_{U_i \times S^1} (\mu_i \, H)\, \alpha \wedge (d\alpha)^n \\
	&=& \sum_i \int_{U_i \times \{ \text{pt} \}} (\mu_i \, H) (d\alpha)^n \\
	&=& \sum_i \int_{U_i} (\lambda_i \, F) \, \omega^n \\
	&=& \int_B F \, \omega^n,
\end{eqnarray*}
which is what we set out to prove.
The second equality follows from the fact that $\mu_i$ and $H$ are constant on the Reeb orbits $S^1$.
Applying the above formula to the constant function $1$ proves the last part of the proposition.
\end{proof} 

\begin{proof}[First proof of Theorem~\ref{thm:helicity}]
The arguments in the proof are valid in any (odd) dimension, except the definition of helicity only makes sense when $\dim M = 3$.
Thus consider the Boothby-Wang bundle $S^1 \rightarrow M^{2n + 1} \rightarrow B^{2n}$ over an integral $(B,\omega)$.
Denote by $F \in C^\infty (B)$ the unique function such that $H = p^* F$, and write
	\[ c_F = \frac{1}{\vol (B,\omega^n)} \int_B F \, \omega^n, \]
for the average value of $F$ with respect to the canonical volume form $\omega^n$ on the base $B$.
Then $\int_B (F - c_F) \, \omega^n = 0$, and the $2n$-form $(F - c_F) \, \omega^n$ on $B$ is exact.
Choose a primitive $\gamma$, i.e.\ a $(2n - 1)$-form such that $d\gamma = (F - c_F) \, \omega^n$.
By Proposition~\ref{pro:average}, we have $c_F = c_H$.
Define a $(2n - 1)$-form $\beta$ on $M$ by
\begin{equation} \label{eqn:beta}
	\beta = (n + 1) p^* \gamma + ((n + 1) c_H  - n H) \alpha \wedge (d\alpha)^{n - 1}.
\end{equation}
By construction,
\begin{align*}
	d\beta &= (n + 1) ( p^* d\gamma + c_H (d\alpha)^n ) - n (dH \wedge \alpha + H d\alpha) \wedge (d\alpha)^{n - 1} \\
	&= (n + 1) ( (H - c_H) (d\alpha)^n + c_H (d\alpha)^n ) - n (dH \wedge \alpha + H d\alpha) \wedge (d\alpha)^{n - 1} \\
	&= H (d\alpha)^n + n \alpha \wedge dH \wedge (d\alpha)^{n - 1} \\
	&= H (d\alpha)^n - \alpha \wedge n (\iota_{X_H} d\alpha) \wedge (d\alpha)^{n - 1} \\
	&= \iota_{X_H} (\alpha \wedge (d\alpha)^n).
\end{align*}
In the case $n = 1$, this becomes
	\[ \beta = 2 p^* \gamma + (2 c_H - H) \alpha, \ {\rm and} \ d\beta = 2 H d\alpha - d (H \alpha) = \iota_{X_H} (\alpha \wedge d\alpha). \]
We obtain
\begin{align*}
	\beta \wedge d\beta &= ( 2 p^* \gamma + (2 c_H - H) \alpha ) \wedge ( 2 H d\alpha - d (H \alpha) ) \\
	&= 4 H p^* \gamma \wedge d\alpha - 2 p^* \gamma \wedge d (H \alpha) + (2 c_H - H) 2 H \alpha \wedge d\alpha \\
	& \hspace{1cm} - \; (2 c_H - H) \alpha \wedge d (H \alpha) \\
	&= 4 H p^* (\gamma \wedge \omega) - 2 H p^* d \gamma \wedge \alpha + \text{exact terms} \\
	& \hspace{1cm} + \; (2 c_H - H) H \alpha \wedge d\alpha \\
	&= - 2 H (H - c_H) d\alpha \wedge \alpha + (2 c_H H - H^2 ) \alpha \wedge d\alpha + \text{exact terms} \\
	&= (4 c_H H - 3 H^2) \alpha \wedge d\alpha + \text{exact terms},
\end{align*}
where we have used that the wedge product is graded commutative, and Lemma~\ref{lem:differential} for the third equality.
Therefore
	\[ \Hel (X_H) = \int_M \beta \wedge d\beta = \left( 4 c^2 (H) - 3 c (H^2) \right) \cdot \vol (M,\alpha \wedge d\alpha).\qedhere \]
\end{proof}

\begin{proof}[Second proof of Theorem~\ref{thm:helicity}]
Alternatively, suppose $\beta$ is given by (\ref{eqn:beta}), then
	\[ \beta (X_H) = (n + 1) (p^* \gamma) (X_H) + ((n + 1) c_H - n H) (\alpha \wedge (d\alpha)^{n - 1}) (X_H). \]
Since $p^* (\iota_{p_* X_H} \omega) = \iota_{X_H} p^* \omega = \iota_{X_H} d\alpha = - dH = - p^* (dF) = p^* (\iota_{- X_F} \omega)$, and $p^*$ is an isomorphism on exact one-forms, $X_H$ indeed has a well-defined projection $p_* X_H = - X_F$.
We see that $(p^* \gamma) (X_H) = p^* (\gamma (- X_F))$ (compare to (\ref{eqn:change-vol-form})), and by the same argument as in the proof of Proposition~\ref{pro:average},
	\[ \int_M (p^* \gamma) (X_H) \wedge \alpha \wedge d\alpha = - \int_B \gamma (X_F) \wedge \omega = - \int_B \gamma \wedge (\iota_{X_F} \omega). \]
The latter coincides with
	\[ - \int_B \gamma \wedge dF = - \int_B F d\gamma = - \int_B F (F - c_F) \omega^n. \]
Recalling that $p^*$ preserves average values by Proposition~\ref{pro:average}, we obtain
	\[ \int_M (p^* \gamma) (X_H) \wedge \alpha \wedge d\alpha = \left( c^2 (H) - c (H^2) \right) \cdot \vol (M,\alpha \wedge (d\alpha)^n). \]
Integrating the above expression for $\beta (X_H)$ over $M$ in the case $n = 1$ completes the proof.
\end{proof}

By Theorem~\ref{thm:helicity}, for volume preserving contact isotopies on regular contact manifolds, the helicity is an invariant of the generating Hamiltonian function rather than the corresponding vector field or isotopy.
By Theorem~\ref{thm:suspension} and Theorem~\ref{thm:double-suspension}, the same holds for suspensions of surface isotopies on the solid two-torus or the three-sphere.

We would like to alert the reader that this formula is only valid for the canonical volume form $\alpha \wedge d\alpha$.
If $\mu = c \, \alpha \wedge d\alpha$, then $\Hel (X;\mu) = c^2 \Hel (X;\alpha \wedge d\alpha)$.
If $M$ admits a regular $\alpha$, unless explicitly stated otherwise, we always assume the volume form is the canonical one induced by the contact form $\alpha$.
If $\alpha$ is not regular, then $X_H$ need not be exact \cite{mueller:fsc11}, see also Appendix~\ref{sec:torus}.

The same argument proves a relative version of Theorem~\ref{thm:helicity}.

\begin{theo}
Let $M$ be a closed three-manifold equipped with a contact form $\alpha$ as in Theorem~\ref{thm:helicity}, and let $X_H$ and $X_K$ be strictly contact vector fields with contact Hamiltonian functions $H$ and $K \in C^\infty_b (M)$.
Then $X_H$ and $X_K$ are exact divergence-free, and
	\[ \Rel (X_H,X_K) = \left( 4 c (H) c (K) - 3 c (H \cdot K) \right) \cdot \vol (M,\alpha \wedge d\alpha). \]
\end{theo}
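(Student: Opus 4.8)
The plan is to deduce the formula from Theorem~\ref{thm:helicity} together with the bilinearity of the relative helicity, so that no new form computation is strictly necessary. First I would record that the assignment $H \mapsto X_H$ is $\R$-linear: both defining equations $\iota_{X_H} \alpha = H$ and $\iota_{X_H} d\alpha = (R_\alpha . H) \alpha - dH$ are linear in $H$, whence $X_{H + K} = X_H + X_K$, and $H + K$ is again smooth and basic whenever $H$ and $K$ are. In particular $X_{H + K}$ is strictly contact, hence exact divergence-free (as is each of $X_H$ and $X_K$) by Theorem~\ref{thm:helicity}, so all three vector fields lie in the domain of $\Hel$ and $\Rel$.

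Next I would polarize. By the identity~(\ref{eqn:sum}),
	\[ \Rel (X_H, X_K) = \frac{1}{2} \left( \Hel (X_{H + K}) - \Hel (X_H) - \Hel (X_K) \right). \]
Substituting the three helicities from Theorem~\ref{thm:helicity} and using that $c$ is linear, so that $c (H + K) = c (H) + c (K)$ and $c ((H + K)^2) = c (H^2) + 2 c (HK) + c (K^2)$, the squared and cross terms collapse to
	\[ \Rel (X_H, X_K) = \left( 4 c (H) c (K) - 3 c (HK) \right) \cdot \vol (M, \alpha \wedge d\alpha), \]
which is the claimed formula.

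Alternatively, and this is the route suggested by the phrase \emph{the same argument}, I would repeat the first proof of Theorem~\ref{thm:helicity} with the two primitives $\beta_H = 2 p^* \gamma_H + (2 c_H - H) \alpha$ and $\beta_K = 2 p^* \gamma_K + (2 c_K - K) \alpha$, where $d\gamma_H = (F - c_F) \omega$, $d\gamma_K = (G - c_G) \omega$, and $H = p^* F$, $K = p^* G$. Expanding $\beta_H \wedge d\beta_K$ exactly as before --- the term $4 K \, p^* (\gamma_H \wedge \omega)$ vanishes for dimensional reasons, and Lemma~\ref{lem:differential} converts $p^* \gamma_H \wedge d (K \alpha)$ into $K \, p^* d\gamma_H \wedge \alpha$ up to an exact form --- yields
	\[ \beta_H \wedge d\beta_K = \left( 4 c_H K - 3 HK \right) \alpha \wedge d\alpha + \text{exact terms}, \]
and integrating over $M$ gives the same answer.

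The one point requiring care, and the one I would flag as the main subtlety, is that this last integrand $(4 c_H K - 3 HK) \alpha \wedge d\alpha$ is \emph{not} symmetric under exchanging $H$ and $K$, whereas $\Rel$ is symmetric by Lemma~\ref{lem:differential}. This is no contradiction: after integration the first term contributes $4 c (H) c (K) \cdot \vol (M)$, which is symmetric, so both orderings produce the identical value. I regard confirming this consistency --- equivalently, that $\int_M \beta_H \wedge d\beta_K = \int_M \beta_K \wedge d\beta_H$ --- as the chief bookkeeping check in the direct approach; the polarization argument makes the symmetry manifest from the outset and is therefore the cleaner of the two.
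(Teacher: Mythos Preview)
Your proof is correct. The paper's own proof consists of the single sentence ``The same argument proves a relative version of Theorem~\ref{thm:helicity},'' so your second, direct approach---expanding $\beta_H \wedge d\beta_K$ with the two primitives (\ref{eqn:beta})---is exactly what the authors have in mind. Your primary route via polarization, using (\ref{eqn:sum}) to write $\Rel(X_H,X_K) = \tfrac{1}{2}(\Hel(X_{H+K}) - \Hel(X_H) - \Hel(X_K))$ and then substituting Theorem~\ref{thm:helicity}, is a genuinely different and cleaner argument: it requires no new form manipulation, makes the symmetry in $H$ and $K$ manifest from the start (so the bookkeeping check you flag in the direct approach never arises), and shows transparently that the relative formula is forced by the absolute one together with linearity of $H \mapsto X_H$. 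The direct computation, on the other hand, is what generalizes to settings where one has a primitive in hand but no closed formula for $\Hel$ to polarize.
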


The map $\Rel (H,K) = (4 c (H) c (K) - 3 c (H \cdot K)) \cdot \vol (M,\alpha \wedge d\alpha)$ is symmetric and $\R$-bilinear, and defines a quadratic form on $C_b^\infty (M)$ given by $\Hel (H) = \Rel (H,H) = (4 c^2 (H) - 3 c (H^2)) \cdot \vol (M,\alpha \wedge d\alpha)$.
By definition, $\Rel (X_H,X_K) = \Rel (H,K)$ and $\Hel (X_H) = \Hel (H)$.
We again see that $\Hel (H \pm K) = \Hel (H) \pm 2 \Rel (H,K) + \Hel (K)$, and thus the helicity of a $C^1$-generic strictly contact vector field is nonzero.

Alternatively, the average value $c_H = c_F$ can be computed as follows.

\begin{lemma}
Let $S \subset M$ be a Reeb circle, i.e.\ the preimage $p^{-1} (b)$ of a point $b \in B$, and $D \subset M$ any filling disc, that is, $\partial D = S$.
Then
	\[ c_H = \int_D H \, d\alpha \]
independently of $b \in B$ and disc $D \subset M$ with boundary $S = p^{-1} (b)$.
\end{lemma}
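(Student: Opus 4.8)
The plan is to transport the computation from the total space $M$ down to the base surface $B$, where the hypothesis $\dim M = 3$ (so $\dim B = 2$) makes everything collapse. Since $H$ is basic, the isomorphism $p^* \colon C^\infty(B) \to C^\infty_b(M)$ furnishes a unique $F \in C^\infty(B)$ with $H = p^* F$, and because $p^* \omega = d\alpha$ we get $H \, d\alpha = p^* (F \omega)$. The decisive observation is that $F\omega$ is a top-degree form on the surface $B$, hence automatically closed, and that its de Rham class in $H^2(B;\R) \cong \R$ is detected by integration. As $\int_B (F - c_F)\,\omega = 0$ by the very definition of the average value $c_F$, the two-form $F\omega - c_F\,\omega$ is exact; I would fix a primitive $\sigma \in \Omega^1(B)$ with $d\sigma = F\omega - c_F\,\omega$.

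Pulling this back to $M$ yields the pointwise identity $H\,d\alpha = p^*(F\omega) = c_F\, d\alpha + d(p^* \sigma)$, after which the whole integral reduces to the boundary $S = \partial D$ by Stokes' theorem:
\[
	\int_D H \, d\alpha \;=\; c_F \int_S \alpha \;+\; \int_S p^* \sigma .
\]
The second term vanishes because $p$ collapses the Reeb circle $S = p^{-1}(b)$ to the single point $b$, so $p^* \sigma$ restricts to $S$ as the pullback of a one-form by a constant map. For the first term I would invoke regularity of $\alpha$: $S$ is a closed Reeb orbit of period one with $\alpha(R_\alpha) = 1$ along it, giving $\int_S \alpha = 1$ once $S$ is oriented as the boundary of $D$ compatibly with the Reeb flow. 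Hence $\int_D H\, d\alpha = c_F = c_H$, the last equality by Proposition~\ref{pro:average}. Since this value depends on neither $D$ nor $b$, the claimed independence follows at once.

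The only real content — and the single place the dimension enters — is the exactness of $F\omega - c_F\,\omega$, which hinges on $B$ being a surface so that $F\omega$ is closed; in higher dimensions $d(F\omega^n) = dF \wedge \omega^n$ need not vanish, and the reduction genuinely fails, consistent with the lemma being a statement about the three-manifold case. I regard the remaining work as bookkeeping: the main care is orienting the filling disc and the Reeb fiber compatibly so that $\int_S \alpha = +1$, and confirming that the basic function $H$ really does descend to $F$ so that the identity $H\,d\alpha = p^*(F\omega)$ holds; both are immediate from the material already developed.
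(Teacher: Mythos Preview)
Your proof is correct and follows essentially the same route as the paper: your primitive $\sigma$ is precisely the form $\gamma$ already chosen in the proof of Theorem~\ref{thm:helicity}, and the paper likewise applies Stokes, kills the $p^*\gamma$-contribution on $S$ via $p_* R_\alpha = 0$, evaluates $\int_S \alpha = 1$, and invokes Proposition~\ref{pro:average}. One minor correction to your closing aside: on $B^{2n}$ the form $F\omega^n$ is top-degree and hence always closed, so the exactness of $(F - c_F)\omega^n$ holds in every dimension (as indeed used in the proof of Theorem~\ref{thm:helicity}); the dimension restriction in the lemma comes from the existence of a filling disc with $\partial D = S$, not from closedness of $F\omega$.
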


\begin{proof}
One way to see the above identity is as follows.
First note that
	\[ \int_D H \, d\alpha = \int_D d (p^* \gamma + c_F \alpha) = \int_S p^* \gamma + c_F \alpha. \]
Let $t \mapsto x (t)$ be a parameterization of the Reeb circle by arc length.
Then the last integral is equal to
	\[ \int_0^1 \iota_{R_\alpha} (p^* \gamma + c_F \alpha) ( x (t) ) \, dt = \int_0^1 \iota_{p_* R_\alpha} \gamma ( b ) \, dt + c_F = c_F, \]
since $p_* R_\alpha = 0$.
The lemma now follows from Proposition~\ref{pro:average}.
\end{proof} 

\begin{exa}\label{exa:lift}
We can decompose any strictly contact vector field $X_H$ into its horizontal and vertical parts $(X_H - H R_\alpha) + H R_\alpha$ with respect to the projection $p_*$.
Note that $X_{H + c} = X_H + c R_\alpha$, so the kernel of the (surjective) homomorphism $p_*$ is indeed generated over the reals by $R_\alpha$.
For a smooth function $F \in C^\infty (B)$, the horizontal lift $Y_F = H R _\alpha - X_H$ of the Hamiltonian vector field $X_F$ is exact, and by H{\"o}lder's inequality,
	\[ \Hel (Y_F) = ( c^2 (H) - c (H^2) ) \cdot \vol (M) = (c^2(F) - c (F^2)) \cdot \vol (B) \le 0, \]
with equality if and only if $F$ is constant, or equivalently, $X_F = 0$.
\end{exa}

\begin{prop}
The absolute value of the helicity $\Hel (X_H)$ is bounded by a constant times the square of the $L^2$-norm of $H$ on $(M,\alpha \wedge d\alpha)$.
In fact,
	\[-3 \| H \|_{L^2}^2 \le \Hel (X_H) \le \| H \|_{L^2}^2, \]
with equality if and only if $H$ has mean value zero or is constant, respectively.
Moreover, the restriction of the helicity to strictly contact vector fields can take any real value.
\end{prop}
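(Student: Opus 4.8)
The plan is to read everything off from Theorem~\ref{thm:helicity}, which expresses $\Hel(X_H) = \left( 4 c^2(H) - 3 c(H^2) \right) \cdot V$ with $V = \vol(M, \alpha \wedge d\alpha)$, and to combine it with the single elementary inequality relating the mean of $H$ to the mean of $H^2$. First I would rewrite the $L^2$-norm in the same language: since $\mu = \alpha \wedge d\alpha$ is the canonical volume form, $\| H \|_{L^2}^2 = \int_M H^2 \, \mu = c(H^2) \cdot V$. Thus both the helicity and the squared norm are expressed through the two averages $c(H)$ and $c(H^2)$ together with the constant $V$, and the whole proposition reduces to comparing the numbers $4 c^2(H) - 3 c(H^2)$ and $c(H^2)$.

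The key ingredient is the variance inequality $0 \le c(H)^2 \le c(H^2)$, which is just the Cauchy--Schwarz inequality applied to $H$ and the constant function $1$ (equivalently, Jensen's inequality for $x \mapsto x^2$), already used in the guise of H\"older's inequality in Example~\ref{exa:lift}. For the upper bound I would estimate $4 c^2(H) - 3 c(H^2) \le 4 c(H^2) - 3 c(H^2) = c(H^2)$ and multiply by $V$, giving $\Hel(X_H) \le \| H \|_{L^2}^2$; equality forces $c(H)^2 = c(H^2)$, i.e.\ the variance of $H$ vanishes, which happens exactly when $H$ is constant. For the lower bound I would discard the nonnegative term $4 c^2(H) \ge 0$, so that $4 c^2(H) - 3 c(H^2) \ge -3 c(H^2)$, whence $\Hel(X_H) \ge -3 \| H \|_{L^2}^2$; here equality forces $c(H) = 0$, i.e.\ $H$ has mean value zero. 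Together these give $| \Hel(X_H) | \le 3 \| H \|_{L^2}^2$, which is the asserted bound by a constant times the squared norm.

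For the final assertion that the helicity is surjective onto $\R$, I would exhibit two one-parameter families. Taking $H$ to be the constant $s$ yields $c(H) = s$ and $c(H^2) = s^2$, so $\Hel(X_H) = s^2 V$, sweeping out all of $[0, \infty)$ as $s$ ranges over $\R$. For the negative values I would fix any non-constant basic function $H_0$ of mean value zero --- such functions exist because $p^* \colon C^\infty(B) \to C^\infty_b(M)$ is an isomorphism and $\dim B > 0$, so after subtracting its mean any non-constant pullback works --- and scale it: $\Hel(X_{\lambda H_0}) = -3 \lambda^2 \| H_0 \|_{L^2}^2$ ranges over all of $(-\infty, 0]$ as $\lambda \to \infty$. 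The two families overlap at $0$, so every real number is attained.

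There is no serious obstacle here once Theorem~\ref{thm:helicity} is in hand: the entire statement is an exercise in the inequality $c(H)^2 \le c(H^2)$ together with its equality case. The only point demanding mild care is the surjectivity claim, where one must confirm the existence of a non-constant mean-zero basic function; this is immediate from the isomorphism $p^* \colon C^\infty(B) \to C^\infty_b(M)$ of Section~\ref{sec:contact} and the positive-dimensionality of the base $B$.
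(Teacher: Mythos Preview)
Your proof is correct and follows essentially the same route as the paper: both reduce the inequalities to $c(H)^2 \le c(H^2)$ (H\"older/Cauchy--Schwarz) applied to the formula from Theorem~\ref{thm:helicity}, with the same identification of the equality cases. The only minor difference is in the surjectivity argument: the paper instead computes $\Hel(X_{H-c})$ as a quadratic in $c$ and uses a two-parameter family $c_1 X_{H - c_2}$, whereas you split into constants (for nonnegative values) and scaled mean-zero functions (for nonpositive values); both constructions are elementary and equally direct.
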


Compare to Arnold's inequality (\ref{eqn:energy-helicity-ineq}).

\begin{proof}
Consider a basic function $H \in C^\infty_b (M)$, and denote by $\| \cdot \|_{L^2}$ the $L^2$-norm on $(M,\alpha \wedge d\alpha)$, i.e.\ $\| H \|_{L^2}^2 = \int_M H^2 \alpha \wedge d\alpha$.
Again H\"older's inequality implies $c^2 (H) \le c (H^2)$, with equality if and only if $H$ is a constant function.
Thus
\begin{align*}
-3 \| H \|_{L^2}^2 &= -3c(H^2)\cdot \vol (M)\\
&\le \Hel (X_H)\\ 
&= ( 4 c^2 (H) - 3 c (H^2) ) \cdot \vol (M)\\ 
&\le c (H^2)  \cdot \vol (M)\\
&= \| H \|_{L^2}^2.
\end{align*}

A straightforward calculation shows
	\[ \Hel (X_{H - c}) = \Hel (X_H) - 2 c c_H \cdot \vol (M) + c^2 \cdot \vol (M), \]
which is a quadratic function of $c \in \R$ with global minimum at $c = c_H$.
Thus if $H$ has negative mean value, the helicity takes any real value on vector fields of the form $c_1 X_{H - c_2}$ for $c_1, c_2 \in \R$.
\end{proof}

\section{The Hopf bundle} \label{sec:hopf-bundle}
On the unit three-sphere $S^3 \subset \C^2$, write
\begin{align*}
& z_1 = x_1 + i y_1 = e^{i \xi_1} \sin \eta = \cos \xi_1 \sin \eta + i \sin \xi_1 \sin \eta, \\
& z_2 = x_2 + i y_2 = e^{i \xi_2} \cos \eta = \cos \xi_2 \cos \eta + i \sin \xi_2 \cos \eta,
\end{align*}
for $(z_1,z_2) \in S^3$, where $0 \le \eta \le \frac{\pi}{2}$, and $0 \le \xi_1, \xi_2 < 2 \pi$ are \emph{Hopf coordinates}.
The standard regular contact form on $S^3$ is
	\[ \alpha = \frac{1}{2 \pi} ( x_1 dy_1 - y_1 dx_1 + x_2 dy_2 - y_2 dx_2 ) = \frac{1}{2 \pi} ( \sin^2 \eta \, d\xi_1 + \cos^2 \eta \, d\xi_2 ), \]
with
	\[ d\alpha = \frac{1}{\pi} ( dx_1 \wedge dy_1 + dx_2 \wedge dy_2 ) = \frac{1}{2 \pi} \sin (2 \eta) d\eta \wedge ( d\xi_1 - d\xi_2 ), \]
so that the Reeb vector field is equal to the Hopf vector field given by $R_\alpha = 2 \pi ( \frac{\partial}{\partial \xi_1} + \frac{\partial}{\partial \xi_2} )$, which generates the (one-periodic) Reeb or Hopf flow on $S^3$.
The corresponding volume form is
	\[ \alpha \wedge d\alpha = \frac{1}{(2 \pi)^2} \sin (2 \eta) d\eta \wedge d\xi_1 \wedge d\xi_2 = \frac{1}{2 \pi^2} dV, \]
where $dV$ is the standard volume form on the unit three-sphere.
The total volume of $S^3$ with respect to $\alpha \wedge d\alpha$ equals $1$.

On the unit two-sphere $S^2 \subset \R^3$, consider \emph{spherical coordinates}
	\[ x = \cos \varphi, \ \ \ y = \sin \varphi \cos \psi, \ \ \ z = \sin \varphi \sin \psi, \]
where $0 \le \varphi \le \pi$, and $0 \le \psi < 2 \pi$.
The standard area (or symplectic) form is (up to scaling)
	\[ \omega = \frac{1}{4 \pi} (x\, dy \wedge dz + y\, dz \wedge dx + z\, dx \wedge dy ) = \frac{1}{4 \pi} \sin \varphi\, d\varphi \wedge d\psi = \frac{1}{4 \pi} dV, \]
where again $dV$ denotes the standard area form on the unit two-sphere.
This gives $S^2$ a total area of $1$ with respect to $\omega$.

Recall the Hopf bundle $S^1 \stackrel{i}{\longrightarrow} S^3 \stackrel{p}{\longrightarrow} S^2$.
In the above coordinates, the projection becomes $p (\eta, \xi_1, \xi_2) = (2 \eta, \xi_1 - \xi_2) = (\varphi, \psi)$.
We have
	\[ p^* \omega = \frac{1}{4 \pi} \sin (2 \eta) d (2 \eta) \wedge d (\xi_1 - \xi_2) = \frac{1}{2 \pi} \sin (2 \eta) d\eta \wedge (d\xi_1 - d\xi_2) = d\alpha, \]
so that $p \colon (S^3,\alpha) \to (S^2,\omega)$ is the prequantization bundle (\ref{eqn:bw-bundle}) over the integral symplectic surface $(S^2,\omega)$.
By Theorem~\ref{thm:helicity} and Proposition~\ref{pro:average},
	\[ \Hel (X_H) = 4 c_H^2 - 3 c_{H^2} = 4 c_F^2 - 3 c_{F^2}, \]
where $H = F \circ p$, and where the average values are computed with respect to the volume form $\alpha \wedge d\alpha$ on $S^3$ and the area form $\omega$ on $S^2$.

\begin{exa}\label{exa:sphere}
The Reeb vector field $R_\alpha = X_H$ with $H = 1$ generates the Reeb flow on $S^3$, and $\Hel (R_\alpha) = 4 c (1)^2 - 3 c (1^2) = 1$.
This vector field (as well as its negative) is an eigenvector with eigenvalue $1$ of the curl, and an energy minimizer on its adjoint orbit, with respect to an associated Riemannian metric $g = \alpha \otimes \alpha + d\alpha (\cdot, J \cdot)$ \cite{blair:rgc10}.

Let $X_H = 2 \pi ( \frac{\partial}{\partial \xi_2} - \frac{\partial}{\partial \xi_1} )$, then $H = \cos (2 \eta)$, or $F = \cos \varphi$ with $H = F \circ p$.
	\[ c_F = \frac{1}{4 \pi} \int_0^\pi \int_0^{2 \pi} \cos \varphi \sin \varphi \, d\varphi \, d\psi = 0, \]
and
	\[ c_{F^2} = \frac{1}{4 \pi} \int_0^\pi \int_0^{2 \pi} \cos^2 \varphi \sin \varphi \, d\varphi \, d\psi = \frac{1}{3}, \]
therefore $\Hel (X_H) = - 1$.

Consider $H = \cos^2 \eta = \frac{1}{2} (1 + \cos (2 \eta))$ and $\sin^2 \eta = \frac{1}{2} (1 -  \cos (2\eta))$, corresponding to the strictly contact vector fields $X_H = 2 \pi \frac{\partial}{\partial \xi_2}$ and $2 \pi \frac{\partial}{\partial \xi_1}$.
Then $F = \frac{1}{2} (1 + \cos \varphi)$ and $\frac{1}{2} (1 - \cos \varphi)$, respectively, and we compute as above $\Hel (X_H) = 0$ in both cases.

By Equation~(\ref{eqn:sum}), we can compute the relative helicity of these vector fields.
For example, $4 \pi \frac{\partial}{\partial \xi_2} = R_\alpha + 2 \pi (\frac{\partial}{\partial \xi_2} - \frac{\partial}{\partial \xi_1})$, and thus $\Rel (R_\alpha, 2 \pi (\frac{\partial}{\partial \xi_2} - \frac{\partial}{\partial \xi_1})) = 0$.
\end{exa}

\section{Homotopies rel end points} \label{sec:homotopy}
We begin by recalling the following proposition, which is essentially contained in \cite{banyaga:gdp78}.
For the readers' convenience, a complete proof is given in Appendix~\ref{sec:proof}.

\begin{prop} \label{pro:homotopy-groups}
Let $(M,\alpha)$ be a closed and connected regular contact three-manifold, and $S^1 \to M \to B$ be the associated Boothby-Wang bundle (\ref{eqn:bw-bundle}) over the closed and connected integral symplectic surface $(B,\omega)$.
If the base $B$ has positive genus, then the inclusion $S^1 \hookrightarrow \Diff_0 (M,\alpha)$ into the identity component of the group of strictly contact diffeomorphisms is a homotopy equivalence.
In particular, the fundamental group of $\Diff (M,\alpha)$ is $\Z$, with generator the homotopy class of the one-periodic Reeb flow, and for $k > 1$, $\pi_k ( \Diff (M,\alpha) )$ is trivial.
If the base $B = S^2$, i.e.\ the Boothby-Wang bundle is the Hopf fibration, then the one-periodic Reeb flow represents twice the generator of $\pi_1 ( \Diff (S^3,\alpha) ) = \Z$, and the fundamental group is generated by the homotopy class of the flow of the vector field $2 \pi \frac{\partial}{\partial \xi_1}$, which coincides with the homotopy class of the flow of the vector field $2 \pi \frac{\partial}{\partial \xi_2}$.
Moreover, we have $\pi_k ( \Diff (S^3,\alpha) ) \cong \pi_k (S^3)$ for $k > 1$.
\end{prop}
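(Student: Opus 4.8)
The plan is to extract the homotopy type of $\Diff_0 (M,\alpha)$ directly from Banyaga's central extension (\ref{eqn:s^1-ext}), viewing it as a locally trivial principal $S^1$-bundle of topological groups $S^1 \to \Diff_0 (M,\alpha) \to \Ham (B,\omega)$. Such a bundle yields a long exact sequence of homotopy groups
	\[ \cdots \to \pi_k (S^1) \to \pi_k (\Diff_0 (M,\alpha)) \to \pi_k (\Ham (B,\omega)) \to \pi_{k - 1} (S^1) \to \cdots, \]
in which the maps out of and into the $S^1$-factor are induced by $i_*$ and the connecting homomorphism. Since $\pi_k (S^1) = 0$ for $k \ge 2$ and the base is connected, the entire calculation is reduced to knowing the homotopy type of the Hamiltonian group $\Ham (B,\omega)$ of the base surface, together with a careful bookkeeping of the bottom of the sequence. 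I emphasize that $\pi_k$ for $k \ge 1$ only sees the identity component, so there is no loss in working with $\Diff_0$ throughout.

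For the positive genus case, the input I would invoke is that $\Ham (\Sigma_g,\omega)$ is contractible for every $g \ge 1$. For $g \ge 2$ this follows from the contractibility of $\Diff_0 (\Sigma_g)$ (Earle--Eells), from Moser's argument that the inclusion of area preserving into all diffeomorphisms is a homotopy equivalence, and from viewing the flux homomorphism $\Symp_0 (\Sigma_g) \to H^1 (\Sigma_g;\R)$ as a fibration with contractible base and total space, so that its fiber $\Ham$ is weakly contractible; for the torus the flux map $\Symp_0 (T^2) \simeq T^2 \to T^2$ is instead a homotopy equivalence with contractible kernel. Granting this, all groups $\pi_k (\Ham (B,\omega))$ vanish, the long exact sequence collapses, and the inclusion $i \colon S^1 \hookrightarrow \Diff_0 (M,\alpha)$ induces isomorphisms on all homotopy groups; by Whitehead's theorem it is a homotopy equivalence. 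The stated conclusions for $\pi_1$ (equal to $\Z$, generated by the one-periodic Reeb flow $i_*$) and the vanishing of $\pi_k$ for $k > 1$ are then immediate.

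For the base $B = S^2$ I would use the Smale--Gromov theorem that $\Ham (S^2,\omega) = \Symp (S^2,\omega) \simeq SO(3)$, so that $\pi_1 (\Ham (S^2)) = \Z / 2$, $\pi_2 (\Ham (S^2)) = 0$, and $\pi_k (\Ham (S^2)) \cong \pi_k (SO(3)) \cong \pi_k (S^3)$ for $k \ge 2$ via the universal double cover $S^3 \to SO(3)$. Feeding this into the long exact sequence, for $k \ge 3$ both neighboring $S^1$-terms vanish and one obtains $\pi_k (\Diff (S^3,\alpha)) \cong \pi_k (S^3)$ directly, while $\pi_2 (\Diff (S^3,\alpha)) \cong \pi_2 (SO(3)) = 0 = \pi_2 (S^3)$; this already establishes the claim for $k > 1$. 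Because the connecting map $\pi_2 (SO(3)) = 0 \to \pi_1 (S^1)$ is trivial and $\pi_0 (S^1) = 0$, the remaining portion of the sequence is a short exact sequence
	\[ 0 \to \Z \to \pi_1 (\Diff (S^3,\alpha)) \to \Z / 2 \to 0, \]
with the left $\Z$ the image of the Reeb loop $i_*$.

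The hard part will be resolving this extension, i.e.\ showing it is the nonsplit one with total group $\Z$ rather than $\Z \oplus \Z / 2$; this is precisely where the explicit geometry of the Hopf bundle enters. Here I would use the computation of Example~\ref{exa:sphere}: writing $R_\alpha = 2 \pi \frac{\partial}{\partial \xi_1} + 2 \pi \frac{\partial}{\partial \xi_2}$ as a sum of two \emph{commuting} strictly contact vector fields, whose one-periodic flows represent the \emph{same} homotopy class $g \in \pi_1 (\Diff (S^3,\alpha))$, the Eckmann--Hilton argument (the pointwise product of loops in a topological group agrees with concatenation) shows that the Reeb loop equals $2 g$. On the other hand, the projection $p_*$ sends the flow of $2 \pi \frac{\partial}{\partial \xi_1}$ to a full Hamiltonian rotation loop on $S^2$, which generates $\pi_1 (SO(3)) = \Z / 2$; thus $g$ maps to the nontrivial class, while $2 g$ generates the subgroup $\Z = \mathrm{im}\, i_*$. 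It follows that $g$ has infinite order and generates the whole group, so $\pi_1 (\Diff (S^3,\alpha)) = \Z$ with generator the flow of $2 \pi \frac{\partial}{\partial \xi_1}$, and the Reeb flow represents twice this generator, as claimed. The two genuine obstacles are therefore the deep topological input on the homotopy type of $\Ham (B,\omega)$ (which I would cite rather than reprove) and this extension problem, whose solution hinges on the factor of two visible in the explicit vector field calculation.
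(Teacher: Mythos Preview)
Your approach is essentially the paper's: derive the long exact homotopy sequence from Banyaga's extension (\ref{eqn:s^1-ext}) and feed in the known homotopy type of $\Ham(B,\omega)$ via Earle--Eells, Moser, and the $SO(3)$ identification for $B=S^2$. The paper spends its effort on the point you assume, namely that (\ref{eqn:s^1-ext}) is a Serre fibration, by explicitly lifting disk-families of Hamiltonian isotopies through the Boothby--Wang projection; conversely, you supply more detail than the paper on resolving the $\pi_1$ extension $0\to\Z\to\pi_1(\Diff(S^3,\alpha))\to\Z/2\to 0$, which the paper dispatches in a single sentence. One small gap in your argument: you \emph{assert} that the one-periodic flows of $2\pi\,\partial/\partial\xi_1$ and $2\pi\,\partial/\partial\xi_2$ represent the same class $g$, and this equality is exactly what forces the extension to be nonsplit, so it must be justified. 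The quickest fix is to observe that the unitary involution $(z_1,z_2)\mapsto(z_2,z_1)$ lies in $U(2)\subset\Diff_0(S^3,\alpha)$ and conjugates one flow into the other; inner automorphisms of a path-connected topological group act trivially on $\pi_1$, so the two loops are homotopic.
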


\begin{cor}
If $M = S^3$, the helicity of a strictly contact vector field depends on the homotopy class rel end points of the isotopy it generates.
This holds true whether we consider homotopies in $\Diff (S^3,\alpha)$ or $\Diff (S^3,\alpha \wedge d\alpha)$.
\end{cor}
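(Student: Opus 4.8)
The plan is to produce two strictly contact vector fields on $S^3$ whose isotopies have identical end points — the identity at both $t = 0$ and $t = 1$ — but which represent different homotopy classes rel end points and carry different helicities; a single pair of isotopies will serve for both groups simultaneously. The natural candidates are furnished by Example~\ref{exa:sphere}: the Reeb field $R_\alpha = X_H$ with $H = 1$, whose flow is the one-periodic Hopf flow and satisfies $\Hel (R_\alpha) = 1$, and the field $2\pi \frac{\partial}{\partial \xi_1} = X_H$ with $H = \sin^2 \eta$, whose flow is likewise one-periodic and satisfies $\Hel (X_H) = 0$. (More generally $\Hel (2\pi (a \frac{\partial}{\partial \xi_1} + b \frac{\partial}{\partial \xi_2})) = a b$ by Theorem~\ref{thm:helicity}, since $\vol (S^3,\alpha \wedge d\alpha) = 1$.) Both Hamiltonians are basic, so both fields are strictly contact; in particular they preserve $\alpha$, hence the canonical volume form $\alpha \wedge d\alpha$, and since both flows are one-periodic, the two isotopies are loops based at the identity in each of $\Diff (S^3,\alpha)$ and $\Diff (S^3,\alpha \wedge d\alpha)$.

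For homotopies in $\Diff (S^3,\alpha)$ I would appeal directly to Proposition~\ref{pro:homotopy-groups}, which identifies $\pi_1 (\Diff (S^3,\alpha)) \cong \Z$ with the class of the $\frac{\partial}{\partial \xi_1}$-flow as a generator, and records that the Hopf flow represents twice this generator. Thus the two loops above represent the distinct elements $2$ and $1$ of $\Z$, while their helicities $1$ and $0$ differ; so the helicity is not determined by the end points alone, and the two isotopies, sharing end points but lying in different homotopy classes, carry different helicities.

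For homotopies in the larger group $\Diff (S^3,\alpha \wedge d\alpha)$ the Hopf flow, being twice a generator, may now become null-homotopic, so I would argue instead by comparison with the ambient diffeomorphism group. It suffices to show the two loops are not homotopic rel end points already in $\Diff (S^3)$, since any based homotopy through volume preserving maps is in particular one through diffeomorphisms. Because $R_\alpha = 2\pi (\frac{\partial}{\partial \xi_1} + \frac{\partial}{\partial \xi_2})$ and these vector fields commute, the concatenation of the Hopf loop with the reverse of the $\frac{\partial}{\partial \xi_1}$-loop is homotopic to the $\frac{\partial}{\partial \xi_2}$-flow, that is, a rotation in a single complex coordinate plane. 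By the Smale conjecture (Hatcher) the inclusion $SO(4) \hookrightarrow \Diff_0 (S^3)$ is a homotopy equivalence, so $\pi_1 (\Diff_0 (S^3)) \cong \pi_1 (SO(4)) \cong \Z / 2\Z$, and a single-plane rotation represents its nontrivial generator. Hence this difference loop is nontrivial in $\Diff (S^3)$, the two loops lie in different homotopy classes in $\Diff (S^3,\alpha \wedge d\alpha)$ as well, and once more their helicities $1$ and $0$ differ.

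I expect the volume preserving case to be the main obstacle. The contact-group statement is immediate from Proposition~\ref{pro:homotopy-groups}, and the helicity values come from Example~\ref{exa:sphere}. The work lies in the passage to $\Diff (S^3,\alpha \wedge d\alpha)$: one must correctly compute $\pi_1 (\Diff_0 (S^3))$, which is where the Smale conjecture enters, and carefully pin down the $\pi_1$-classes of the explicit torus rotations — verifying that a single-plane rotation generates $\Z / 2\Z$ while the diagonal Hopf rotation, being its square, is trivial. One must also check that, although the Hopf flow changes homotopy class upon passing from the contact to the volume preserving group, the chosen companion loop still lies in a different class and still carries a different helicity.
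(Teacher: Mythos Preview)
You have misread the corollary. The paper's intended meaning --- confirmed by its proof and by the sentence immediately following (``In other words, the helicity is not an invariant on the universal covering space\ldots, i.e.\ of the homotopy class rel end points'') --- is that the helicity is \emph{not} a homotopy invariant: two strictly contact isotopies lying in the \emph{same} homotopy class rel end points may have different helicities. You instead prove the weaker assertion that two isotopies with the same end points but in \emph{different} homotopy classes may have different helicities. That is consistent with the helicity being a homotopy invariant, and so does not establish what the paper claims.

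The paper's argument chooses examples in the \emph{same} class in $\pi_1(\Diff(S^3,\alpha))\cong\Z$: by Proposition~\ref{pro:homotopy-groups}, the Reeb loop represents $2$, and so does the flow of $4\pi\,\partial/\partial\xi_i$ (twice the generator); their helicities are $1$ and $0$. Equivalently, $2\pi(\partial/\partial\xi_2-\partial/\partial\xi_1)$ represents the trivial class (since the $\xi_1$- and $\xi_2$-flows are homotopic), as does the zero field; their helicities are $-1$ and $0$. This immediately handles the volume preserving case without any appeal to the Smale conjecture: a homotopy in $\Diff(S^3,\alpha)$ is a fortiori a homotopy in $\Diff(S^3,\alpha\wedge d\alpha)$, so the same pair remains homotopic there and still has unequal helicities. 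Your Smale-conjecture computation is correct but is working in the wrong direction --- you are labouring to keep the loops \emph{apart}, whereas the statement requires a pair that stays \emph{together}.
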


\begin{proof}
By Example \ref{exa:sphere}, we have $\Hel ( 4 \pi \frac{\partial}{\partial \xi_i} ) = 0 \not= 1 = \Hel (R_\alpha)$, for $i = 1, 2$, or $\Hel ( 2 \pi ( \frac{\partial}{\partial \xi_2} - \frac{\partial}{\partial \xi_1} ) ) = - 1 \not= 0 = \Hel (0)$.
Thus by Proposition~\ref{pro:homotopy-groups}, the helicity depends on the homotopy class rel end points.
The last statement follows from the induced homomorphism on fundamental groups.
\end{proof}

In other words, the helicity is not an invariant on the universal covering space of $\Diff_0 (M,\alpha)$, i.e.\ of the homotopy class rel end points of an isotopy.
We write $H \sim K$ if the isotopies $\Phi_H$ and $\Phi_K$ are homotopic rel end points through a homotopy of strictly contact isotopies.
That is, there exists a two-parameter family $\phi_{s,t}$ of strictly contact diffeomorphism, with $\phi_{s,0} = \id$ and $\phi_{s,1} = \phi \in \Diff (M,\alpha)$ for all $0 \le s \le 1$.
Denote by $X_{s,t}$ and $Y_{s,t}$ the vector fields defined by
	\[ \frac{d}{dt} \phi_{s,t} = X_{s,t} \circ \phi_{s,t}, \hspace{1cm} \frac{d}{ds} \phi_{s,t} = Y_{s,t} \circ \phi_{s,t}. \]
In particular, $Y_{s,0} = 0 = Y_{s,1}$.
Since the diffeomorphisms $\phi_{s,t}$ preserve $\alpha$, the vector fields $X_{s,t}$ and $Y_{s,t}$ are strictly contact.

\begin{lemma}{\cite{banyaga:gdp78}}
If $H \sim K$, then $c (H) = c (K)$.
\end{lemma}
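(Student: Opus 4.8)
The plan is to interpolate the mean value along the homotopy and show that it does not depend on the homotopy parameter $s$. Set $H_{s,t} = \alpha (X_{s,t})$ and $G_{s,t} = \alpha (Y_{s,t})$; these are basic functions since $X_{s,t}$ and $Y_{s,t}$ are strictly contact. At the endpoints the family restricts to $\Phi_H$ and $\Phi_K$, so I would define $\rho (s) = \int_0^1 c (H_{s,t}) \, dt$ and observe that $\rho (0) = c (H)$ and $\rho (1) = c (K)$ (reading $c(H)$ as the time average $\int_0^1 c(H_t)\,dt$ when $H$ is time-dependent, which collapses to $c(H)$ in the autonomous case). It then suffices to prove $\rho$ is constant, which I would establish by differentiating under the integral sign.

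First I would record the flow commutation identity. Differentiating the defining relations $\partial_t \phi_{s,t} = X_{s,t} \circ \phi_{s,t}$ and $\partial_s \phi_{s,t} = Y_{s,t} \circ \phi_{s,t}$ and equating the mixed second derivatives of $\phi_{s,t}$ yields $\partial_s X_{s,t} - \partial_t Y_{s,t} = [X_{s,t}, Y_{s,t}]$. Because $\alpha$ depends on neither $s$ nor $t$, contracting with $\alpha$ gives $\partial_s H_{s,t} = \partial_t G_{s,t} + \alpha ([X_{s,t}, Y_{s,t}])$. Since $c$ is linear and, being an integral over $M$, commutes with $\partial_t$, I obtain $\frac{d}{ds} \rho (s) = \int_0^1 \partial_t \, c (G_{s,t}) \, dt + \int_0^1 c ( \alpha ([X_{s,t}, Y_{s,t}]) ) \, dt$.

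The first integral equals $c (G_{s,1}) - c (G_{s,0})$, which vanishes because $Y_{s,0} = 0 = Y_{s,1}$ forces $G_{s,0} = G_{s,1} = 0$. The heart of the argument is the second integrand: I would show that $c ( \alpha ([X_{H'}, X_{G'}]) ) = 0$ for any two basic functions $H', G'$. Since the Lie bracket of strictly contact vector fields is again strictly contact, $\alpha ([X_{H'}, X_{G'}])$ is basic; writing $H' = p^* F$ and $G' = p^* F'$ and using $p_* X_{H'} = - X_F$, one has $p_* [X_{H'}, X_{G'}] = [X_F, X_{F'}] = X_{\{ F, F' \}}$, so this basic function corresponds, up to sign, to the Poisson bracket $\{ F, F' \}$ on $(B, \omega)$. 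By Proposition~\ref{pro:average} its average on $M$ equals the average of $\{ F, F' \}$ on $B$, and $\int_B \{ F, F' \} \, \omega^n = \pm \int_B ( \mathcal L_{X_{F'}} F ) \, \omega^n = 0$ by Stokes' theorem, since $X_{F'}$ preserves $\omega^n$ and hence the integrand $(\mathcal L_{X_{F'}} F) \, \omega^n = d ( \iota_{X_{F'}} (F \, \omega^n) )$ is exact. Thus $\frac{d}{ds} \rho (s) = 0$, giving $c (H) = \rho (0) = \rho (1) = c (K)$.

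The main obstacle I anticipate is the bookkeeping around $c ( \{ F, F' \} ) = 0$: making the projection $p_* [X_{H'}, X_{G'}] = X_{\{ F, F' \}}$ precise (tracking the sign from $p_* X_{H'} = - X_F$) and confirming that the contact Hamiltonian of the bracket is genuinely basic and projects to the Poisson bracket, with no leftover contribution in the Reeb direction. Everything else is a routine differentiation-under-the-integral computation together with the vanishing of the boundary terms arising from $Y_{s,0} = Y_{s,1} = 0$.
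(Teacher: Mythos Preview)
Your proposal is correct and follows essentially the same route as the paper: the paper contracts the identity $\partial_s X_{s,t} = \partial_t Y_{s,t} + [X_{s,t},Y_{s,t}]$ with $\alpha$, integrates over $[0,1]\times[0,1]\times M$, and invokes in one parenthetical that ``the bracket of two functions has vanishing average value,'' which is exactly what your argument unpacks. Your justification of that vanishing via projection to the Poisson bracket on $(B,\omega)$ is valid; a slightly shorter alternative (not needing regularity) is to observe that $\alpha([X_{H'},X_{G'}]) = X_{H'}\cdot G'$ since $\mathcal L_{X_{H'}}\alpha = 0$, and then $\int_M (X_{H'}\cdot G')\,\mu = 0$ because $X_{H'}$ is divergence-free.
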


\begin{proof}
It is well-known \cite{banyaga:sgd78} that
	\[ \frac{d}{ds} X_{s,t} = \frac{d}{dt} Y_{s,t} + [X_{s,t},Y_{s,t}]. \]
Contracting $\alpha$ with this equation and integrating over $[0,1]\times [0,1]\times M$ (the bracket of two functions has vanishing average value) proves the lemma.
\end{proof}

\begin{cor}
Suppose $H \sim K$ for two basic functions $H$ and $K$.
Then
	\[ \Hel (X_H) - \Hel (X_K) = 3 ( \| K \|_{L^2}^2 - \| H \|_{L^2}^2 ). \]
Thus the helicities of $X_H$ and $X_K$ are equal if and only if the $L^2$-norms of $H$ and $K$ coincide;
furthermore, if $\Hel (X_H) \ge \Hel (X_K)$, then $\| H \|_{L^2} \le \| K \|_{L^2}$, and if $\Hel(X_H) > \Hel(X_F)$, then $\| H \|_{L^2} < \| F \|_{L^2}$.
Conversely, suppose two basic functions $H$ and $K$ generate contact isotopies with the same end point.
If either $\Hel (X_H) \ge \Hel (X_K)$ and $\| H \|_{L^2} > \| K \|_{L^2}$, or $\Hel (X_H) > \Hel (X_K)$ and $\| H \|_{L^2} \ge \| K \|_{L^2}$, then $H \not\sim K$.
\end{cor}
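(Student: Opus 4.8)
The plan is to reduce everything to the helicity formula of Theorem~\ref{thm:helicity} together with the homotopy invariance of the mean value established in the preceding lemma. First I would rewrite the helicity so as to separate the mean-value contribution from the $L^2$-norm. Since $\| H \|_{L^2}^2 = \int_M H^2 \, \alpha \wedge d\alpha = c (H^2) \cdot \vol (M,\alpha \wedge d\alpha)$, the formula $\Hel (X_H) = (4 c^2 (H) - 3 c (H^2)) \cdot \vol (M,\alpha \wedge d\alpha)$ becomes
\[ \Hel (X_H) = 4 c^2 (H) \cdot \vol (M,\alpha \wedge d\alpha) - 3 \| H \|_{L^2}^2, \]
and similarly for $K$. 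The key observation is then that $H \sim K$ forces $c (H) = c (K)$ by the preceding lemma, so the quadratic mean-value terms cancel upon subtraction, leaving
\[ \Hel (X_H) - \Hel (X_K) = 3 ( \| K \|_{L^2}^2 - \| H \|_{L^2}^2 ), \]
which is exactly the asserted identity.

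The intermediate consequences are then purely formal manipulations of this single identity. Because its right-hand side is a nonnegative multiple of $\| K \|_{L^2}^2 - \| H \|_{L^2}^2$ and the $L^2$-norms are nonnegative, $\Hel (X_H) = \Hel (X_K)$ holds precisely when $\| H \|_{L^2} = \| K \|_{L^2}$. Monotonicity follows in the same way: $\Hel (X_H) \ge \Hel (X_K)$ is equivalent to $\| K \|_{L^2}^2 \ge \| H \|_{L^2}^2$, hence to $\| H \|_{L^2} \le \| K \|_{L^2}$, and strict inequality of the helicities translates into the strict reverse inequality of the norms.

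For the converse, which assumes only that the two isotopies share the same end point rather than the stronger relation $H \sim K$, I would argue by contraposition. If one \emph{did} have $H \sim K$, then the identity above would apply. In the first case the hypothesis $\| H \|_{L^2} > \| K \|_{L^2}$ would force $\Hel (X_H) - \Hel (X_K) = 3 ( \| K \|_{L^2}^2 - \| H \|_{L^2}^2 ) < 0$, contradicting $\Hel (X_H) \ge \Hel (X_K)$; in the second case $\| H \|_{L^2} \ge \| K \|_{L^2}$ would force $\Hel (X_H) - \Hel (X_K) \le 0$, contradicting $\Hel (X_H) > \Hel (X_K)$. Either contradiction yields $H \not\sim K$, as claimed.

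I do not anticipate a genuine obstacle: the whole argument is an algebraic rearrangement of the helicity formula followed by one cancellation. The single point carrying content is precisely that cancellation of the $4 c^2 (\cdot)$ terms, which is where the homotopy hypothesis enters through the equality $c (H) = c (K)$. Without homotopy invariance of the mean value the two helicities could differ by more than the defect of the $L^2$-norms, so this is the step on which the entire corollary rests.
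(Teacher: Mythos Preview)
Your proposal is correct and is exactly the intended argument: the paper states the corollary without proof because it follows immediately by subtracting the two instances of the helicity formula $\Hel(X_H)=4c^2(H)\cdot\vol(M)-3\|H\|_{L^2}^2$ and invoking the preceding lemma to cancel the mean-value terms. The remaining inequalities and the contrapositive are, as you say, purely formal.
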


For example, suppose $H \sim K$.
Then the helicity of the strictly contact vector field generating the composed isotopy $\Phi_H^{-1} \circ \Phi_K$ is $- 3 \| H - K \|_{L^2}^2 \le 0$.
If $X_H$ generates a loop and $\Hel (X_H) > 0$, the loop is not contractible.

\section{Suspensions of surface isotopies} \label{sec:suspensions}
In this section we improve, using different methods, results due to Gambaudo and Ghys~\cite{gambaudo:ea97}, relating the helicity to the Calabi invariant of surface isotopies.
Denote by $D^2 \subset \R^2$ the unit disk in the plane with polar coordinates $(r,\theta)$, where $0 \le r \le 1$, $0 \le \theta < 2 \pi$, and standard area (or symplectic) form $\omega = r \, dr \wedge d\theta$, and also consider the cylinder $D^2 \times \R$ with volume form $\omega \wedge dt$, where $t$ is the coordinate on the real line.
Let $\phi \in \Diff (D^2,\partial D^2,\omega)$ be an area preserving diffeomorphism that is the identity near the boundary $\partial D^2$ of the disk.
Consider the solid torus
	\[ T_\phi = D^2 \times \R \, / \, \{ (\phi^n (x), t) \sim (x, t + n) \mid n \in \Z \}, \]
with the induced volume form $\omega \wedge dt$, and denote by $p \colon D^2 \times \R \to T_\phi$ the canonical projection to the \emph{mapping torus} $T_\phi$.
The divergence-free vector field $\frac{\partial}{\partial t}$ projects to a divergence-free vector field $p_* (\frac{\partial}{\partial t}) = X (\phi)$, called the \emph{suspension} of the \emph{surface diffeomorphism} $\phi$ \cite{gambaudo:ea97}.
Clearly $\iota (X (\phi)) (\omega \wedge dt) = \omega = d\lambda$ for a one-form $\lambda$ on $D^2$, and the three-form $\lambda \wedge \omega$ vanishes for dimension reasons.
The situation becomes more interesting after embedding $T_\phi$ into standard $\R^3$.

Let $\{ \phi_t \}$ be an isotopy generated by a one-periodic smooth Hamiltonian function $F \colon D^2 \times \R \to \R$ that is compactly supported in the interior, with $\phi_0 = \id$ and time-one map $\phi_1 = \phi$.
This isotopy gives rise to a volume preserving embedding $T_\phi \to D^2 \times \R / \Z \hookrightarrow \R^3$ by composition of the map $S (\{ \phi_t \}) \colon T_\phi \to D^2 \times \R / \Z$ given by $(x,t) \mapsto (\phi_t (x),t)$ with an embedding $D^2 \times \R / \Z \hookrightarrow \R^3$ of the solid torus that is volume preserving with respect to the standard volume form $dV$ on $\R^3$.
An explicit (orientation preserving) embedding of $D^2 \times \R / \Z$ into $\R^3$ is given by
	\[ ((r,\theta),t) \mapsto ( (A + B r \cos \theta) \cos (2 \pi t), B r \sin \theta, (A + B r \cos \theta) \sin (2 \pi t) ), \]
which preserves total volume for an appropriate choice of constants $A > B > 0$.
By Moser's argument, it can be deformed to a volume preserving embedding $\tau$ (that preserves the boundary).
The vector field $X (\phi)$ defines a divergence-free vector field $S (\{ \phi_t \})_* (X (\phi)) = X (\{ \phi_t \}) = X + \frac{\partial}{\partial t}$ on the solid torus $D^2 \times \R / \Z$, where $X (x,t) = X_{F_t} (x)$ is the Hamiltonian vector field generating the isotopy $\{ \phi_t \}$.
We call $X (\{ \phi_t \})$ the \emph{suspension} of the \emph{surface isotopy} $\{ \phi_t \}$.
By identifying $D^2 \times \R / \Z$ with its image $\tau (D^2 \times \R / \Z)$ in $\R^3$, we can identity the vector fields $X (\{ \phi_t \})$ and $\tau_* X (\{ \phi_t \})$, and refer to the latter also as the suspension of the isotopy $\{ \phi_t \}$.

\begin{proof}[Proof of Theorem~\ref{thm:suspension}]
By restricting to differential forms on the torus $D^2 \times \R / \Z$ that are pull-backs of differential forms on the image of $\tau$ that extend to global differential forms on $\R^3$, the same argument as in the proof of Lemma~\ref{lem:change-vol-form} implies $\Hel (\tau_* X (\{ \phi_t \}),dV) = \Hel (X (\{ \phi_t \}),\omega \wedge dt)$.
On the other hand,
	\[ \iota_{X (\{ \phi_t \})} ( \omega \wedge dt ) = \iota_X \omega \wedge dt + \omega = dF_t \wedge dt + d \lambda = d (F_t \, dt + \lambda), \]
where $\lambda$ is a one-form on $D^2$ (that extends to $\R^2$) with $d\lambda = \omega$.
The primitive $F_t \, dt + \lambda$ extends to a global one-form on $\R^3$.
Moreover,
	\[ ( F_t \, dt + \lambda ) \wedge ( dF_t \wedge dt + \omega ) = F_t \, \omega \wedge dt - dF_t \wedge \lambda \wedge dt. \]
Since $F$ vanishes near the boundary, Stokes' theorem implies
	\[ 0 = \int_{D^2 \times \R / \Z} d (F_t \, \lambda \wedge dt) = \int_{D^2 \times \R / \Z} dF_t \wedge \lambda \wedge dt + F_t \, \omega \wedge dt, \]
and thus
	\[ \Hel (X (\{ \phi_t \}),\omega \wedge dt) = 2 \int_{D^2 \times \R / \Z} F_t \, \omega \wedge dt = 2 \int_0^1 \int_{D^2} F_t \, \omega \, dt = 2 \, \Cal (\phi). \qedhere\]
\end{proof}

\begin{exa}
Consider the solid torus $D^2 \times \R / \Z$ with coordinates $((r,\theta),t)$, where $0 \le r \le 1$, $0 \le \theta < 2 \pi$, and $0 \le t < 1$ (considering a disk of arbitrary radius corresponds to rescaling the area form).
Let $\rho \colon [0,1] \to \R$ be a smooth function that is identically zero near $r = 1$, and consider the area preserving diffeomorphism $\phi_\rho \colon D^2 \to D^2$ defined by $(r,\theta) \mapsto (r,\theta + \rho (r))$ for $r > 0$, and $\phi_\rho (0) = 0$, where $0$ denotes the origin in $\R^2$.
The suspension of the isotopy $\{ \phi_\rho^t \} = \{ \phi_{t \rho} \}$ is the vector field
	\[ X (\{ \phi_{t \rho} \}) = \rho (r) \frac{\partial}{\partial \theta} + \frac{\partial}{\partial t}, \]
cf.\ \cite[Section 1.4]{gambaudo:sac01}, and its generating Hamiltonian function is
	\[F(r,\theta) = \int_r^1 s \rho(s) \, ds, \]
\cite[Example 4.2]{mueller:ghh07}.
An easy computation shows
	\[ \iota_{X (\{ \phi_{t \rho} \})} (r \, dr \wedge d\theta \wedge dt) = d \left( \frac{1}{2} r^2 \, d\theta + \left( \int_r^1 s \rho (s) \, ds \right) dt \right) = d\beta_\rho. \]
Then
	\[ \beta_\rho \wedge d\beta_\rho = \left( \frac{1}{2} r^2 \rho (r) + \int_r^1 s \rho (s) \, ds \right) r \, dr \wedge d\theta \wedge dt. \]
Using integration by parts for the second summand, we find
	\[ \Hel (X (\{ \phi_{t \rho} \})) = \int_0^1 \int_0^{2 \pi} \int_0^1 \left( \frac{1}{2} r^3 \rho (r) + r \int_r^1 s \rho (s) \, ds \right) dr d\theta dt = 2 \pi \int_0^1 r^3 \rho (r) \, dr. \]
\end{exa}

Let $\phi_1$, $\phi_2 \in \Diff (D^2,\partial D^2,\omega)$ be two area preserving diffeomorphisms that are the identity near the boundary, and consider the corresponding solid tori $T_{\phi_1}$ and $T_{\phi_2}$.
Let $\{ \phi_1^t \}$ and $\{ \phi_2^t \}$ be Hamiltonian isotopies as above with $\phi_1^0 = \phi_2^0 = \id$, and time-one maps $\phi_1^1 = \phi_1$ and $\phi_2^1 = \phi_2$.
Again consider $X (\{ \phi_1^t \}) = X_1 + \frac{\partial}{\partial t}$ and $X (\{ \phi_2^t \}) = X_2 + \frac{\partial}{\partial t}$ as vector fields on $D^2 \times \R / \Z$, and embed the solid torus into the three-sphere by
	\[ \tau^1 \colon ( (r,\theta),t ) \mapsto \left( \frac{1}{2} \sin^{-1} r, \theta + 2 \pi t, 2 \pi t \right) =  (\eta, \xi_1,\xi_2) \in S^3 \]
(rather than the standard $( (r,\theta),t ) \mapsto (\frac{1}{2} \sin^{-1} r, \theta, 2 \pi t)$) and
	\[ \tau^2 \colon ( (r,\theta),t ) \mapsto \left( \frac{1}{2} (\pi - \sin^{-1} r), 2 \pi t, \theta + 2 \pi t \right) =  (\eta, \xi_1,\xi_2) \in S^3, \]
so that the vector fields $\tau_*^1 ( X (\{ \phi_1^t \}) )$ and $\tau_*^2 ( X (\{ \phi_2^t \}) )$ coincide along their common boundary $\{ \eta = \pi/4 \}$.
Here the image of $\tau^1$ is the solid torus $\{ \eta \le \pi/4 \}$ in $S^3$, and the image of $\tau^2$ is the solid torus $\{\eta \ge \pi/4 \}$.
Denote their sum by $X (\{ \phi_1^t \},\{ \phi_2^t \})$.
This divergence-free vector field on the three-sphere $S^3$ is called the \emph{double suspension} of $\{ \phi_1^t \}$ and $\{ \phi_2^t \}$.
Up to scaling, the volume form $\mu$ on $S^3$ obtained from gluing together the two copies of the solid torus is the standard one, and has total volume equal to $2 \pi$.

\begin{proof}[Proof of Theorem~\ref{thm:double-suspension}]
By Lemma~\ref{lem:change-vol-form},
	\[ \Hel (X (\{ \phi_1^t \},\{ \phi_2^t \});dV) = \Hel (X (\{ \phi_1^t \},\{ \phi_2^t \});\pi \mu) = \pi^2 \cdot \Hel (X (\{ \phi_1^t \},\{ \phi_2^t \});\mu). \]
By construction, $X (\{ \phi_1^t \},\{ \phi_2^t \}) = \tau_*^1 (X_1) + \tau_*^2 (X_2) + R_\alpha$, so that
	\[ \Hel (X (\{ \phi_1^t \},\{ \phi_2^t \})) = \Hel (\tau_*^1 (X_1) + \tau_*^2 (X_2)) + 2 \Rel (\tau_*^1 (X_1) + \tau_*^2 (X_2),R_\alpha) + \Hel (R_\alpha). \]
For the first term, we compute
\begin{align*}
	& \Hel (\tau_*^1 (X_1) + \tau_*^2 (X_2);\mu) \\
	& = \Hel (\tau_*^1 (X_1);\mu) + 2 \Rel (\tau_*^1 (X_1),\tau_*^2 (X_2);\mu) + \Hel (\tau_*^2 (X_2);\mu) \\
	& = \Hel (X_1,\omega \wedge dt) + \Hel (X_2,\omega \wedge dt) = 0,
\end{align*}
since $\tau_*^1 (X_1)$ and $\tau_*^2 (X_2)$ have disjoint supports on $S^3$, and by the last statement of Theorem~\ref{thm:suspension}.
Moreover,
\begin{align*}
	\Rel (\tau_*^1 (X_1) + \tau_*^2 (X_2),R_\alpha;\mu) &= \Rel (\tau_*^1 (X_1),R_\alpha;\mu) + \Rel (\tau_*^2 (X_2),R_\alpha;\mu) \\
	&= \Rel (X_1,\frac{\partial}{\partial t};\omega \wedge dt) + \Rel (X_2,\frac{\partial}{\partial t};\omega \wedge dt) \\
	&= \Cal (\phi_1) + \Cal (\phi_2).
\end{align*}
Since $\Hel (R_\alpha) = ( \vol (S^3) )^2$, combining all of the above proves the claim.
\end{proof}

\section{Continuous contact isotopies} \label{sec:topo-contact}
Let $(M,\xi)$ be a closed contact manifold equipped with a contact form $\alpha$.
A continuous isotopy $\Phi = \{ \phi_t \}$ in the group Homeo$(M)$ of homeomorphisms is a \emph{continuous strictly contact isotopy} if there exists a Cauchy sequence of smooth basic contact Hamiltonian functions $H_i \colon [0,1] \times M \to \R$, such that the sequence $\Phi_{H_i}$ of strictly contact isotopies converges uniformly to $\Phi$.
Here the norm \cite{banyaga:ugh11} used to define the metric on the space of contact Hamiltonian functions is
\begin{equation} \label{eqn:hofer-norm}
	\| H \| = \max_{0 \le t \le 1} \left( \max_{x \in M} H (t,x) - \min_{x \in M} H (t,x) + | c (H_t) | \right),
\end{equation}
which means the Cauchy sequence $H_i$ converges uniformly.
It is also possible to replace the maximum over $0 \le t \le 1$ by the integral over the interval $[0, 1]$, but we will restrict to the former case in this article.
For a detailed study of continuous contact isotopies and related notions, see \cite{banyaga:ugh11, mueller:gcd11}.

Suppose now $(M,\alpha)$ is regular.
Then the \emph{continuous contact Hamiltonian function} $H = \lim_i H_i$ associated to the continuous strictly contact isotopy $\Phi$ is unique \cite{banyaga:ugh11}.
In other words, if $H_i$ and $K_i$ are two Cauchy sequences of smooth basic contact Hamiltonian functions with $\lim_i \Phi_{H_i} = \Phi = \lim_i \Phi_{K_i}$, then we must have $\lim_i H_i = \lim_i K_i$.
Thus the limit $\Hel (\Phi)$ of the sequence $\Hel (X_{H_i})$ exists and does not depend on the sequence $H_i$ but only on the continuous strictly contact isotopy $\Phi$.
That proves Corollary~\ref{cor:helicity}, and that Definition~\ref{def:helicity} is well-defined.

We point out that the contact Hamiltonian functions $H_i$ are time-dependent in general, even if the limit $H$ is autonomous.
However, by our earlier remark the helicity of a time-dependent divergence-free vector field that is exact at all times $t$ is well-defined, and Definition~\ref{def:helicity} makes sense for any continuous strictly contact isotopy.

Conversely, a continuous strictly contact isotopy is uniquely determined by its continuous contact Hamiltonian function \cite{banyaga:ugh11}.
To be more precise, suppose two sequences of smooth basic contact Hamiltonian functions $H_i$ and $K_i$ generate two sequences $\Phi_{H_i}$ and $\Phi_{K_i}$ of uniformly convergent strictly contact isotopies.
If $\lim_i H_i = \lim_i K_i$, then $\lim_i \Phi_{H_i} = \lim_i \Phi_{K_i}$.
Denote the common limit by $\Phi$.
In the terminology of topological (strictly) contact dynamics, the continuous Hamiltonian function $H = \lim_i H_i$ `generates' the isotopy $\Phi$, and we write $\Phi_H = \Phi$.

By the above uniqueness theorems, the continuous strictly contact isotopy $\Phi_H$ is a one-parameter subgroup if and only if its continuous contact Hamiltonian function is autonomous.
Furthermore, a continuous contact Hamiltonian function $H$ is invariant under the Reeb flow, and we call $H$ a \emph{continuous basic function} on $M$ \cite{mueller:gcd11}.
In particular, there exists a unique function $F$ on $B$ such that $H = p^* F = F \circ p$.
This function $F$ is a continuous Hamiltonian function in the sense explained in the next section.
Moreover, the $S^1$-extension (\ref{eqn:s^1-ext}) extends to so called strictly contact homeomorphisms of $M$, i.e.\ time-one maps of continuous strictly contact isotopies, and Hamiltonian homeomorphisms of $B$ \cite{banyaga:ugh11}.
The latter were defined and studied in \cite{mueller:ghh07, mueller:ghl08, mueller:ghc08}.
See the next section for a brief summary.
Example~\ref{exa:lift} concerning horizontal lifts can be generalized verbatim to continuous Hamiltonian isotopies.

Suppose $\phi \in \Homeo (M)$ is the uniform limit $\phi = \lim_i \phi_i$ of a sequence of strictly contact diffeomorphisms, i.e.\ $\phi_i^* \alpha = \alpha$ for all $i$.
We denote the group of strictly contact diffeomorphisms by $\Diff (M,\alpha)$, and the group of limit homeomorphisms by $\overline{\Diff} (M,\alpha)$.
By rigidity, if a homeomorphism $\phi \in \overline{\Diff} (M,\alpha)$ is smooth, then $\phi \in \Diff (M,\alpha)$ \cite{mueller:gcd11}, justifying our notation.

\begin{proof}[Proof of Theorem~\ref{thm:strictly-contact-homeo-conjugation}]
Let $\Phi$ be a continuous strictly contact isotopy with continuous contact Hamiltonian function $H$, and $H_i$ be a Cauchy sequence with limit $H$ and $\Phi_{H_i} \to \Phi$ uniformly, whose existence is guaranteed by the definition of a continuous strictly contact isotopy.
Then the conjugated smooth isotopy $\phi_i^{-1} \circ \Phi_{H_i} \circ \phi_i$ has the smooth contact Hamiltonian function $H_i \circ \phi_i$, and moreover, it converges to $\phi^{-1} \circ \Phi \circ \phi$ uniformly, and $H_i \circ \phi_i$ converges to $H \circ \phi$ in the metric defined by (\ref{eqn:hofer-norm}) \cite{banyaga:ugh11}.
This extension of the usual transformation law provides further justification for our notation.
Since $\phi$ preserves the (measure induced by the) volume form $\alpha \wedge d\alpha$ on $M$, the change of variables formula shows that $\phi$ preserves the average value $c$ of a function on $M$.
Thus the following identities hold.
\begin{align*}
	\Hel (\phi \circ \Phi \circ \phi^{-1}) &= \left( 4 c^2 (H \circ \phi) - 3 c ( (H \circ \phi)^2 ) \right) \cdot \vol (M) \\
	&= \left( 4 c^2 (H) - 3 c (H^2) \right) \cdot \vol (M)\\
	&= \Hel (\Phi) \qedhere
\end{align*}
\end{proof}

\section{Continuous Hamiltonian isotopies} \label{sec:topo-ham}
We briefly recall the definition of a continuous Hamiltonian isotopy, which is similar to the case of a continuous contact isotopy discussed in the previous section.
See \cite{mueller:ghh07, mueller:ghl08, mueller:ghc08} for details.
Let $(B^{2n},\omega)$ be a closed and connected symplectic manifold.
Recall from Section~\ref{sec:contact} the one-one correspondence between Hamiltonian vector fields $X = X_F$, and smooth mean-value zero normalized functions $F$ on $M$, given by the relation $\iota_X \omega = dF$.
In this context, a smooth function on $M$ is generally referred to as a \emph{Hamiltonian} function.
Suppose $F_i \colon [0,1] \times B \to \R$ is a sequence of smooth mean value zero normalized time-dependent Hamiltonian functions, and denote by $\Phi_{F_i} = \{ \phi_{F_i}^t \}$ the sequence of smooth Hamiltonian isotopies corresponding to the Hamiltonian functions $F_i$, i.e.\ the isotopies generated by the vector fields $X_{F_i}$.
If $F_i$ is a Cauchy sequence with respect to the metric induced by the norm (\ref{eqn:hofer-norm}), and $\Phi_{F_i}$ converges uniformly to a continuous isotopy $\Phi = \{ \phi_t \}$ of homeomorphisms, then $\Phi$ is called a \emph{continuous Hamiltonian isotopy}, and the limit $F = \lim F_i$ is called a {\em continuous Hamiltonian function}.
The group of time-one maps of continuous Hamiltonian isotopies is denoted $\Hameo(B,\omega)$.
Note that we assume the Hamiltonian functions have mean value zero (with respect to the canonical volume form $\omega^n$), so that the term $c (F (t,\cdot))$ in (\ref{eqn:hofer-norm}) vanishes in the present situation.
It is again possible to work with the (weaker) norm obtained by replacing the maximum by the time average over $0 \le t \le 1$.
These definitions make sense for noncompact manifolds and manifolds with nonempty boundary, provided one considers only Hamiltonian functions that are compactly supported in the interior of $B$, and adjusts the definition accordingly.
In this case, the mean value of a Hamiltonian function need no longer vanish identically, it is instead normalized by the requirement of having compact support in the interior.

A continuous Hamiltonian isotopy is uniquely determined by its generating Hamiltonian function \cite{mueller:ghh07}.
That is, suppose that a sequence $\Phi_{F_i}$ of smooth Hamiltonian isotopies, generated by normalized smooth Hamiltonian functions $F_i$, converges uniformly to an isotopy $\Phi$ of homeomorphisms, and that the Hamiltonian functions converge to a continuous function $F$ uniformly.
If another such sequence $\Phi_{G_i}$ satisfies $\lim_i F_i = \lim_i G_i$, then $\lim_i \Phi_{F_i} = \lim_i \Phi_{G_i}$, and we denote $\Phi_F = \Phi$.

Conversely, L.~Buhovsky and S.~Seyfaddini \cite{buhovsky:ugh11} generalized (and simplified the proof of) a previous result by C.~Viterbo \cite{viterbo:ugh06} on the uniqueness of the `generating Hamiltonian' $F$.
That is, if $F_i \to F$ and $G_i \to G$, and the isotopies $\Phi_{F_i}$ and $\Phi_{G_i}$ have the same uniform limit $\Phi$, then $F = G$.
When $B = D^2$, the real number
\begin{equation} \label{eqn:calabi-topological}
	\int_0^1 \int_{D^2} F_t \, \omega \, dt
\end{equation}
is well-defined, and depends only on the continuous Hamiltonian isotopy $\Phi$.
If $\Phi$ is a smooth Hamiltonian isotopy, it equals the Calabi invariant of the time-one map of $\Phi$.
We thus call (\ref{eqn:calabi-topological}) the \emph{Calabi invariant} $\Cal (\Phi)$ of the isotopy $\Phi$, and note $\Cal (\Phi) = \lim_i \Cal (\Phi_{F_i})$ for a (and thus any) sequence of Hamiltonian isotopies converging to $\Phi$ in the sense of the definition of a continuous Hamiltonian isotopy.
Note that every smooth area-preserving isotopy of $(D^2,\partial D^2,\omega)$ is Hamiltonian.

Suppose $\Phi = \{ \phi_t \}$ is a continuous Hamiltonian isotopy, $F_i$ a Cauchy sequence in the sense explained above, and the smooth Hamiltonian isotopies $\Phi_{F_i}$ converge uniformly to $\Phi$.
Consider the suspensions $X (\Phi_{F_i})$ defined in Section~\ref{sec:suspensions}.
These do not necessarily converge to a (continuous) vector field on $D^2 \times \R / \Z$ as $i \to \infty$.
However, the flows (with time-$s$ maps) $(x,t) \mapsto (\phi_{F_i}^{t + s} \circ (\phi_{F_i}^t)^{-1} (x), t + s)$ converge uniformly to the flow $(x,t) \mapsto (\phi_{t + s} \circ \phi_t^{-1} (x),t + s)$, which we call the \emph{suspension} of $\Phi$.
As remarked above, $\Cal (\Phi_{F_i}) \to \Cal (\Phi)$, independently of the choice of sequence $F_i$ in Definition~\ref{def:helicity-twice-calabi}.
That proves Corollary~\ref{cor:helicity-twice-calabi}, shows that Definition~\ref{def:helicity-twice-calabi} is well-defined, and extends the definition in the case of a smooth Hamiltonian isotopy.
However, recall again that in general the helicity is not $C^0$-continuous with respect to the isotopy $\Phi$; if a sequence of isotopies $\Phi_i$ converges only uniformly to $\Phi$, their helicities need not converge.

The group $\Sympeo (M,\omega)$ of symplectic homeomorphisms is by definition the $C^0$-closure of the group $\Symp (M,\omega) = \{ \phi \in \Diff(M) \mid \phi^*\omega = \omega\}$ of symplectic diffeomorphisms in $\Homeo (M)$ \cite{mueller:ghh07}.
The usual transformation law continues to hold for homeomorphisms, i.e.\ $\phi^{-1} \circ \Phi \circ \phi = \{ \phi^{-1} \circ \phi_t \circ \phi \}$ has continuous Hamiltonian function $F \circ \phi$ for any continuous Hamiltonian isotopy $\Phi = \{ \phi_t \}$ with continuous Hamiltonian function $F$, and symplectic homeomorphism $\phi$.
Again by rigidity, an element of $\Sympeo (M,\omega)$ that is smooth belongs to $\Symp (M,\omega)$, which together with the transformation law justifies our notation.

The Calabi invariant of $\Phi$ is conjugation-invariant by area preserving diffeomorphisms of the two-disk.
Any area preserving homeomorphism can be approximated uniformly by diffeomorphisms { \cite{munkres:osp59, munkres:osp60, munkres:hos65, hirsch:ots63}}, and thus by area preserving (or symplectic) diffeomorphisms \cite{oh:cmp06, sikorav:avp07}.
Therefore the Calabi invariant (\ref{eqn:calabi-topological}) of a (smooth or continuous) Hamiltonian isotopy is invariant under conjugation by any area preserving homeomorphism of the two-disk.
For smooth isotopies, this was known to Gambaudo and Ghys, see \cite{gambaudo:ea97} for a different proof.

\begin{proof}[Proof of Theorem~\ref{theo:suspension-conjugation}]
Denote the suspensions by ${\tilde \phi}^s (x,t) = (\phi_{t + s} \circ \phi_t^{-1} (x),t + s)$ and ${\tilde \psi}^s (x,t) = (\psi_{t + s} \circ \psi_t^{-1} (x),t + s)$, and write ${\tilde \varphi} (x,t) = (\varphi (x),t)$.
By hypothesis,
	\[ {\tilde \psi}^s (x,t) = {\tilde \varphi} \circ {\tilde \phi}^s \circ {\tilde \varphi}^{-1} (x,t) = (\varphi \circ (\phi_{t + s} \circ \phi_t^{-1}) \circ \varphi^{-1} (x),t + s). \]
The right-hand side is the suspension of the isotopy $\varphi \circ \Phi \circ \varphi^{-1} = \{ \varphi \circ \phi_t \circ \varphi^{-1} \}$, and thus has helicity $2 \Cal (\varphi \circ \Phi \circ \varphi^{-1}) = 2 \Cal (\Phi)$ by conjugation invariance, which in turn equals the helicity of the suspension of $\Phi$.
\end{proof}

\section{Topologically conjugate diffeomorphisms}\label{sec:conjugate-diffeomorphisms}
Recall Arnold's first question presented in the introduction.
Suppose (the volume preserving isotopies generated by) two smooth exact divergence-free vector fields $X$ and $Y$ are topologically conjugate.
That means there exists a (volume preserving) homeomorphism $\psi$ such that $\{ \psi \circ \phi_X^t \circ \psi^{-1} \} = \{ \phi_Y^t \}$.
If $\psi$ is a $C^1$-diffeomorphism, this is equivalent to $\psi_* X = Y$, and it is easy to see that the helicities of $X$ and $Y$ coincide (Lemma~\ref{lem:change-vol-form}).
Does this identity hold in general, even if $\psi$ is not a $C^1$-diffeomorphism, and thus $\psi_* X$ is not well-defined?
Theorem~\ref{thm:strictly-contact-homeo-conjugation} and Theorem~\ref{theo:suspension-conjugation} provide positive answers in two particular cases coming from the contact geometry of regular contact three-manifolds, and the symplectic geometry of surfaces.
The discussion in this section and the next is intended to illustrate these results.

The proof of the following algebraic lemma is trivial.
For $G$ a group, we denote by $Z_g = \{ c \in G \mid g c = c g \}$ the centralizer of $g \in G$.

\begin{lemma}
If for $a, b, c$ elements of some group $G$ we have $c a c^{-1} = b$, then $d a d^{-1} = b$ if and only if $d \in c \cdot Z_a = Z_b \cdot c$.
\end{lemma}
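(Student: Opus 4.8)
The plan is to verify both equalities directly from the defining relation $c a c^{-1} = b$, since the argument is purely conjugation bookkeeping and no deeper input is needed.

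First I would establish the characterization $d a d^{-1} = b \iff d \in c \cdot Z_a$. For the forward direction, suppose $d a d^{-1} = b = c a c^{-1}$; rearranging gives $(c^{-1} d)\, a\, (c^{-1} d)^{-1} = a$, so that $c^{-1} d \in Z_a$, i.e.\ $d \in c \cdot Z_a$. Conversely, if $d = c z$ with $z \in Z_a$, then $d a d^{-1} = c (z a z^{-1}) c^{-1} = c a c^{-1} = b$, using that $z$ commutes with $a$. This already gives the substance of the statement.

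The remaining task is the set equality $c \cdot Z_a = Z_b \cdot c$, which I would reduce to the observation that conjugation by $c$ carries $Z_a$ onto $Z_b$, that is, $c\, Z_a\, c^{-1} = Z_b$. Indeed, if $z \in Z_a$ then $c z c^{-1}$ commutes with $c a c^{-1} = b$, so $c z c^{-1} \in Z_b$; applying the same reasoning with $c^{-1}$ in place of $c$ and $b$ in place of $a$ (using $c^{-1} b c = a$) yields the reverse inclusion. Multiplying the identity $c\, Z_a\, c^{-1} = Z_b$ on the right by $c$ then gives $c\, Z_a = Z_b\, c$, completing the proof.

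No step presents a genuine obstacle, as the authors indicate; the only point requiring care is consistency of the conjugation conventions, so that the relation $c a c^{-1} = b$ (and not its inverse) is invoked in the correct direction at each stage.
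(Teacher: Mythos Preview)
Your argument is correct and complete. The paper itself declares the proof trivial and omits it entirely, so your careful verification of both the biconditional and the set equality $c\cdot Z_a = Z_b\cdot c$ is fully consistent with (and more detailed than) the paper's treatment.
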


The lemma applied to the group $G = \Homeo (M)$ says if $\psi \circ \phi \circ \psi^{-1} = \varphi$, then in general $\psi$ is not unique.
Note that it is trivial to produce examples of two diffeomorphisms or isotopies of diffeomorphisms that are topologically conjugate.
For example, if a diffeomorphism $\phi$ equals the identity on some open set $U$, and $\psi$ is a diffeomorphism outside an open set $V \subset U$ but non-smooth inside $V$, then $\psi \circ \phi \circ \psi^{-1}$ is a diffeomorphism.
Similar examples can be constructed if $\phi$ is the identity on some factor of a product manifold.
The actual problem is to find examples of topologically conjugate diffeomorphisms or isotopies of diffeomorphisms that are not conjugated by a diffeomorphism.
Indeed it appears to be quite rare a situation that two diffeomorphisms are conjugated by a homeomorphism but not a $C^1$-diffeomorphism.\footnote{We thank \'E.~Ghys for making this observation during a private conversation at Edifest, ETH Z\"urich in November 2010, and for making us aware of Furstenberg's example.}

The following construction is due to Furstenberg~\cite{furstenberg:set61, rouhani:ftt90}.
Let $\theta$ be an irrational number, $d$ an integer, and $f$ a smooth function on $S^1$.
A {\em Furstenberg transformation} $\phi_{\theta,d,f}$ is a diffeomorphism of $T^2$ defined by
	\[ \phi_{\theta,d,f} (x, y) = (x e^{2 \pi i \theta}, x^d y e^{2 \pi i f (x)}). \]
Furstenberg transformations are always area preserving (with respect to the standard area form $dx \wedge dy$), and {\em minimal} (that is, every orbit is dense in the torus $T^2$) provided $d$ is nonzero \cite{furstenberg:set61, rouhani:ftt90}.
The following lemma is essentially contained in \cite{kodaka:aft95}.

\begin{lemma} \label{lem:kodaka}
Let $\theta$ be an irrational number, $d$ a nonzero integer, and $f$ a smooth function on $S^1$.
Consider the Furstenberg transformations $\phi_{\theta,d,f}$ and $\phi_{\theta,d,0}$ of $T^2$.
There exists a continuous map $\psi \colon T^2 \to T^2$ that satisfies the identity $\psi \circ \phi_{\theta,d,f} = \phi_{\theta,d,0} \circ \psi$ if and only if $f$ can be split with respect to the circle action $x \mapsto e^{2 \pi i \theta} x$ on $S^1$, i.e.\ there exists a continuous function $g$ on $S^1$ that satisfies the equation $g (x) - g (e^{2 \pi i \theta} x) = f (x) - \eta$, where $\eta$ denotes the average value of $f$ (with respect to the measure induced by $dx$).
In that case,
	\[ \psi (x,y) = \left( x e^{2 \pi i \frac{m \theta + \eta + k}{d}}, x^m y e^{2 \pi i g (x)} \right), \]
where $m$ and $k$ are integers.
In particular, $\psi$ is (a posteriori) an area preserving homeomorphism, and $\phi_{\theta,d,f}$ is topologically conjugate to $\phi_{\theta,d,0}$.
Moreover, the function $g$ is unique up to (adding) a real constant.
Thus if $f$ is smooth and $g$ is not $C^1$, then $\phi_{\theta,d,f}$ is topologically conjugate but not $C^1$-conjugate to $\phi_{\theta,d,0}$.
\end{lemma}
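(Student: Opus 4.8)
The plan is to prove both implications by a direct analysis of the skew-product structure, writing points of $T^2=S^1\times S^1$ in angle coordinates $x=e^{2\pi i s}$, $y=e^{2\pi i r}$, so that $\phi_{\theta,d,f}$ lifts to $(s,r)\mapsto(s+\theta,\,r+ds+\tilde f(s))$ with $\tilde f(s)=f(e^{2\pi i s})$, and $\phi_{\theta,d,0}$ to $(s,r)\mapsto(s+\theta,\,r+ds)$; write $R_\theta(x)=e^{2\pi i\theta}x$ for the base rotation. For the \emph{if} direction I would take the candidate $\psi(x,y)=(xe^{2\pi ic},x^my\,e^{2\pi ig(x)})$ and verify the intertwining relation by a one-line computation: the first coordinates of $\psi\circ\phi_{\theta,d,f}$ and $\phi_{\theta,d,0}\circ\psi$ agree automatically, while equality of the second coordinates reduces, after cancelling $x^{m+d}y$, to the congruence $m\theta+f(x)+g(R_\theta x)\equiv dc+g(x)\pmod{\Z}$. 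The splitting hypothesis $g(x)-g(R_\theta x)=f(x)-\eta$ turns this into $dc\equiv m\theta+\eta$, solved exactly by $c=(m\theta+\eta+k)/d$; this is where the displayed value of $c$ originates. That $\psi$ is an area preserving homeomorphism is then immediate: it is a continuous bijection with explicit continuous inverse, and in the coordinates $(s,r)$ it is the composition of a translation, the unipotent shear $(s,r)\mapsto(s,ms+r)$, and the fiber translation $(s,r)\mapsto(s,r+g)$, each of which preserves $ds\wedge dr$, hence $dx\wedge dy$.

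The substance is the \emph{only if} direction. Suppose a continuous $\psi$ satisfies the conjugacy. First I would note that $\psi$ is automatically surjective, since $\psi(T^2)$ is a nonempty compact $\phi_{\theta,d,0}$-invariant set, hence all of $T^2$ by minimality of $\phi_{\theta,d,0}$ (valid because $d\neq0$). Next I record the induced map $A=\left(\begin{smallmatrix}a_1&b_1\\a_2&b_2\end{smallmatrix}\right)$ on $H_1(T^2;\Z)=\Z^2$; both $\phi_{\theta,d,f}$ and $\phi_{\theta,d,0}$ act on homology by the same matrix $N=\left(\begin{smallmatrix}1&0\\d&1\end{smallmatrix}\right)$, so the conjugacy forces $AN=NA$, and since $d\neq0$ this gives $b_1=0$ and $b_2=a_1$. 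Writing the first coordinate of $\psi$ as $\Psi_1(s,r)=a_1s+p(s,r)$ with $p\colon T^2\to\R$ continuous, the first-coordinate part of the intertwining relation becomes $p\circ\phi_{\theta,d,f}-p\equiv\theta(1-a_1)\pmod{\Z}$. Because the left-hand side is a genuine continuous real function on the connected space $T^2$ that is congruent to a constant mod $\Z$, it \emph{equals} that constant up to a fixed integer; integrating against the $\phi_{\theta,d,f}$-invariant Lebesgue measure kills the left-hand side and leaves $\theta(1-a_1)\in\Z$, whence $a_1=1$ by irrationality of $\theta$. Then $p\circ\phi_{\theta,d,f}=p$, so $p$ is constant by minimality, and the first coordinate of $\psi$ is exactly $xe^{2\pi ic}$.

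With $a_1=1$ and $b_2=1$, I would write the second coordinate as $\Psi_2(s,r)=ms+r+q(s,r)$ with $m=a_2$ and $q\colon T^2\to\R$ continuous, and extract from the second-coordinate part of the conjugacy the relation $q\circ\phi_{\theta,d,f}-q\equiv dc-m\theta-\tilde f(s)\pmod{\Z}$. As before both sides are honest continuous real functions, so they differ by a constant integer $n_0$, giving $q(\phi_{\theta,d,f}(s,r))-q(s,r)=dc-m\theta-\tilde f(s)+n_0$. Averaging this identity over the fiber variable $r\in[0,1]$ and using that $\phi_{\theta,d,f}$ acts on each fiber by a translation, I obtain, for $Q(s):=\int_0^1 q(s,r)\,dr$, the equation $Q(s)-Q(s+\theta)=\tilde f(s)-\big(dc-m\theta+n_0\big)$. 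Integrating once more in $s$ forces the constant to equal $\eta$, that is $dc=m\theta+\eta-n_0$ (so $c=(m\theta+\eta+k)/d$ with $k=-n_0$), and leaves precisely $Q(s)-Q(s+\theta)=\tilde f(s)-\eta$; thus $g:=Q$ is a continuous splitting of $f$. To recover the full form of $\psi$ I would set $\rho:=q-Q$ (of vanishing fiber-average) and subtract the averaged equation from the pointwise one to get $\rho\circ\phi_{\theta,d,f}=\rho$; minimality makes $\rho$ constant, and its vanishing fiber-average forces $\rho\equiv0$, so $q(s,r)=g(s)$ depends only on $x$ and $\psi$ has the asserted form.

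It remains to address uniqueness and the rigidity conclusion. If $g_1,g_2$ both split $f$, their difference is $R_\theta$-invariant and continuous, hence constant by minimality of the irrational rotation, giving uniqueness up to an additive real constant. For the last assertion, any $C^1$-conjugacy between $\phi_{\theta,d,f}$ and $\phi_{\theta,d,0}$ is in particular a continuous conjugacy, so by the classification just obtained it must have the form $(x,y)\mapsto(xe^{2\pi ic},x^my\,e^{2\pi ig(x)})$ with $g$ a splitting function; reading off $e^{2\pi ig(x)}$ from the (now $C^1$) second coordinate shows $g\in C^1$, contradicting the hypothesis that the splitting function—well-defined up to a constant and hence of fixed differentiability class—is not $C^1$. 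Therefore the two maps are topologically but not $C^1$-conjugate. I expect the main obstacle to be exactly the only-if direction: the delicate points are the passage from the two mod-$\Z$ intertwining equations to genuine real coboundary equations (justified by connectedness) and the repeated use of minimality and unique ergodicity of the Furstenberg maps, first to pin down the winding number $a_1=1$ and then, via fiber averaging, to eliminate the fiber dependence of $q$.
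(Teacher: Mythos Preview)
Your argument is correct and follows the same overall strategy as the paper: lift $\psi$ to the universal cover, read off the induced map on $H_1$ to constrain the winding numbers, and then use minimality of the Furstenberg maps (together with irrationality of $\theta$) to force the first coordinate to be a pure rotation and the second to have the displayed form. The paper compresses most of the ``only if'' direction into a citation of Kodaka and then, to identify the function $F(x,y)$ appearing in the second coordinate with a function of $x$ alone, compares it with $G(x,y)=g(x)$ and invokes minimality.

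Your treatment is more self-contained in one respect that is worth noting: you explicitly produce the splitting function by averaging $q$ over the fiber, obtaining $Q(s)=\int_0^1 q(s,r)\,dr$ and verifying directly that $Q(s)-Q(s+\theta)=\tilde f(s)-\eta$. This cleanly separates the existence of the splitting (the ``only if'' direction proper) from the classification of all conjugacies, whereas the paper's presentation appears to presuppose a splitting $g$ in order to conclude $F(x,y)=g(x)+c$. Your use of the invariant Lebesgue measure to force $a_1=1$, and of connectedness to pass from congruences mod~$\Z$ to genuine real equations, makes explicit steps that the paper leaves inside the Kodaka reference. Otherwise the two arguments coincide.
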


\begin{proof}
The `if' part is a straightforward computation.
For the `only if' part, suppose $g_1$ and $g_2$ are continuous functions with $g_i (x) - g_i (e^{2 \pi i \theta} x) = f (x) - \eta$ for $i = 1, 2$.
Their difference then satisfies $(g_1 - g_2) (x) = (g_1 - g_2) (e^{2 \pi i \theta} x)$ with $\theta$ irrational, and by continuity, $g_1 - g_2$ is constant.
By the homotopy lifting theorem we may write
	\[ \psi (x,y) = (x^{m_1} y^{n_1} e^{2 \pi i F_1 (x,y)}, x^{m_2} y^{n_2} e^{2 \pi i F_2 (x,y)}), \]
for integers $m_1, n_1, m_2, n_2$, and smooth functions $F_1, F_2$ on the torus.
Calculating explicitly $\psi \circ \phi_{\theta,d,f} = \phi_{\theta,d,0} \circ \psi$, and using that $\theta$ is irrational and $\phi_f$ is area preserving and minimal yields
	\[ \psi (x,y) = \left( x e^{2 \pi i \frac{m \theta + \eta + k}{d}}, x^m y e^{2 \pi i F (x,y)} \right), \]
where $F (x,y) - F (\phi_f (x,y)) = f (x) - \eta$ and $F \colon T^2 \to \R$ is a continuous function \cite{kodaka:aft95}.\footnote{Kodaka gave the proof in the case $d= 1$, but the general case is proved verbatim.
He moreover assumed that a priori $\psi$ is a homeomorphism; however this is not necessary.}
The function $G$ defined by $G (x,y) = g (x)$ also satisfies the equality $G (x,y) - G (\phi_f (x,y)) = f (x) - \eta$.
Subtracting shows the continuous function $F - G$ is constant since $\phi_{\theta,d,f}$ is minimal.
Then $F (x,y) = g (x) + c$ as claimed.
This shows the map $\psi$ has the required form, and it is easy to see that it is injective and surjective, and thus a homeomorphism (since $T^2$ is compact and Hausdorff).
\end{proof}

\begin{exa} {\cite{furstenberg:set61}}
For an irrational number $\theta$, choose a sequence of integers $n_k \ge 2^k$,  $k \ge 1$, such that $0 < n_k \theta - [ n_k \theta ] \le 2^{- n_k}$, where $[x]$ as usual denotes the greatest integer less than or equal to $x$.\footnote{Furstenberg originally constructed only one such number $\theta$, but by Poincar\'e's recurrence theorem, this can be done for any irrational number $\theta$.}
Define $n_k = - n_{- k}$ for $k < 0$.
Then the real function $f \colon S^1 \to \R$ defined by
	\[ f (e^{2 \pi i t}) = \sum_{k \not= 0} \frac{1}{k^2} \left( 1 - e^{2 \pi i n_k \theta} \right) \, e^{2 \pi i n_k t}, \]
is a smooth function on $S^1$ with mean value zero.
Now define the real function $g \colon S^1 \to \R$ by
	\[ g (e^{2 \pi i t}) = \sum_{k \not= 0} \frac{1}{k^2} e^{2 \pi i n_k t}, \]
which is continuous in $x \in S^1$, but not $C^1$-smooth.
It is immediate to check that $g (x) - g (e^{2 \pi i \theta} x) = f (x)$.
\end{exa}

A necessary and sufficient criterion for when a function can be split (with respect to some minimal homeomorphism) was proved in \cite[page 135]{gottschalk:td55}, see also \cite{rouhani:ftt90, kodaka:aft95}.
Examples of such functions are most easily constructed as above using Fourier series \cite[Lemma 2.1]{kodaka:tsc95} and Plancherel's theorem, where the number $\eta$ is the coefficient of the constant term.

The following proposition generalizes the preceding example (in which $M$ is void) to trivial $T^2$-bundles of any dimension.

\begin{prop} \label{pro:trivial-bundles}
For $M$ a smooth manifold, there exist pairs of diffeomorphisms of $M \times T^2$ that are conjugated by a homeomorphism but not by any $C^1$-diffeomorphism.
If moreover $M$ supports a volume form $\mu$, and $M \times T^2$ is equipped with the product volume form $\mu \wedge dx \wedge dy$, then there are pairs of diffeomorphisms as above which in addition are volume preserving, and the conjugating homeomorphism may also be chosen to preserve volume.
\end{prop}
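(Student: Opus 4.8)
The plan is to realize the examples as products of the identity on $M$ with Furstenberg's transformations, so that the failure of smooth conjugacy is inherited from the two-dimensional case via the minimal dynamics on the torus factor. First I would fix an irrational $\theta$, a nonzero integer $d$ (say $d = 1$), and the smooth function $f$ on $S^1$ built in the example above, whose splitting function $g$ is continuous but not $C^1$. By Lemma~\ref{lem:kodaka} the Furstenberg transformations $\phi_{\theta,d,f}$ and $\phi_{\theta,d,0}$ of $T^2$ are then conjugate through an area preserving homeomorphism $\psi$, but not through any $C^1$-diffeomorphism. I set $\Phi = \id_M \times \phi_{\theta,d,f}$ and $\Psi = \id_M \times \phi_{\theta,d,0}$ on $M \times T^2$; the homeomorphism $\id_M \times \psi$ visibly conjugates $\Phi$ to $\Psi$, so the two are topologically conjugate.

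The heart of the argument is to rule out \emph{any} $C^1$-conjugacy. Here I would exploit that, because $d \neq 0$, the transformation $\phi_{\theta,d,0}$ is minimal, so for every $(m,z) \in M \times T^2$ the closure of the $\Phi$-orbit of $(m,z)$ is exactly the slice $\{m\} \times T^2$, and the same holds for $\Psi$. Since a conjugating homeomorphism carries orbit closures to orbit closures, any conjugacy $\Theta$ of $\Phi$ with $\Psi$ must permute these slices, i.e.\ $\Theta(\{m\} \times T^2) = \{h(m)\} \times T^2$ for a homeomorphism $h$ of $M$; equivalently $\Theta$ has the skew form $\Theta(m,z) = (h(m), \Theta_m(z))$ with each $\Theta_m$ a homeomorphism of $T^2$.

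Now suppose for contradiction that $\Theta$ is a $C^1$-diffeomorphism. As the restriction of $\Theta$ to the embedded submanifold $\{m\} \times T^2$, with image in the embedded submanifold $\{h(m)\} \times T^2$, each $\Theta_m$ is then a $C^1$-diffeomorphism of $T^2$. Reading the relation $\Theta \circ \Phi = \Psi \circ \Theta$ on the slice over $m$ gives $\Theta_m \circ \phi_{\theta,d,f} = \phi_{\theta,d,0} \circ \Theta_m$, so $\Theta_m$ is a $C^1$-conjugacy between the two Furstenberg transformations. By Lemma~\ref{lem:kodaka} every continuous such conjugacy is built from the function $g$ (unique up to an additive constant), which is not $C^1$; this contradiction completes the nonexistence of a smooth conjugacy. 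The main obstacle is precisely this reduction to the invariant tori: once minimality forces the product structure of $\Theta$, the non-smoothness is read off directly from the planar example.

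Finally, for the volume preserving refinement I would keep the same $\Phi$, $\Psi$, and conjugacy. Since $\phi_{\theta,d,f}$ and $\phi_{\theta,d,0}$ preserve $dx \wedge dy$ while $\id_M$ preserves $\mu$, the maps $\Phi$ and $\Psi$ preserve the product form $\mu \wedge dx \wedge dy$; and because $\psi$ is area preserving by Lemma~\ref{lem:kodaka}, the conjugating homeomorphism $\id_M \times \psi$ preserves $\mu \wedge dx \wedge dy$ as well. As the absence of a $C^1$-conjugacy \emph{a fortiori} forbids a volume preserving one, the argument above applies verbatim.
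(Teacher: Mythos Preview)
Your argument is correct. The existence part and the volume-preserving refinement match the paper exactly. For the nonexistence of a $C^1$-conjugacy, you take a slightly different route: you use minimality of the Furstenberg transformation to identify orbit closures with the torus slices $\{m\}\times T^2$, forcing any conjugacy $\Theta$ into the skew form $\Theta(m,z)=(h(m),\Theta_m(z))$, and then apply Lemma~\ref{lem:kodaka} to the $C^1$-diffeomorphism $\Theta_m$. The paper's primary argument is more direct: writing an arbitrary $C^1$-conjugacy as $\psi=(\psi_1,\psi_2)$ with $\psi_2\colon M\times T^2\to T^2$, the second component of the conjugation relation on each slice already gives $\psi_2^p\circ\phi_{\theta,d,f}=\phi_{\theta,d,0}\circ\psi_2^p$, and Lemma~\ref{lem:kodaka} (stated for continuous \emph{maps}, not just homeomorphisms) rules out smoothness of $\psi_2^p$ without first establishing that $\psi$ respects the slices. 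The paper in fact sketches your minimality-based reduction as an alternative right after the proof, and observes that the direct argument yields the marginally stronger conclusion that no $C^1$-\emph{map} can intertwine the two diffeomorphisms.
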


\begin{proof}
A straightforward computation shows that
	\[ (\id \times \psi) \circ (\id \times \phi_{\theta,d,f}) = (\id \times \phi_{\theta,d,0}) \circ (\id \times \psi), \]
where $\phi_{\theta,d,f}$, $\phi_{\theta,d,0}$, and $\psi$ are as in the example above.
We may in fact replace the second and third identity map by any homeomorphism $\phi$ of $M$ here and in the argument below, and also choose $g (p,x) = g (x) + c (p)$ for a continuous function $c$ on $M$.
That proves the existence part of the proposition.

Let $\psi = \psi_1 \times \psi_2 \colon M \times T^2 \to M \times T^2$ be a $C^1$-diffeomorphism, where the factors $\psi_1 \colon M \times T^2 \to M$ and $\psi_2 \colon M \times T^2 \to T^2$ are both $C^1$-smooth maps, and assume that $\psi \circ (\id \times \phi_{\theta,d,f}) = (\id \times \phi_{\theta,d,0}) \circ \psi$.
For fixed $p \in M$, consider the restriction $\psi_2^p = \psi_2 (p,\cdot)$ of $\psi_2$ to the fiber over $p$.
By a routine computation, we have $\psi_2^p \circ \phi_{\theta,d,f} = \phi_{\theta,d,0} \circ \psi_2^p$.
But by Lemma~\ref{lem:kodaka}, $\psi_2^p$ cannot be $C^1$-smooth, a contradiction.
\end{proof}

Alternatively, the identity $\psi \circ (\id \times \phi_{\theta,d,f}) = (\id \times \phi_{\theta,d,0}) \circ \psi$ also implies $\psi_1^p = \psi_1^p \circ \phi_{\theta,d,f}$, where $\psi_1^p \colon T^2 \to M$ is the restriction of $\psi_1$ to the fiber over $p$.
Since $\phi_{\theta,d,f}$ is minimal ($d \not= 0$), this implies $\psi_1$ only depends on $p$ but not on $(x, y) \in T^2$.
Thus for fixed $p \in M$, the map $\psi_2^p$ is a local diffeomorphism.
Its image is open and closed, and therefore $\psi_2^p$ is surjective.
Since it is also injective, it is a diffeomorphism of $T^2$, and we may proceed as above to derive a contradiction.
However, the proof given above shows there does not even exist a $C^1$-smooth \emph{map} $\psi$ such that $\psi \circ (\id \times \phi_{\theta,d,f}) = (\id \times \phi_{\theta,d,0}) \circ \psi$.
The following result can be proved similarly to Lemma~\ref{lem:kodaka}.

\begin{lemma}
If $M$ is connected and simply-connected, or $M = T^n$ in Proposition~\ref{pro:trivial-bundles}, then the homeomorphism $\psi$ conjugating $\id \times \phi_{\theta,d,f}$ and $\id \times \phi_{\theta,d,0}$ is of the form
	\[ \psi (p,(x,y)) = \left( \psi_1 (p), (x e^{2 \pi i \frac{m \theta + \eta + k}{d}}, x^m y e^{2 \pi i (g (x) + c (p))}) \right) \]
respectively
	\[ \psi (p,(x,y)) = \left( \psi_1 (p), (x e^{2 \pi i \frac{m \theta + \eta + k}{d}}, x^m y e^{2 \pi i (g (x) + c (p))} p_1^{q_1} \cdots p_n^{q_n}) \right), \]
for a homeomorphism $\psi_1$ of $M$, integers $m$, $k$, and $q_1, \ldots, q_n$, and a continuous function $c$ on $M$.
$\psi$ is volume preserving (with respect to a product volume form on $M \times T^2$) if and only if $\psi_1$ is.
\end{lemma}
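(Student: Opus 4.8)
The plan is to run the computation of Lemma~\ref{lem:kodaka} with the base point $p \in M$ carried along as an extra parameter. Write $\psi = (\psi_1,\psi_2)$ with $\psi_1 \colon M \times T^2 \to M$ and $\psi_2 \colon M \times T^2 \to T^2$, and compare components in the conjugation identity $\psi \circ (\id \times \phi_{\theta,d,f}) = (\id \times \phi_{\theta,d,0}) \circ \psi$. The $M$-components give $\psi_1(p,\phi_{\theta,d,f}(x,y)) = \psi_1(p,(x,y))$; since $d \ne 0$, the map $\phi_{\theta,d,f}$ is minimal, so for each fixed $p$ the continuous function $\psi_1(p,\cdot)$ is constant along a dense orbit and hence constant. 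Thus $\psi_1 = \psi_1(p)$ depends only on $p$, and since $\psi$ is a homeomorphism with fiberwise homeomorphic restrictions, $\psi_1$ is then a homeomorphism of $M$.

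It remains to determine $\psi_2$. Applying the homotopy lifting theorem to $\psi_2$, I would write, in the simply-connected case,
\[ \psi_2(p,(x,y)) = \left( x^{m_1} y^{n_1} e^{2 \pi i F_1(p,x,y)}, x^{m_2} y^{n_2} e^{2 \pi i F_2(p,x,y)} \right), \]
and in the case $M = T^n$ the same expression with extra winding factors $p_1^{a_1}\cdots p_n^{a_n}$ and $p_1^{q_1}\cdots p_n^{q_n}$ in the first and second coordinate respectively, as permitted by $H^1(T^n \times T^2;\Z) \cong \Z^{n+2}$; for simply-connected $M$ one has $H^1(M \times T^2;\Z) \cong \Z^2$, which forbids any winding around $M$. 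Imposing the conjugation relation and repeating the manipulations of Lemma~\ref{lem:kodaka} --- well-definedness of the $F_i$ on the torus to pin down the winding numbers, irrationality of $\theta$ together with the two-dimensional minimality of $\phi_{\theta,d,f}$, and averaging of the resulting coboundary equation over the $\phi_{\theta,d,f}$-invariant measure --- forces $n_1 = 0$, $m_1 = 1$, $m_2 = m$, $n_2 = 1$, identifies the first phase as the constant $\tfrac{m\theta + \eta + k}{d}$, and yields $F_2(p,x,y) = g(x) + c(p)$, where $g$ is the splitting function of Lemma~\ref{lem:kodaka} (unique up to an additive constant) and $c$ is continuous by continuity of $\psi$ in $p$.

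The genuinely new points, compared with Lemma~\ref{lem:kodaka}, are the two involving the base directions, and this is where I expect the main work to lie. First, in the case $M = T^n$ one must show that the winding of the \emph{first} $T^2$-coordinate around the $M$-factor vanishes, i.e.\ $a_1 = \cdots = a_n = 0$: this follows because the second-coordinate relation produces a term $d\,(a_1 t_1 + \cdots + a_n t_n)$, in additive angle coordinates, that is linear and hence non-periodic in the base angles, and cannot be matched by a function periodic on the torus unless each $d a_j = 0$; since $d \ne 0$ this gives $a_j = 0$, while the analogous winding $q_j$ of the \emph{second} coordinate survives because $\phi_{\theta,d,f}$ acts trivially on $p$. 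Second, one must see that the first phase is a true constant rather than a $p$-dependent one: after the fiberwise analysis it is a priori a continuous function $F_1(p)$, but averaging the second-coordinate coboundary equation over the invariant measure forces $d F_1(p) = m\theta + \eta \pmod{\Z}$, so that $F_1(p) = \tfrac{m\theta + \eta + k}{d}$ is independent of $p$. Finally, all the winding integers $m$, $k$, $q_j$ vary continuously in $p$ and are integer-valued, hence constant since $M$ is connected, which assembles the fiberwise data into the two displayed global formulas.

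For the volume statement I would argue measure-theoretically rather than through Jacobians, since $\psi$ is only a homeomorphism. For each fixed $p$ the fiber map $(x,y) \mapsto \big( x e^{2 \pi i \frac{m\theta+\eta+k}{d}}, x^m y\, e^{2 \pi i(g(x)+c(p))} p_1^{q_1}\cdots p_n^{q_n} \big)$ is, exactly as in the ``a posteriori area preserving'' remark of Lemma~\ref{lem:kodaka}, an area preserving homeomorphism of $T^2$, its angular form being a shear that preserves $dx \wedge dy$. Since $\psi$ is the skew product $(p,z) \mapsto (\psi_1(p),\Psi(p,z))$ with every $\Psi(p,\cdot)$ area preserving, Fubini's theorem for the product measure $\mu \wedge dx \wedge dy$ shows that $\psi$ preserves this measure if and only if $\psi_1$ preserves $\mu$, which is the final assertion.
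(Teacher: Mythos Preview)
Your proposal is correct and follows precisely the approach the paper indicates: the lemma is stated without proof, with only the remark that it ``can be proved similarly to Lemma~\ref{lem:kodaka},'' and you have carried out exactly that extension, tracking the base parameter $p$ through Kodaka's computation and handling the two new ingredients (the vanishing of the first-coordinate winding $a_j$ via the second-coordinate relation, and the $p$-independence of the first phase via averaging) cleanly. The volume-preservation argument via Fubini and the fiberwise shear is also the natural one.
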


Conversely, we have the following example.

\begin{exa}
Suppose $f = \eta$ is constant, and $\psi \circ \phi_{\theta,d,\eta} = \phi_{\theta,d,0} \circ \psi$.
Then again $\psi$ has the above form for a function $g$ that satisfies $g (x) - g (e^{2 \pi i \theta} x) = f (x) - \eta = 0$.
Assuming $\psi$ is continuous, $g$ must be continuous, and thus constant.
Therefore
	\[ \psi (x,y) = (x e^{2 \pi i \frac{m \theta + \eta + k}{d}}, x^m y e^{2 \pi i c}) \]
for some $c \in \R$, and is in particular an area preserving diffeomorphism.
That provides an example of two area preserving diffeomorphisms that are conjugated only by (area preserving) diffeomorphisms.
\end{exa}

A thorough study of topological and smooth conjugacy of diffeomorphisms of the circle $S^1$ is carried out in the book by A.~Katok and B.~Hasselblatt \cite{katok:imt96}.
In particular, for any integer $r \ge 0$, the authors construct examples of $C^{r+1}$-smooth diffeomorphisms of $S^1$ that are only conjugated by diffeomorphisms (or homeomorphisms, when $r=0$) of class $C^r$.

\section{Topologically conjugate smooth dynamical systems} \label{sec:conjugate-systems}
We would like to point out that none of the diffeomorphisms in Section~\ref{sec:conjugate-diffeomorphisms} that are conjugated by homeomorphisms (and only homeomorphisms) are isotopic to the identity, since the integer $d$ in the definition of the Furstenberg transformation is nonzero.
We now construct examples of smooth Hamiltonian and strictly contact {\em isotopies} that are conjugated by a homeomorphism but not by symplectic or contact $C^1$-diffeomorphisms, respectively.
We begin by recalling some facts previously used in this work in the form of a well-known and easy to verify lemma.
We state it for autonomous vector fields, but the conclusions of the lemma are equally valid for time-dependent vector fields.

\begin{lemma}
Let $X$ and $Y$ be smooth vector fields on $M$, and $\phi$ a diffeomorphism of $M$.
Then $\phi \circ \phi_X^t \circ \phi^{-1} = \phi_Y^t$ for all $t$ if and only if $\phi_* X = Y$.
If $X =  X_H$ and $Y = X_F$ are Hamiltonian (with respect to some symplectic form if $M$ has even dimension) or strictly contact (with respect to some contact form if $M$ is odd-dimensional), and $\phi$ is symplectic or contact, respectively, then $\phi \circ \phi_H^t \circ \phi^{-1} = \phi_F^t$ for all $t$ if and only if $e^h H = \phi^* F = F \circ \phi$.
Here $\phi^* \alpha = e^h \alpha$ if $\phi$ is a contact diffeomorphism, and $h = 0$ if $\phi$ is symplectic.
\end{lemma}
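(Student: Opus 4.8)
The plan is to reduce every assertion to a statement about the infinitesimal generators, and then read off the equivalences directly from the defining equations of Hamiltonian and strictly contact vector fields. First I would dispatch the general equivalence. The conjugated family $\psi_t = \phi \circ \phi_X^t \circ \phi^{-1}$ is again a smooth one-parameter group of diffeomorphisms, and differentiating at $t = 0$ gives $\frac{d}{dt}\big|_{t=0}\psi_t (x) = d\phi \bigl( X (\phi^{-1} (x)) \bigr) = (\phi_* X) (x)$, so $\{ \psi_t \}$ is precisely the flow of $\phi_* X$ (this is the remark recorded just after Lemma~\ref{lem:change-vol-form}). By uniqueness of integral curves, $\{ \psi_t \} = \{ \phi_Y^t \}$ for all $t$ if and only if $\phi_* X$ and $Y$ generate the same flow, that is, if and only if $\phi_* X = Y$. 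This settles the first claim and reduces the remaining two cases to identifying when $\phi_* X_H = X_F$.

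For the symplectic case I would invoke the naturality identity $\phi^* (\iota_{\phi_* X_H} \omega) = \iota_{X_H} (\phi^* \omega)$, exactly as in (\ref{eqn:change-vol-form}). Since $\phi$ is symplectic, $\phi^* \omega = \omega$, so the right-hand side equals $\iota_{X_H} \omega = dH$, whence $\iota_{\phi_* X_H} \omega = d (H \circ \phi^{-1})$ and therefore $\phi_* X_H = X_{H \circ \phi^{-1}}$ by nondegeneracy of $\omega$. Consequently $\phi_* X_H = X_F$ if and only if $d (H \circ \phi^{-1}) = dF$, i.e.\ $H \circ \phi^{-1}$ and $F$ differ by a constant. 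Because $\phi$ is volume preserving and the Hamiltonians are normalized, this constant vanishes, giving $H \circ \phi^{-1} = F$, equivalently $H = \phi^* F$; as $h = 0$ this is the asserted identity $e^h H = \phi^* F$.

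For the contact case I would instead use that a contact vector field is uniquely determined by its contact Hamiltonian $H = \iota_X \alpha$, as recalled in Section~\ref{sec:contact}; note first that $\phi_* X_H$ is again contact, since its flow preserves $\xi = \ker \alpha$, so that $\alpha (\phi_* X_H)$ is a well-defined contact Hamiltonian. The key computation is the transformation law under a contact diffeomorphism $\phi$ with $\phi^* \alpha = e^h \alpha$: from the pointwise identity $(\phi^* \alpha) (X_H) = \bigl( \alpha (\phi_* X_H) \bigr) \circ \phi$ together with $(\phi^* \alpha) (X_H) = e^h \alpha (X_H) = e^h H$, one obtains $\alpha (\phi_* X_H) = (e^h H) \circ \phi^{-1}$. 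Hence $\phi_* X_H = X_F$ if and only if the two contact Hamiltonians agree, $(e^h H) \circ \phi^{-1} = F$, i.e.\ $e^h H = F \circ \phi = \phi^* F$, as claimed.

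The individual computations are all routine, so the only point requiring genuine care is the bookkeeping of the conformal factor $e^h$ in the contact case and of the additive constant in the symplectic case. For the latter I would emphasize that $\phi$ preserves the volume form, so the chosen normalization of Hamiltonian functions forces the constant to be zero. For the former, the pleasant consistency to record is that when $F$ is basic the relation $e^h H = \phi^* F$ already forces $\phi_* X_H = X_F$ to be strictly contact, so no extra hypothesis on $h$ is needed; one may verify this via $\mathcal L_{\phi_* X_H} \alpha = \bigl( (X_H . h) \circ \phi^{-1} \bigr)\, \alpha$, which is the most delicate line of bookkeeping, although it is not logically required for the stated equivalence.
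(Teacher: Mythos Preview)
The paper does not actually prove this lemma: it is introduced as ``a well-known and easy to verify lemma'' and stated without proof. Your argument supplies precisely the routine verification the paper omits, and it is correct in all three parts---the identification of the generator of the conjugated flow, the naturality computation in the symplectic case (with the normalization remark handling the additive constant), and the contact-Hamiltonian computation $\alpha(\phi_* X_H) = (e^h H)\circ\phi^{-1}$. The final paragraph on the conformal factor and on $\phi_* X_H$ being strictly contact is extra commentary, not needed for the equivalence, as you yourself note.
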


The proposition we are stating next by-passes the reference to the vector fields, and directly relates the isotopies to their Hamiltonian functions.

\begin{prop}{\cite{mueller:ghh07, banyaga:ugh11}}\label{pro:uniqueness}
Suppose $\Phi_H$ and $\Phi_F$ are continuous or smooth Hamiltonian isotopies (of a symplectic manifold) or strictly contact isotopies (of a regular contact manifold), and $\phi$ is a symplectic homeomorphism or the uniform limit of strictly contact diffeomorphisms.
Then $\phi \circ \phi_H^t \circ \phi^{-1} = \phi_F^t$ for all $t$ if and only if $H = F \circ \phi$.
\end{prop}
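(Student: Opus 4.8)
The plan is to obtain the proposition as a formal consequence of two facts already in hand: the transformation law under conjugation for continuous Hamiltonian and strictly contact isotopies, and the uniqueness theorems of topological Hamiltonian and contact dynamics. I would treat the symplectic and the regular contact cases in parallel, writing $\Phi_H = \{\phi_H^t\}$ and $\Phi_F = \{\phi_F^t\}$. Two preliminary observations set the stage. First, since $\phi$ is a symplectic homeomorphism (respectively a uniform limit $\phi = \lim_i \phi_i$ of strictly contact diffeomorphisms), its inverse is again of the same type; on the compact manifold $M$ uniform convergence $\phi_i \to \phi$ to a homeomorphism forces $\phi_i^{-1} \to \phi^{-1}$ uniformly, so the transformation law may be applied with either $\phi$ or $\phi^{-1}$. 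Second, $\phi$ preserves the relevant volume form ($\omega^n$, respectively $\alpha \wedge (d\alpha)^n$) and hence the average value $c$, so that $F \circ \phi$ and $H \circ \phi^{-1}$ remain admissible generating functions (mean value zero in the symplectic case, basic in the contact case). Because $\phi$ preserves $\alpha$ in the contact case, the conformal factor appearing in the preceding smooth lemma is trivial, which is why the clean identity $H = F \circ \phi$ (without an $e^h$ term) is the right statement.

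For the forward implication I would argue as follows. Assume $\phi \circ \phi_H^t \circ \phi^{-1} = \phi_F^t$ for all $t$, i.e.\ $\phi \circ \Phi_H \circ \phi^{-1} = \Phi_F$. Applying the transformation law with $\phi^{-1}$ in place of $\phi$ --- the version recorded in Section~\ref{sec:topo-ham} for symplectic homeomorphisms, and the convergence $H_i \circ \phi_i \to H \circ \phi$ established in the proof of Theorem~\ref{thm:strictly-contact-homeo-conjugation} (following \cite{banyaga:ugh11}) in the contact case --- the left-hand isotopy is generated by the continuous Hamiltonian $H \circ \phi^{-1}$, while the right-hand isotopy is generated by $F$. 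The uniqueness of the generating Hamiltonian, namely the Buhovsky--Seyfaddini strengthening of Viterbo's theorem \cite{viterbo:ugh06, buhovsky:ugh11} in the symplectic case and its strictly contact analogue in \cite{banyaga:ugh11}, then yields $H \circ \phi^{-1} = F$, equivalently $H = F \circ \phi$.

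For the reverse implication I would start from $H = F \circ \phi$. The transformation law gives that $\phi \circ \Phi_H \circ \phi^{-1}$ is generated by $H \circ \phi^{-1} = (F \circ \phi) \circ \phi^{-1} = F$, which is also the generating function of $\Phi_F$. Here I invoke the complementary uniqueness statement --- that a continuous Hamiltonian isotopy is uniquely determined by its generating function \cite{mueller:ghh07}, together with its strictly contact counterpart \cite{banyaga:ugh11} --- to conclude $\phi \circ \Phi_H \circ \phi^{-1} = \Phi_F$, i.e.\ $\phi \circ \phi_H^t \circ \phi^{-1} = \phi_F^t$ for all $t$.

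I do not expect a genuine analytic obstacle: the substantive depth is entirely imported from the cited uniqueness theorems, and granting those the argument is a short formal assembly. The point that demands real care is the bookkeeping --- tracking which of the two distinct uniqueness statements (uniqueness of the Hamiltonian in the forward direction, uniqueness of the isotopy in the reverse direction) is used where, and verifying that conjugation by $\phi$ keeps one inside the class of normalized (basic, resp.\ mean-value-zero) generating functions so that these uniqueness theorems actually apply. A secondary subtlety worth checking is that the smooth case is not merely the preceding lemma: even for smooth $\Phi_H$ and $\Phi_F$ the map $\phi$ may be only a homeomorphism, so the topological framework is genuinely required throughout.
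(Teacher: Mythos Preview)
Your argument is correct: the proposition is precisely the formal combination of the transformation law under conjugation by symplectic (respectively strictly contact) homeomorphisms with the two uniqueness theorems --- uniqueness of the generating Hamiltonian for the forward implication, uniqueness of the isotopy for the reverse implication. The bookkeeping you flag (that $\phi^{-1}$ is again an admissible conjugating homeomorphism, and that normalization is preserved) is handled correctly.

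As for comparison with the paper: the paper does \emph{not} supply its own proof of this proposition. It is stated with a citation to \cite{mueller:ghh07, banyaga:ugh11} and used as a black box in the subsequent examples. The ingredients you invoke --- the transformation law and both uniqueness statements --- are exactly the results the paper recalls in Sections~\ref{sec:topo-contact} and~\ref{sec:topo-ham} from those same references, so your derivation is the intended one and matches what those references contain.
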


\begin{exa}\label{exa:non-conjugate-surface}
Let $(M^2,\omega)$ be a symplectic surface, and $F$ be a smooth function on $M$ that in local (Darboux) coordinates near some point in $M$ has the form $F (r, \theta) = e^{- f (r, \theta)}$, where
	\[ f (r, \theta) = \frac{4}{r^2 (1 + 15 \cos^2 \theta)} \]
is the composition of the map $(r, \theta) \mapsto \frac{1}{r^2}$ with the area preserving change of coordinates $(x,y) \mapsto (2 x, \frac{y}{2})$.
Here $r \in \R_{\ge 0}$ and $\theta \in \R / 2 \pi \Z$ denote polar coordinates, and $x = r \cos \theta$ and $y = r \sin \theta$ denote rectangular coordinates in the plane.
By cutting off the Hamiltonian $F$ outside a neighborhood of the origin, we may assume it is compactly supported in the domain of the Darboux chart.

For a disk $D^2 \subset \R^2$ contained in the domain of the Darboux chart and centered at the origin, let $\phi_\rho \colon D^2 \to D^2$ be an area preserving homeomorphism compactly supported in the interior of $D^2$, defined by $(r,\theta) \mapsto (r, \theta + \rho (r))$ for $r > 0$, and $\phi_\rho (0) = 0$ at the origin, where $\rho \colon (0,1] \to \R$ is a smooth function with $\rho (r) = 0$ near $r = 1$, cf.\ \cite[Example 4.2]{mueller:ghh07} or \cite[Example 2.6.5]{mueller:ghl08}.
This extends to an area preserving homeomorphism of $M$ by the identity outside $D^2 \subset M$, which is smooth everywhere except at the origin by an appropriate choice of $\rho$.
Indeed, by imposing $\rho > 0$ and $\rho' (r) \to - \infty$ sufficiently fast, $\phi_\rho$ is not even Lipschitz.
Consider the function $H (r,\theta) = F \circ \phi_\rho (r,\theta)$, which is obviously smooth away from the origin.
Since $F$ decays exponentially as $r \to 0^+$, $F \circ \phi_\rho$ converges to zero as $r \to 0^+$.
Similarly, one sees all partial derivatives at the origin exist and vanish, and thus $H$ is a smooth function on $M$.
By Proposition~\ref{pro:uniqueness}, we have $\phi_\rho \circ \phi_H^t \circ \phi_\rho^{-1} = \phi_F^t$, that is, the smooth Hamiltonian vector fields $X_H$ and $X_F$ are topologically conjugate.
If $\rho(r)$ grows like $r^{-a}$ as $r \to 0^+$, where $0 < a < 2$, then $\phi_\rho$ becomes a Hamiltonian homeomorphism that is not Lipschitz.
\end{exa}

Recall that on a smooth manifold $M$ of dimension at most three, every homeomorphism can be approximated uniformly by diffeomorphisms, and if a volume preserving homeomorphism can be approximated uniformly by diffeomorphisms, it can also be approximated uniformly by volume preserving diffeomorphisms.
This in particular means every area preserving homeomorphism is a symplectic homeomorphism.

\begin{lemma}
Suppose that $\rho (r)$ grows like $r^{-a - 1}$ as $r \to 0^+$, where $a > 0$.
If $\psi$ is an area preserving homeomorphism of $M$ that conjugates the flows of $H$ and $F$, then $\psi$ is not Lipschitz; consequently no $C^1$-symplectic diffeomorphism conjugates the Hamiltonian vector fields $X_H$ and $X_F$.
\end{lemma}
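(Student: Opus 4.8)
The plan is to exploit the rigidity of the centralizer of the flow of $F$, which is forced by the radial structure of $F$ in suitable area preserving coordinates. First I would observe that, by Example~\ref{exa:non-conjugate-surface}, the homeomorphism $\phi_\rho$ already satisfies $\phi_\rho\circ\phi_H^t\circ\phi_\rho^{-1}=\phi_F^t$. Hence if $\psi$ is any area preserving homeomorphism with $\psi\circ\phi_H^t\circ\psi^{-1}=\phi_F^t$ for all $t$, then the composition $\chi:=\psi\circ\phi_\rho^{-1}$ commutes with the entire flow $\{\phi_F^t\}$. Both $\psi$ and $\phi_\rho$ fix the origin (the latter by definition, the former because it must send the unique fixed point of $\phi_H^t$ to that of $\phi_F^t$), so $\chi$ fixes the origin as well. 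This reduces the problem to understanding homeomorphisms that commute with $\{\phi_F^t\}$ near the origin.

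Next I would pass to the linear area preserving coordinates $(u,v)=(2x,y/2)$, in which $f$ becomes $1/\tilde r^2$ with $\tilde r^2=u^2+v^2$, so that $F=e^{-1/\tilde r^2}$ is radially symmetric and $\tilde r=\frac12\sqrt{16x^2+y^2}$ is a globally Lipschitz function. In these coordinates the Hamiltonian flow is the rotation $(\tilde r,\tilde\theta)\mapsto(\tilde r,\tilde\theta+t\,\Omega(\tilde r))$ with angular speed $\Omega(\tilde r)=2\tilde r^{-4}e^{-1/\tilde r^2}$. A direct computation shows $\Omega$ is strictly monotone on a punctured neighborhood of the origin, since $(\log\Omega)'=(2-4\tilde r^2)/\tilde r^3>0$ there, so distinct circles $\{\tilde r=c\}$ carry the flow with distinct primitive periods. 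Since a homeomorphism commuting with $\{\phi_F^t\}$ must permute its orbits, which near the origin are exactly these circles, and topological conjugacy preserves the minimal period, the monotonicity of $\Omega$ forces $\chi$ to preserve each circle $\{\tilde r=c\}$. Therefore $\tilde r\circ\chi=\tilde r$, and consequently
\[ \tilde r\circ\psi=\tilde r\circ\phi_\rho. \]

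The final step is to show that the right-hand side, and hence $\psi$, is not Lipschitz. Restricting to a radial ray $\{\theta=\theta_0\}$ one computes $\tilde r(\phi_\rho(r,\theta_0))=\frac r2\sqrt{1+15\cos^2(\theta_0+\rho(r))}$, which oscillates between $\approx r/2$ and $\approx 2r$ as the phase $\theta_0+\rho(r)$ sweeps through multiples of $\pi/2$. Because $\rho$ grows like $r^{-a-1}$, consecutive phase increments of $\pi/2$ occur across $r$-intervals of length $\sim|\rho'(r)|^{-1}\sim r^{a+2}$, over each of which $\tilde r\circ\phi_\rho$ changes by an amount of order $r$; the resulting difference quotients are of order $r^{-a-1}\to\infty$. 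Since $\tilde r$ is Lipschitz, $|\tilde r\circ\psi(p)-\tilde r\circ\psi(q)|\le L\,|\psi(p)-\psi(q)|$, so these blowing-up quotients force $\psi$ itself to be non-Lipschitz. A $C^1$-symplectic diffeomorphism conjugating $X_H$ and $X_F$ would be a locally Lipschitz area preserving homeomorphism conjugating the two flows, contradicting this conclusion; hence none exists.

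I would expect the main obstacle to be the second step: proving that a \emph{merely continuous} homeomorphism commuting with $\{\phi_F^t\}$ must preserve the radius $\tilde r$. One cannot differentiate $\chi$, so the argument has to proceed purely through the orbit structure and the matching of primitive periods, which is precisely why $F$ was engineered to be radial with a strictly monotone rotation frequency in area preserving coordinates. Once the invariance $\tilde r\circ\chi=\tilde r$ is secured, the remaining estimate is an elementary oscillation computation.
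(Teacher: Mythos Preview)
Your proof is correct and follows the same overall architecture as the paper: show that $\chi=\psi\circ\phi_\rho^{-1}$ must preserve the concentric ellipses (the level sets of $F$) near the origin, and then exploit the rapid oscillation of $\phi_\rho$ along a ray to contradict any Lipschitz bound. The difference lies in how the level-set preservation is obtained. The paper invokes Proposition~\ref{pro:uniqueness} (uniqueness of the continuous Hamiltonian under symplectic homeomorphisms): since in dimension two every area preserving homeomorphism is a symplectic homeomorphism, one gets $F\circ\psi=H=F\circ\phi_\rho$ directly, hence $F\circ\chi=F$, and the ellipse preservation is immediate. Your route is more elementary and self-contained: you pass to the linear coordinates in which $F$ is radial, compute that the angular frequency $\Omega(\tilde r)$ is strictly monotone, and use period matching to force $\chi$ to fix each circle. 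This avoids the deep $C^0$-rigidity input, at the price of a little extra care---your claim that the origin is ``the unique fixed point'' is not literally true because of the cutoff; the clean statement is that the periods $2\pi/\Omega(\tilde r)\to\infty$ as $\tilde r\to 0^+$, so all sufficiently small circles can only be matched with themselves, and continuity then forces $\chi(0)=0$. The paper's argument is shorter and yields the stronger identity $H=F\circ\psi$; yours shows that the conclusion actually depends only on the monotone-twist structure of $F$ and not on the uniqueness theorem. The final oscillation estimate is the same in both proofs, only phrased slightly differently.
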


\begin{proof}
Because $\psi$ is a symplectic homeomorphism, $F\circ \phi_\rho = H = F \circ \psi$ by Proposition~\ref{pro:uniqueness}, and thus $F\circ (\psi \circ \phi_\rho^{-1}) = F$.
That is, the homeomorphism $\psi \circ \phi_\rho^{-1}$ preserves the level sets of $F$, which near the origin are concentric ellipses centered at the origin.
By hypothesis, there exist two sequences of positive numbers $r_n > r_n' \to 0^+$ such that $ r_n - r_n' < r_n^{2}$, $\rho (r_n) = \pi / 2$ mod $2\pi$, and $\rho (r_n') = \pi$ mod $2 \pi$.
Assuming $\psi$ is Lipschitz, its Lipschitz constant $L > 0$ must obey the inequalities
	\[ L \ge \frac{\left| \psi \left( \phi_\rho^{-1} (r_n, \frac{\pi}{2}) \right) - \psi \left( \phi_\rho^{-1} (r_n',\pi) \right) \right|}{\left| \phi_\rho^{-1} (r_n, \frac{\pi}{2}) - \phi_\rho^{-1} (r_n',\pi) \right|} \ge \frac{r_n - \frac{r_n'}{4}}{r_n - r_n'} > \frac{1}{4} \left( \frac{3}{r_n} + 1 \right) \rightarrow + \infty. \]
The middle inequality holds because $\phi_\rho^{-1} = \phi_{- \rho}$, and $\psi \circ \phi_\rho^{-1}$ preserves the concentric ellipses.
Thus $\psi$ cannot be Lipschitz continuous, and there is no symplectic $C^1$-diffeomorphism conjugating the two Hamiltonian vector fields.
\end{proof}

In fact, the lemma still holds if $\rho(r)$ grows like $r^{-a}$ for $a>0$.
Then there exists $0 < \epsilon < a$, and sequences $r_n$ and $r'_n$ as above, except that $r_n - r'_n < r_n^{a - \epsilon + 1}$.
In this case one obtains the inequality $L > (1/4)(3r_n^{- a + \epsilon} + 1) \to + \infty$.
It is possible to embed countably many disjoint disks (of shrinking radii) into any surface $(M,\omega)$, producing examples where $\psi$ fails to be $C^1$ at at least countably many points.

\begin{exa}\label{exa:non-lipschitz}
Let $(M^{2n},\omega)$ be a symplectic manifold of dimension $2n$.
In local Darboux coordinates $(r_1, \ldots, r_n, \theta_1, \ldots, \theta_n)$, consider the autonomous Hamiltonian
	\[ G (r_1, \ldots, r_n, \theta_1, \ldots, \theta_n) = \int_r^1 s \rho(s)\, ds,\]
where $r = \sqrt{r_1^2 + \cdots r_n^2}$.
For an appropriate choice of $\rho$ as above, this Hamiltonian generates the Hamiltonian homeomorphism $\phi_\rho: M \to M$, given by $\phi_\rho(0) = 0$, and
	\[ \phi_\rho(r_1, \ldots, r_n, \theta_1, \ldots, \theta_n) = (r_1, \ldots, r_n, \theta_1 + \rho(r), \ldots, \theta_n + \rho(r)), \]
c.f.\ \cite{mueller:ghh07, mueller:ghc08, buhovsky:ugh11}.
Define $\overline F =  e^{-f(r_1, \theta_1)}$, where the function $f$ is as in Example~\ref{exa:non-conjugate-surface}, and $\overline H = \overline F \circ \phi_\rho$.
Arguing as above, we obtain two smooth Hamiltonian (and in particular exact divergence-free) vector fields $X_{\overline F}$ and $X_{\overline H}$ on $M$ whose Hamiltonian isotopies are conjugated by a symplectic homeomorphism that is not even Lipschitz.
Moreover, there does not exist a $C^1$-symplectic diffeomorphism nor Lipschitz symplectic homeomorphism conjugating the two isotopies.
\end{exa}

If $M$ is noncompact and $\psi$ is a conformally symplectic $C^1$-diffeomorphism, i.e.\ $\psi^* \omega = c \omega$, then $c F = F \circ (\psi \circ \phi_\rho^{-1})$, and the same argument as above applies to show $\psi$ is not Lipschitz.

\begin{exa}
Let $S^1 \to M^{2 n + 1} \to B^{2 n}$ be the prequantization bundle over a closed symplectic manifold $(B^{2 n},\omega)$ with projection $p \colon M \to B$.
Let $F$ and $H = F \circ \phi_\rho$ be smooth functions on $B$ as in Example~\ref{exa:non-lipschitz}.
Then $\tilde F = F \circ p$ and $\tilde H = H \circ p = F \circ (\phi_\rho \circ p)$ are basic functions on $M$, and thus generate strictly contact isotopies of $M$.
Since $\phi_\rho$ is a Hamiltonian homeomorphism, it induces a well-defined strictly contact homeomorphism $\tilde{\phi}_\rho$ of $M$ with $\phi_\rho \circ p = p \circ \tilde{\phi}_\rho$ \cite{banyaga:ugh11}.
Then $\tilde{H} = F \circ (\phi_\rho \circ p) = F \circ (p \circ \tilde{\phi}_\rho) = \tilde{F} \circ \tilde{\phi}_\rho$, and the strictly contact isotopies of $\tilde{F}$ and $\tilde{H}$ are topologically conjugate by Proposition~\ref{pro:uniqueness}.
If $\tilde{\psi}$ is any other contact diffeomorphism on $M$ (or the uniform limit of strictly contact diffeomorphisms) conjugating the two isotopies, then $e^h \tilde{F} = \tilde{F} \circ (\tilde{\psi} \circ \tilde{\phi}_\rho^{-1})$ for a smooth and thus bounded function $h$ on $M$.
By the same argument as above, any such $\tilde \psi$ has regularity less than Lipschitz.
\end{exa}

If we allow the Hamiltonian vector fields to be time-dependent, we can produce examples of vector fields not conjugated by {\em any} $C^1$-diffeomorphism.

\begin{exa}
Let $f_t (r, \theta)$ be a smooth function on $[0,1] \times \R^2$ that near the origin is given by composition of the function $(r, \theta) \mapsto \frac{1}{r^2}$ with a time-dependent area preserving change of coordinates equal to $(x, y) \mapsto (\sigma (t) x, \frac{y}{\sigma (t)})$, where $\sigma (t) = 2$ near $t = 0$ and $\sigma (t) = \frac{1}{2}$ near $t = 1$, and define a smooth function by $F_t (r, \theta) = e^{- f_t (r, \theta)}$.
Let $\phi_\rho$ be as above, and $H_t = F_t \circ \phi_\rho^{-1}$.
By the same argument as before, this gives rise to smooth Hamiltonian functions on the surface $(M,\omega)$, and their Hamiltonian isotopies are topologically conjugate, $\phi_\rho \circ \phi_F^t = \phi_H^t \circ \phi_\rho$.
\end{exa}

\begin{lemma}
The Hamiltonian isotopies $\{ \phi_F^t\}$ and $\{ \phi_H^t\}$ of $M$ are not $C^1$-conjugate, i.e.\ there exists no $C^1$-diffeomorphism $\psi$ of $M$ such that $\psi \circ \phi_F^t = \phi_H^t \circ \psi$.
\end{lemma}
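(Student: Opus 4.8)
The plan is to reduce the statement to the Lipschitz obstruction already established for conformally symplectic conjugacies, the genuinely new ingredient being that the \emph{time dependence} of $F_t$ forces any $C^1$ conjugacy to be conformally symplectic near the origin. First I would localize near the origin $0$, which is the only fixed point of each $\phi_F^t$ (and each $\phi_H^t$) in the interior of the support of the Hamiltonian and is dynamically distinguished, every punctured neighbourhood carrying nonconstant orbits; hence a conjugating homeomorphism maps $0$ to $0$, and I take $\psi(0)=0$. Since $\sigma(t)\equiv 2$ for $t$ near $0$ and $\sigma(t)\equiv\tfrac12$ for $t$ near $1$, on these subintervals $F_t\equiv F_0$ and $F_t\equiv F_1$ are autonomous, so $\phi_F^t=\phi_{F_0}^t$ near $t=0$ and $\phi_F^t=\phi_{F_1}^{\,t-t_0}\circ\phi_F^{t_0}$ near $t=1$, and likewise for $H$. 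Writing $a=\phi_F^{t_0}$ and using $\psi\circ\phi_F^t\circ\psi^{-1}=\phi_H^t$, one checks by cancelling $a$ that $\psi$ separately conjugates the two autonomous flows: $\psi\circ\phi_{F_0}^s\circ\psi^{-1}=\phi_{H_0}^s$ and $\psi\circ\phi_{F_1}^s\circ\psi^{-1}=\phi_{H_1}^s$ for all $s$.

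The crucial step is to show that $\psi$ is conformally symplectic near the origin. Write $\psi_*\omega=\lambda\,\omega$ for a continuous function $\lambda$. Because $\psi$ conjugates the $F_i$-flows to the $H_i$-flows, $\psi_*\omega$ is invariant under both autonomous flows $\phi_{H_0}^s$ and $\phi_{H_1}^s$, and a continuous invariant density for an autonomous Hamiltonian flow on a surface is constant along its orbits, i.e.\ along the level sets. Pulling back by the area preserving homeomorphism $\phi_\rho$, which satisfies $\phi_\rho\circ\phi_{F_i}^s\circ\phi_\rho^{-1}=\phi_{H_i}^s$ and hence carries $F_i$-level sets to $H_i$-level sets, the function $\lambda\circ\phi_\rho$ is constant on the level curves of $F_0$ and simultaneously on those of $F_1$. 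But the two ellipse families $\{4x^2+\tfrac14 y^2=\mathrm{const}\}$ and $\{\tfrac14 x^2+4y^2=\mathrm{const}\}$ meet transversally off the coordinate axes, their gradients being parallel only where $xy=0$, so a continuous function constant on both foliations is locally constant, hence constant near $0$. Therefore $\lambda\circ\phi_\rho$, and with it $\lambda$, equals a nonzero constant $c$; that is, $\psi^*\omega=c\,\omega$ in a neighbourhood of the origin.

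Finally I would run the established estimate. By the conformally symplectic form of Proposition~\ref{pro:uniqueness} (the remark following the Lipschitz lemma above), the relation $\psi\circ\phi_{F_0}^s\circ\psi^{-1}=\phi_{H_0}^s$, together with $\psi^*\omega=c\,\omega$ and $H_0=F_0\circ\phi_\rho^{-1}$, yields $F_0\circ(\phi_\rho^{-1}\circ\psi)=c\,F_0$; thus $\phi_\rho^{-1}\circ\psi$ preserves the concentric level ellipses of $F_0$. Exactly as in the proof of the Lipschitz lemma above and its refinement for $\rho(r)\sim r^{-a}$, applied with $\psi$ replaced by $\psi^{-1}$ and $\phi_\rho$ by $\phi_\rho^{-1}=\phi_{-\rho}$, one chooses radii $r_n>r_n'\to 0^+$ with $\rho(r_n)\equiv\pi/2$, $\rho(r_n')\equiv\pi\pmod{2\pi}$ and $r_n-r_n'$ much smaller than $r_n$, and bounds the difference quotient of $\psi$ from below by a quantity tending to $+\infty$, using that $\phi_\rho^{-1}\circ\psi$ carries concentric ellipses to concentric ellipses and that $c$ is a constant. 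Hence $\psi$ cannot be Lipschitz, let alone $C^1$, a contradiction. The main obstacle is the middle step, extracting conformal symplecticity from the bare conjugacy relation, where the transversality of the two ellipse families coming from the endpoint values $\sigma(0)=2$ and $\sigma(1)=\tfrac12$ is precisely what the time dependence buys us; establishing $\psi(0)=0$ and that the invariant density is genuinely constant, including continuity across the axes and at the origin, are the points that require the most care.
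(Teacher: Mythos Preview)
Your proposal is correct and follows essentially the same strategy as the paper: first show that a putative $C^1$ conjugacy $\psi$ must be conformally symplectic near the origin by exploiting the two transverse families of level ellipses coming from $\sigma(0)=2$ and $\sigma(1)=\tfrac12$, and then feed this into the Lipschitz estimate already established for conformally symplectic conjugacies. The only differences are cosmetic: the paper works with the pull-back $\psi^*\omega$ and the $F$-flows directly (so the invariant density is $\det d\psi$, constant along the $F_0$- and $F_1$-ellipses without an extra pull-back by $\phi_\rho$), whereas you push forward and then pull back by $\phi_\rho$; and you make explicit two points the paper leaves implicit, namely that $\psi(0)=0$ and that $\psi$ separately conjugates the two autonomous endpoint flows.
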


\begin{proof}
Arguing by contradiction, suppose there exists a $C^1$-diffeomorphism $\psi$ such that $\psi \circ \phi_F^t = \phi_H^t \circ \psi$.
Then in local coordinates, the diffeomorphism $\phi_F^t$ preserves the area form $\psi^* \sigma = \det d\psi(x)\, \sigma$.
Thus $\det d\psi (x) = \det d\psi (\phi_F^t (x))$ for all $x$ near zero.
Near $t=0$ and $t=1$, the flow of $F$ follows concentric ellipses centered at the origin but with major axis at $t=0$ perpendicular to the major axis at $t=1$.
This implies $\det d\psi(x) = c$ is independent of $x$, or $\psi$ is conformally symplectic, at least near the origin.
Since the transformation law is a local statement, we have $c  F_t = H_t \circ \psi$, or $ c F_t \circ (\psi^{-1} \circ \phi_\rho) = F_t$ near the origin.
By essentially the same argument as above, the local inverse $\psi^{-1}$ is not Lipschitz near the origin, a contradiction.
\end{proof}

Note that the examples can be modified so that $F_t$ is $C^\infty$-close to an autonomous Hamiltonian.
The statement that there exists no $C^1$-map $\psi$ such that $\psi \circ \phi_F^t = \phi_H^t \circ \psi$ is false: if $\psi_2$ is the constant map $\psi_2 (p, x) = x_0$, where $x_0$ is any point in $\Sigma$ at which $X_H^t$ vanishes for all $t$ (e.g.\ the origin or a point on the corresponding Reeb circle in the examples above), the above identity holds.

As mentioned in the introduction, one can also define the helicity as follows: for two points $x$ and $y \in M$ and two times $t_1$ and $t_2$, consider the pieces of trajectories $\phi_X^t (x)$, $0 \le t \le t_1$, and $\phi_X^t (y)$, $0 \le t \le t_2$, and close them up to loops using a `system of short paths' in $M$.
The asymptotic linking number of these two loops is defined, and the helicity equals the average of these asymptotic linking numbers over $M \times M$.
See for example \cite{ghys:kd07} for details.
This alternate definition suggests that the helicity could be invariant under topological conjugation.
However, the system of short paths considered above to close up the pieces of trajectories may become tangled up when conjugating with a homeomorphism, so invariance of the helicity is not obvious.
We now observe that the problem is indeed a local one.

\begin{lemma}
Let $\{ U_i \}$ be an open cover of a closed smooth three-manifold $M$ with volume form $\mu$.
The helicity is invariant under conjugation by volume preserving homeomorphisms, if and only if it is invariant under conjugation by volume preserving homeomorphisms that are isotopic to the identity through isotopies of volume preserving homeomorphisms that are supported in an open set $U_i$ (and with vanishing mass flow).
\end{lemma}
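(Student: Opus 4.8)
The forward implication is immediate: a volume preserving homeomorphism that is isotopic to the identity through volume preserving homeomorphisms supported in some $U_i$ is in particular a volume preserving homeomorphism, so global conjugation invariance specializes at once to the local statement. The content is the reverse implication, and my plan is to write an arbitrary conjugating homeomorphism as a finite composition of the local ones by a fragmentation argument, and then exploit that conjugation is a group action. Throughout I write $\mathrm{Ad}_\psi$ for the operation $\{\phi_t\}\mapsto\{\psi\circ\phi_t\circ\psi^{-1}\}$ of conjugation by $\psi$, and I regard $\Hel$ as an invariant of the (continuous, volume preserving) isotopy generated by a vector field.

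First I would dispose of the discrete and the flux parts of an arbitrary conjugator using the invariance under volume preserving \emph{diffeomorphisms} already recorded in Lemma~\ref{lem:change-vol-form}. Given smooth exact divergence-free $X$, $Y$ and a volume preserving homeomorphism $\phi$ with $\{\phi\circ\phi_X^t\circ\phi^{-1}\}=\{\phi_Y^t\}$, I would choose a volume preserving \emph{diffeomorphism} $\phi_0$ lying in the same path component of $\Homeo(M,\mu)$ as $\phi$ and carrying the same mass flow; such a $\phi_0$ exists because in dimension three every component of the group of volume preserving homeomorphisms contains a volume preserving diffeomorphism (by the approximation theorems already cited in the paper), and because the flux homomorphism is surjective when restricted to volume preserving diffeomorphisms. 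Writing $\phi=\phi_0\circ\tilde\phi$, the homeomorphism $\tilde\phi=\phi_0^{-1}\circ\phi$ is isotopic to the identity and has vanishing mass flow. Setting $\phi_Z^t=\phi_0^{-1}\circ\phi_Y^t\circ\phi_0$ then defines a \emph{smooth} exact divergence-free isotopy (a diffeomorphism conjugate of $Y$) with $\Hel(Z)=\Hel(Y)$ by Lemma~\ref{lem:change-vol-form}, and by construction $\{\phi_Z^t\}=\{\tilde\phi\circ\phi_X^t\circ\tilde\phi^{-1}\}$. Thus it suffices to establish invariance for conjugators $\tilde\phi$ that are isotopic to the identity and have vanishing mass flow.

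The heart of the argument is a fragmentation lemma in the measure preserving topological category: any volume preserving homeomorphism that is isotopic to the identity through volume preserving homeomorphisms and has vanishing mass flow (in the sense of Fathi's mass flow homomorphism, the $C^0$ analogue of the flux) can be written as a finite composition $\tilde\phi=\psi_1\circ\cdots\circ\psi_k$, where each $\psi_j$ is supported in a single set $U_i$ of the cover and is isotopic to the identity through volume preserving homeomorphisms supported in that $U_i$; since the $U_i$ may be taken contractible, these supported isotopies automatically carry no mass flow, matching the hypothesis of the local statement. Granting this, I would finish by peeling off the factors one at a time: because conjugation is a left action, $\mathrm{Ad}_{\tilde\phi}=\mathrm{Ad}_{\psi_1}\circ\cdots\circ\mathrm{Ad}_{\psi_k}$, and applying the local hypothesis successively to the isotopies $\mathrm{Ad}_{\psi_j\circ\cdots\circ\psi_k}(X)$ for $j=k,\dots,1$ shows that $\Hel$ is unchanged at each step, whence $\Hel(X)=\Hel(Z)=\Hel(Y)$.

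The main obstacle is twofold. First, one needs the $C^0$ fragmentation with simultaneous control of the support, the isotopy class, and the mass flow; this is precisely the sort of statement furnished by Fathi's analysis of the group of measure preserving homeomorphisms, and care is required to confirm that the fragments can be chosen supported in the given cover and carrying no mass flow. Second, and more delicate, the intermediate conjugates $\mathrm{Ad}_{\psi_j\circ\cdots\circ\psi_k}(X)$ are in general only \emph{continuous} volume preserving isotopies rather than smooth ones, so the local hypothesis — and hence the notion of helicity itself — must be read for this wider class of isotopies, which is closed under conjugation by volume preserving homeomorphisms. This is consistent with the continuous extensions of the helicity developed earlier in the paper, and it is exactly the purpose of the lemma to localize the global invariance question to such supported conjugations.
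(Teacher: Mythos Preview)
Your argument follows essentially the same route as the paper's proof: approximate the conjugating homeomorphism by a volume preserving diffeomorphism (using that in dimension three every volume preserving homeomorphism is a $C^0$-limit of volume preserving diffeomorphisms, together with local path-connectedness of $\Homeo(M,\mu)$) to reduce to the identity component, kill the mass flow using surjectivity of the flux on diffeomorphisms, and then invoke Fathi's fragmentation theorem to write the remaining homeomorphism as a finite product of locally supported pieces. The only cosmetic difference is that you combine the first two reductions into a single choice of $\phi_0$, whereas the paper performs them sequentially; the substance is identical. Your closing caveat about the intermediate conjugates $\mathrm{Ad}_{\psi_j\circ\cdots\circ\psi_k}(X)$ being merely continuous isotopies is a genuine subtlety that the paper's proof also leaves implicit, so you have not introduced any gap beyond what is already present there.
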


\begin{proof}
As we noted before, in dimension less or equal to three, every homeomorphism can be approximated uniformly by diffeomorphisms, and a homeomorphism $\phi$ that preserves the measure induced by $\mu$ can be approximated uniformly by volume preserving diffeomorphisms.
The group $\Homeo (M,\mu)$ of volume preserving homeomorphisms of $M$ is locally path-connected in the compact-open topology \cite{fathi:sgh80}.
Thus there exists a volume preserving diffeomorphism $\psi$ sufficiently close to $\phi$ that they are isotopic inside $\Homeo (M,\mu)$, or equivalently, the volume preserving homeomorphism $\phi \circ \psi^{-1}$ is isotopic to the identity in $\Homeo (M,\mu)$.
The helicity is invariant under conjugation by $\phi = (\phi \circ \psi^{-1}) \circ \psi$ if and only if it is invariant under conjugation by $\phi \circ \psi^{-1}$ (Lemma~\ref{lem:change-vol-form}).
Thus without loss of generality we may assume $\phi$ is isotopic to the identity through an isotopy of volume preserving homeomorphisms.
If that is the case, its mass flow is well-defined \cite{fathi:sgh80}.
There is a dual homomorphism, the flux mentioned in Section~\ref{sec:helicity}, for isotopies of volume preserving diffeomorphisms \cite{banyaga:scd97}.
By surjectivity of the flux, there exists an isotopy of volume preserving diffeomorphisms with the same mass flow as the isotopy connecting $\phi$ to the identity.
By the same argument as above, the general case reduces to considering volume preserving homeomorphisms $\phi$ with vanishing mass flow.
Such homeomorphisms can be fragmented into a finite composition $\phi = \phi_m \circ \cdots \circ \phi_1$ of volume preserving homeomorphisms so that each $\phi_k$ is supported (and isotopic to the identity with vanishing mass flow) inside an element of an open cover of $M$ \cite{fathi:sgh80}.
\end{proof}

For example, one may choose as subsets the domains of a Darboux atlas with respect to a contact form on $M$.
We note that a volume preserving diffeomorphism with vanishing flux or mass flow may also be fragmented into diffeomorphisms with `small' support, however, the helicity is not a homomorphism.

\section{Higher-dimensional helicities} \label{sec:higher-dim}
There are several generalizations of helicity to higher dimensions studied for example in \cite{khesin:ghh03, kotschick:lnm03, riviere:hdh02}, see also \cite[Chapter III, 7.B]{arnold:tmh98}.
In \cite{kotschick:lnm03}, the authors consider the linking number of a divergence-free vector field on a manifold of arbitrary dimension with a codimension two foliation endowed with an invariant transverse measure.
In this short section we compute this linking number for a strictly contact vector field on a regular contact manifold.
This simultaneously generalizes Examples 3.8 (Hamiltonian vector fields on closed symplectic manifolds) and 3.9 (Reeb vector fields on closed contact manifolds) in \cite{kotschick:lnm03}.

\begin{prop}
Let $(M^{2 n + 1},\alpha)$ be a closed manifold together with a regular contact form $\alpha$, and let $(B^{2n},\omega)$ be the base of the corresponding Boothby-Wang bundle.
Suppose $A \subset B$ is a closed, oriented, and null-homologous codimension two submanifold, and denote $N = p^{-1} (A) \subset M$, where $p \colon M \to B$ is the projection.
Suppose further $X_H$ is a strictly contact vector field on $(M,\alpha)$, and write $F$ for the unique smooth function on $B$ satisfying $p^* F = H$.
Then
	\[ \Hel (X_H,N) = - n \int_A F \, \omega^{n - 1}. \]
\end{prop}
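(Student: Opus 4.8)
The plan is to reduce $\Hel(X_H, N)$ to an integral over the base $B$ via the Boothby--Wang projection $p$, reusing the fiber-integration identity of the second proof of Theorem~\ref{thm:helicity}. Recall the definition from \cite{kotschick:lnm03}: the oriented codimension-two submanifold $N$ together with its invariant transverse measure is represented by a closed two-form $\eta_N$ on $M$ (the Thom form localized on $N$, or the de Rham current dual to $N$); since $A$ is null-homologous and $N=p^{-1}(A)$, the submanifold $N$ is null-homologous in $M$, so $\eta_N=d\beta$ for a one-form $\beta$, and one sets
\[ \Hel(X_H, N) = \int_M \beta\wedge\iota_{X_H}\mu = \int_M \beta(X_H)\,\mu, \qquad \mu=\alpha\wedge(d\alpha)^n. \]
As in Section~\ref{sec:helicity} this is independent of the choice of $\beta$, because $\iota_{X_H}\mu$ is exact by Theorem~\ref{thm:helicity}.

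First I would build an explicit primitive pulled back from the base. Let $\eta_A$ be the closed Thom two-form on $B$ representing the transverse measure of $A$; as $A$ is null-homologous, $\eta_A=d\gamma$ for a one-form $\gamma$ on $B$. Since $p$ is a fiber bundle and $N=p^{-1}(A)$, the form $p^*\eta_A$ represents the transverse measure of $N$, and $p^*\eta_A=d(p^*\gamma)$, so I take $\beta=p^*\gamma$. Then $\beta\wedge\iota_{X_H}\mu=(p^*\gamma)(X_H)\,\mu$, and using $(p^*\gamma)(X_H)=-p^*(\gamma(X_F))$ (a basic function, via $p_*X_H=-X_F$) together with Proposition~\ref{pro:average}, exactly as in the second proof of Theorem~\ref{thm:helicity},
\[ \Hel(X_H, N)=\int_M (p^*\gamma)(X_H)\,\alpha\wedge(d\alpha)^n=-\int_B \gamma(X_F)\,\omega^n. \]

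Next I would carry out the form calculus on $B$. Since $\gamma\wedge\omega^n$ has degree $2n+1$ on $B^{2n}$ and hence vanishes, Lemma~\ref{lem:differential} gives $\gamma(X_F)\,\omega^n=\gamma\wedge\iota_{X_F}\omega^n=n\,\gamma\wedge dF\wedge\omega^{n-1}$, using $\iota_{X_F}\omega=dF$ and $d\omega^{n-1}=0$. An integration by parts on the closed manifold $B$, namely $\int_B d(F\gamma\wedge\omega^{n-1})=0$, converts this into $\int_B\gamma\wedge dF\wedge\omega^{n-1}=\int_B F\,d\gamma\wedge\omega^{n-1}=\int_B F\,\eta_A\wedge\omega^{n-1}$. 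Finally, the localization property of the Thom form (transverse measure) $\eta_A$ gives $\int_B \eta_A\wedge\xi=\int_A \xi$ for $\xi=F\omega^{n-1}$, whence
\[ \Hel(X_H, N)=-n\int_B F\,\eta_A\wedge\omega^{n-1}=-n\int_A F\,\omega^{n-1}, \]
as claimed. As a consistency check, for $H\equiv 1$ (the Reeb field $R_\alpha$, so $F\equiv 1$) both sides vanish, since $\int_A\omega^{n-1}=\langle[\omega^{n-1}],[A]\rangle=0$ because $A$ is null-homologous.

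I expect the main obstacle to be this last step rather than the differential-form bookkeeping: the identity $\int_B\eta_A\wedge\xi=\int_A\xi$ for the \emph{non-closed} form $\xi=F\omega^{n-1}$ holds only when $\eta_A$ is the localized representative dual to $A$ (the Thom form concentrated near $A$, or the de Rham current), whereas for an arbitrary closed representative in the same class the left-hand side would depend on more than $[\xi]$. Pinning down that the Kotschick--Vogel linking number is computed with precisely this representative supplied by the invariant transverse measure---and confirming, via the independence-of-primitive argument of Section~\ref{sec:helicity} applied to the exact field $X_H$, that the resulting number is insensitive to the chosen localization---is the delicate point; the remaining steps merely reproduce the fiber-integration and Stokes manipulations already used in Section~\ref{sec:helicity-contact}.
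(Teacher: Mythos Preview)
Your argument is correct, but it takes a genuinely different route from the paper's. You dualize: you trade the primitive of $\iota_{X_H}\mu$ for a primitive of (a Thom representative of) the Poincar\'e dual of $N$, push everything down to $B$ via Proposition~\ref{pro:average}, integrate by parts, and then localize with the Thom form of $A$ at the end. The paper instead stays with the definition $\Hel(X_H,N)=\int_N\beta$ and plugs in the explicit primitive $\beta$ of $\iota_{X_H}\mu$ already built in equation~(\ref{eqn:beta}). On $N=p^{-1}(A)$ the term $(n+1)\,p^*\gamma$ vanishes identically (it is the pullback of a $(2n-1)$-form from $B$, while the horizontal part of $TN$ has dimension only $2n-2$), and the remaining term $((n+1)c_H-nH)\,\alpha\wedge(d\alpha)^{n-1}$ fiber-integrates by the partition-of-unity argument of Proposition~\ref{pro:average} to $\int_A((n+1)c_F-nF)\,\omega^{n-1}$; the constant part drops because $A$ is null-homologous.

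The paper's route is shorter and avoids exactly the delicate point you flag: no Thom form, no localization identity for non-closed integrands, just the primitive already in hand and a fiber integration. Your approach, on the other hand, makes visible the symmetry of the linking pairing and would adapt more directly to the general Kotschick--Vogel setting of foliations with invariant transverse measure, where one no longer has a submanifold $N$ to integrate over.
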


This number obviously extends to an invariant of continuous strictly contact isotopies on $(M,\alpha)$, and is invariant under conjugation by uniform limits of strictly contact diffeomorphism, provided that limit preserves $N$.

\begin{proof}
By definition \cite{kotschick:lnm03},
	\[ \Hel (X_H,N) = \int_N \beta, \]
where $\beta$ is a primitive of $\iota_X \mu$, and $\mu$ is the canonical volume form on $M$ induced by $\alpha$.
Since $\int_N \tau = \int_A p_* \tau = 0$ for any closed $(2 n - 1)$-form $\tau$ on $M$ ($A$ is null-homologous), $N$ is null-homologous as well, so that the above integral is well-defined.
Here $p_* \colon H^{2 n - 1} (M) \to H^{2 n - 2} (B)$ is the induced map in the Gysin sequence of the $S^1$-bundle $S^1 \to M \to B$.
Recall by (\ref{eqn:beta}),
	\[ \beta = (n + 1)\, p^* \gamma + ((n + 1) c_H  - n H)\, \alpha \wedge (d\alpha)^{n - 1}, \]
so that by the same partition of unity argument as above, and since $p_{*}R_\alpha = 0$,
	\[ \int_N \beta = \int_A ((n + 1)\, c_H  - n F)\, \omega^{n - 1} = - n \int_A F\, \omega^{n - 1} \]
as claimed.
\end{proof}

See \cite{kotschick:lnm03} for further replacing the submanifold $A$ by an oriented (possibly singular) codimension two foliation $\mathcal F$ with a holonomy-invariant transverse measure.

\appendix

\section{Proof of Proposition~\ref{pro:homotopy-groups}} \label{sec:proof}
\begin{proof}
The short exact sequence (\ref{eqn:s^1-ext}) is a Serre fibration.
Indeed, a homotopy $\Phi_t \colon D^k \to \Ham (M,\omega)$, $0 \le t \le 1$, with respect to the $C^\infty$-topology on $\Ham (B,\omega)$, is a smooth map $D^k \times [0,1] \to \Ham (B,\omega)$, $(s,t) \mapsto \phi_{s,t} \in \Ham (B,\omega)$.
There exists a unique family $F_{s,t} = F_s (t, \cdot)$ of (normalized) smooth Hamiltonian functions such that $\phi_{s,t} = \phi_{F_s}^t \circ \phi_{s,0}$, where $t \mapsto \phi_{F_s}^t$ is the Hamiltonian flow of $F_s$ starting at the identity.
Define $H_{s,t} = p^* F_{s,t} = F_{s,t} \circ p \colon M \to \R$ for $0 \le t \le 1$ and $s \in D^k$, and denote by $t \mapsto \psi_{H_s}^t$ the strictly contact flow of $H_s$ starting at the identity.
Given a lift $\Psi_0 \colon D^k \to \Diff_0 (M,\alpha)$, say $s \mapsto \psi_{s,0}$, of $\Phi_0$, the homotopy $\Psi_t \colon D^k \to \Diff_0 (M,\alpha)$ defined by $s \mapsto \psi_{s,t} = \psi_{H_s}^t \circ \psi_{s,0} \in \Diff (M,\alpha)$, lifts the homotopy $\Phi_t$.
Thus (\ref{eqn:s^1-ext}) has the homotopy lifting property with respect to all disks.

Since $\Ham (B,\omega)$ is path-connected, (\ref{eqn:s^1-ext}) gives rise to a long exact sequence of homotopy groups
\begin{equation} \label{eqn:les}
	\cdots \rightarrow \pi_k (S^1) \rightarrow \pi_k ( \Diff_0 (M,\alpha) ) \rightarrow \pi_k ( \Ham (B,\omega) ) \rightarrow \pi_{k - 1} (S^1) \rightarrow \cdots
\end{equation}

According to \cite[Section 7.2]{polterovich:ggs01}, if $B$ is a closed and connected surface, then $\Ham (B,\omega)$ is $\Z_2$ if $B = S^2$, with generator the one-turn rotation of the sphere, and trivial otherwise.
Moreover, $\pi_2 ( \Ham (B,\omega) ) = 0$ for all closed surfaces $B$.
To see this, recall the inclusion $\Ham (B,\omega) \hookrightarrow \Symp_0 (B,\omega)$ induces an isomorphism on homotopy groups $\pi_k$ for $k > 1$ \cite[Section 10.2]{mcduff:ist98}.
Moser's argument shows that for closed surfaces $B$ the inclusion of $\Symp_0 (B,\omega)$ into $\Diff_0 (B)$ induces isomorphisms on all homotopy groups, see \cite[Section 1.5]{banyaga:scd97} or again \cite{polterovich:ggs01}.
Moreover, the latter is contractible for genus at least $2$, and has strong deformation retract $T^2$ and $SO (3)$ for genus $1$ and $0$, respectively \cite{earle:dgc67}.
Now $\pi_k (T^2)$ obviously vanishes for $k > 1$, and since $SO (3) \cong \R P^3$ has universal covering space $S^3$, we get $\pi_k (SO (3)) \cong \pi_k (S^3)$ for $k > 1$.
Combining these facts, we see that $\pi_2 ( \Ham (B,\omega) )$ indeed vanishes for all closed surfaces $B$.

From the long exact sequence (\ref{eqn:les}) we obtain the description of the fundamental group of $\Diff (M,\alpha)$.
For $k = 2$, recall $\pi_2 ( \Ham (B,\omega) )$ and $\pi_2 (S^1)$ are zero, and therefore $\pi_2 ( \Diff (M,\alpha) ) = 0$.
For $k > 2$, the homotopy groups $\pi_k (S^1)$ and $\pi_{k - 1} (S^1)$ to the left and right vanish, thus $\pi_k ( \Diff (M,\alpha) ) \cong \pi_k ( \Ham (B,\omega) )$, and the claim follows from the same argument as above.
\end{proof} 

\section{A non-regular contact three-manifold} \label{sec:torus}
By Martinet's theorem, any closed three-manifold admits a contact structure.
On the other hand, the torus $T^3$ does not admit a regular contact form.
Consider the contact form $\alpha = \cos z \, dx - \sin z \, dy$ on $T^3$ with induced volume form $dx \wedge dy \wedge dz$, where $x, y, z \in \R / (2 \pi \Z)$.
A basic function on $(T^3,\alpha)$ depends only on $z$ \cite{mueller:fsc11} and thus can be written as a Fourier series
	\[ H (z) = \sum_{n \in \Z} c_n e^{i n z} = \sum_{n = 0}^\infty a_n \cos (n z) + b_n \sin (n z). \]
Its strictly contact vector field $X_H$ has flux
	\[ [ \iota_{X_H} (\alpha \wedge d\alpha) ] = a_1 [ dy \wedge dz ] + b_1 [ dx \wedge dz ], \]
and thus $X_H$ is exact if and only if $a_1 = 0 = b_1$, or equivalently, $c_1 = 0$ \cite{mueller:fsc11}.
\begin{prop}
The helicity of a strictly contact vector field $X_H$ whose contact Hamiltonian $H:T^3 \to \R$ satisfies $c_1 = 0$ is given by
\begin{align}\label{eqn:helicity-three-torus} 
\Hel (X_H) = - \sum_{n \in \Z} ( 3 + \frac{4}{n^2 - 1} ) | c_n |^2 = c_0^2 - 2 \sum_{n > 0} ( 3 + \frac{4}{n^2 - 1} ) | c_n |^2.
\end{align}
In particular, the helicity is bounded from above and below by a multiple of $\| H \|_{L^2}^2$.
\end{prop}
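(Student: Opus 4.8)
The plan is to reduce the whole computation to a single Fourier series in the variable $z$. First I would record the contact data explicitly. A direct computation gives $d\alpha = -\sin z\, dz \wedge dx - \cos z\, dz \wedge dy$ and confirms $\alpha \wedge d\alpha = dx \wedge dy \wedge dz$, and solving $\iota_{R_\alpha} d\alpha = 0$ together with $\iota_{R_\alpha}\alpha = 1$ yields the (non-periodic) Reeb field $R_\alpha = \cos z\, \frac{\partial}{\partial x} - \sin z\, \frac{\partial}{\partial y}$. For a basic Hamiltonian $H = H(z)$ the term $R_\alpha . H$ vanishes automatically, so the defining equations reduce to $\iota_{X_H}\alpha = H$ and $\iota_{X_H} d\alpha = -dH$. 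Solving these by hand forces the $\frac{\partial}{\partial z}$-component of $X_H$ to be zero and gives
	\[ X_H = (H\cos z - H'\sin z)\frac{\partial}{\partial x} - (H\sin z + H'\cos z)\frac{\partial}{\partial y}. \]

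Next I would produce a primitive of the two-form to be integrated. Setting $A = H\cos z - H'\sin z$ and $B = H\sin z + H'\cos z$, one computes $\iota_{X_H}(\alpha \wedge d\alpha) = A\, dy \wedge dz + B\, dx \wedge dz$. Since $A$ and $B$ depend only on $z$, I would seek a primitive of the same shape, $\beta = f(z)\, dx + g(z)\, dy$, which requires exactly $f' = -B$ and $g' = -A$. These ODEs have $2\pi$-periodic solutions precisely when $A$ and $B$ have vanishing mean; expanding $H = \sum_n c_n e^{inz}$ shows the means of $A$ and $B$ are proportional to $c_1 + c_{-1}$ and $c_1 - c_{-1}$, both of which vanish under the exactness hypothesis $c_1 = 0$ (equivalently $a_1 = b_1 = 0$). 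A short wedge computation then gives $\beta \wedge d\beta = (fA - gB)\, dx \wedge dy \wedge dz$, so that $\Hel (X_H)$ is a fixed multiple of $\int_0^{2\pi}(fA - gB)\, dz$.

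The heart of the argument is to evaluate this integral by Plancherel. Substituting the single mode $H = e^{inz}$ shows that the mode-$n$ pieces $A^{(n)}, B^{(n)}$ are supported in the frequencies $n \pm 1$, and their primitives $f^{(n)}, g^{(n)}$ are obtained by dividing those frequencies by $i(n \pm 1)$; this is exactly where the excluded modes $n = \pm 1$, and hence the poles of $\tfrac{4}{n^2-1}$, enter. Expanding $H = \sum_n c_n e^{inz}$ turns the integral into a double sum over pairs $(m,n)$, and $\int_0^{2\pi} e^{ikz}\, dz = 2\pi\,\delta_{k,0}$ restricts the surviving pairs to $m + n \in \{-2, 0, 2\}$. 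The key cancellation I expect to verify is that the off-diagonal contributions with $m + n = \pm 2$ coming from $\int fA$ are cancelled termwise by those from $\int gB$, leaving only the diagonal $n = -m$. Evaluating the diagonal weight gives
	\[ \frac{1}{2\pi}\int_0^{2\pi}\left( f^{(m)} A^{(-m)} - g^{(m)} B^{(-m)} \right) dz = -\frac{3m^2+1}{m^2-1} = -\left( 3 + \frac{4}{m^2 - 1} \right), \]
after the algebraic simplification $\frac{3m^2+1}{m^2-1} = 3 + \frac{4}{m^2-1}$. Summing against $c_m c_{-m} = |c_m|^2$ yields the stated formula (up to the overall volume normalization), and grouping $\pm m$ produces the second displayed form with the isolated $c_0^2$ term, since $3 + \frac{4}{0-1} = -1$.

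Finally, the boundedness assertion is immediate from inspection of the weights: for $m = 0$ the weight is $-(3 - 4) = 1$, while for $|m| \ge 2$ one has $3 < 3 + \frac{4}{m^2-1} \le \frac{13}{3}$, so each weight has absolute value at most $\frac{13}{3}$ and the only positive weight is the one at $m = 0$. Because $\|H\|_{L^2}^2$ is a fixed positive multiple of $\sum_n |c_n|^2$, this traps $\Hel (X_H)$ between fixed multiples of $\|H\|_{L^2}^2$. The main obstacle I anticipate is purely bookkeeping: tracking the frequency shifts $n \mapsto n \pm 1$ and the factors of $i$ in $f^{(n)}, g^{(n)}$ carefully enough that the $m + n = \pm 2$ terms visibly cancel and the diagonal weight collapses to $-(3 + \frac{4}{m^2-1})$. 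Everything else is a routine verification.
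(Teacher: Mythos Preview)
Your approach is correct and is essentially the same as the paper's. Both arguments construct a primitive of $\iota_{X_H}(\alpha\wedge d\alpha)$ that depends only on $z$, compute $\beta\wedge d\beta$, and integrate term by term using the Fourier expansion of $H$; in fact your primitive $f(z)\,dx+g(z)\,dy$ is literally the paper's primitive $F\,dx+G\,dy-H\alpha$ with $f=F-H\cos z$ and $g=G+H\sin z$, the paper's $F,G$ satisfying the simpler equations $F'=-2H\sin z$, $G'=-2H\cos z$. The only notable difference is that the paper's splitting off of $-H\alpha$ makes the integrand appear as $2H(F\cos z - G\sin z)-3H^2$, which visibly parallels the regular-case formula $4c(H)^2-3c(H^2)$ and isolates the $-3H^2$ contribution before any Fourier analysis; your bookkeeping through $fA-gB$ reaches the same diagonal weight $-(3+\tfrac{4}{m^2-1})$ but hides this structure.
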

\begin{proof}
If $c_1 = 0$, then also $c_{-1} = \overline{c}_1 = 0$, and we can define \emph{real} functions
	\[ F (z) = \sum_{n \in \Z} c_n \left( \frac{1}{n + 1} e^{i (n + 1) z} - \frac{1}{n - 1} e^{i (n - 1) z} \right), \]
	\[ G (z) = \sum_{n \in \Z} i c_n \left( \frac{1}{n + 1} e^{i (n + 1) z} + \frac{1}{n - 1} e^{i (n - 1) z} \right). \]
Then
	\[ d (F dx + G dy - H \alpha) = 2 H d\alpha - d (H \alpha) = \iota_{X_H} (\alpha \wedge d\alpha), \]
and a direct computation shows
\begin{eqnarray*}
	&& (F dx + G dy - H \alpha) \wedge d (F dx + G dy - H \alpha) \\
	&& \hspace{1cm} = (2 H (F \cos z - G \sin z) - 3 H^2) \, dx \wedge dy \wedge dz + \text{exact terms},
\end{eqnarray*}
and
	\[ 2 H (F \cos z - G \sin z) - 3 H^2 = \sum_{n,m \in \Z} \left( - \frac{4}{n^2 - 1} - 3 \right) c_n c_m e^{i (n + m) z}. \]
We note that the constant term is $c (H) = c_0 = \int_{T^3} H \, dx \wedge dy \wedge dz$, and also $\sum_{n \in \Z} | c_n |^2 = \| H \|_{L^2}^2$, and thus (\ref{eqn:helicity-three-torus}) follows.
\end{proof}

\bibliography{helicity-arxiv}

\def\cprime{$'$}
\providecommand{\bysame}{\leavevmode\hbox to3em{\hrulefill}\thinspace}
\providecommand{\MR}{\relax\ifhmode\unskip\space\fi MR }
\providecommand{\MRhref}[2]{%
  \href{http://www.ams.org/mathscinet-getitem?mr=#1}{#2}
}
\providecommand{\href}[2]{#2}
\begin{thebibliography}{Kod95b}

\bibitem[AK98]{arnold:tmh98}
Vladimir~I. Arnold and Boris~A. Khesin, \emph{Topological methods in
  hydrodynamics}, Applied Mathematical Sciences, vol. 125, Springer-Verlag, New
  York, 1998. \MR{1612569 (99b:58002)}

\bibitem[Arn86]{arnold:ahi86}
V.~I. Arnol{\cprime}d, \emph{The asymptotic {H}opf invariant and its
  applications}, Selecta Math. Soviet. \textbf{5} (1986), no.~4, 327--345,
  Selected translations. \MR{891881 (89m:58053)}

\bibitem[Ban78a]{banyaga:gdp78}
Augustin Banyaga, \emph{The group of diffeomorphisms preserving a regular
  contact form}, Topology and algebra ({P}roc. {C}olloq., {E}idgen\"oss.
  {T}ech. {H}ochsch., {Z}urich, 1977), Monograph. Enseign. Math., vol.~26,
  Univ. Gen\`eve, Geneva, 1978, pp.~47--53. \MR{511781 (80a:58009)}

\bibitem[Ban78b]{banyaga:sgd78}
\bysame, \emph{Sur la structure du groupe des diff\'eomorphismes qui
  pr\'eservent une forme symplectique}, Comment. Math. Helv. \textbf{53}
  (1978), no.~2, 174--227. \MR{490874 (80c:58005)}

\bibitem[Ban97]{banyaga:scd97}
\bysame, \emph{The structure of classical diffeomorphism groups}, Mathematics
  and its Applications, vol. 400, Kluwer Academic Publishers Group, Dordrecht,
  1997. \MR{1445290 (98h:22024)}

\bibitem[Bla10]{blair:rgc10}
David~E. Blair, \emph{Riemannian geometry of contact and symplectic manifolds},
  second ed., Progress in Mathematics, vol. 203, Birkh\"auser Boston Inc.,
  Boston, MA, 2010. \MR{2682326}

\bibitem[BS11a]{banyaga:ugh11}
Augustin Banyaga and Peter Spaeth, \emph{On the uniqueness of generating
  hamiltonians for topological strictly contact isotopies}, in preparation,
  earlier version {\tt arXiv:0812:2461v3 [math.SG]}, 2012

\bibitem[BS11b]{buhovsky:ugh11}
Lev Buhovsky and Sobhan Seyfaddini, \emph{Uniqueness of generating
  {H}amiltonians for continuous {H}amiltonian flows}, to appear, J. Symplectic
  Geom., {\tt arXiv:1003.2612v2 [math.SG]}

\bibitem[BW58]{boothby:cm58}
W.~M. Boothby and H.~C. Wang, \emph{On contact manifolds}, Ann. of Math. (2)
  \textbf{68} (1958), 721--734. \MR{0112160 (22 \#3015)}

\bibitem[EE69]{earle:dgc67}
Clifford~J. Earle and James Eells, \emph{A fibre bundle description of
  {T}eichm\"uller theory}, J. Differential Geometry \textbf{3} (1969), 19--43.
  \MR{0276999 (43 \#2737a)}

\bibitem[EG00]{etnyre:cth00}
John Etnyre and Robert Ghrist, \emph{Contact topology and hydrodynamics. {I}.
  {B}eltrami fields and the {S}eifert conjecture}, Nonlinearity \textbf{13}
  (2000), no.~2, 441--458. \MR{1735969 (2001b:76008)}

\bibitem[Fat80]{fathi:sgh80}
A.~Fathi, \emph{Structure of the group of homeomorphisms preserving a good
  measure on a compact manifold}, Ann. Sci. \'Ecole Norm. Sup. (4) \textbf{13}
  (1980), no.~1, 45--93. \MR{584082 (81k:58042)}

\bibitem[Fur61]{furstenberg:set61}
H.~Furstenberg, \emph{Strict ergodicity and transformation of the torus}, Amer.
  J. Math. \textbf{83} (1961), 573--601. \MR{0133429 (24 \#A3263)}

\bibitem[Gei08]{geiges:ict08}
Hansj{\"o}rg Geiges, \emph{An introduction to contact topology}, Cambridge
  Studies in Advanced Mathematics, vol. 109, Cambridge University Press,
  Cambridge, 2008. \MR{2397738 (2008m:57064)}

\bibitem[GG97]{gambaudo:ea97}
Jean-Marc Gambaudo and {\'E}tienne Ghys, \emph{Enlacements asymptotiques},
  Topology \textbf{36} (1997), no.~6, 1355--1379. \MR{1452855 (98f:57050)}

\bibitem[GG01]{gambaudo:sac01}
\bysame, \emph{Signature asymptotique d'un champ de vecteurs en dimension 3},
  Duke Math. J. \textbf{106} (2001), no.~1, 41--79. \MR{1810366 (2002k:57007)}

\bibitem[GH55]{gottschalk:td55}
Walter~Helbig Gottschalk and Gustav~Arnold Hedlund, \emph{Topological
  dynamics}, American Mathematical Society Colloquium Publications, Vol. 36,
  American Mathematical Society, Providence, R. I., 1955. \MR{0074810
  (17,650e)}

\bibitem[Ghy07]{ghys:kd07}
{\'E}tienne Ghys, \emph{Knots and dynamics}, International {C}ongress of
  {M}athematicians. {V}ol. {I}, Eur. Math. Soc., Z\"urich, 2007, pp.~247--277.
  \MR{2334193 (2008k:37001)}

\bibitem[Hir63]{hirsch:ots63}
Morris~W. Hirsch, \emph{Obstruction theories for smoothing manifolds and maps},
  Bull. Amer. Math. Soc. \textbf{69} (1963), 352--356. \MR{0149493 (26 \#6980)}

\bibitem[KH95]{katok:imt96}
Anatole Katok and Boris Hasselblatt, \emph{Introduction to the modern theory of
  dynamical systems}, Encyclopedia of Mathematics and its Applications,
  vol.~54, Cambridge University Press, Cambridge, 1995. \MR{1326374 (96c:58055)}

\bibitem[Khe03]{khesin:ghh03}
Boris~A. Khesin, \emph{Geometry of higher helicities}, Mosc. Math. J.
  \textbf{3} (2003), no.~3, 989--1011, 1200, \{Dedicated to Vladimir Igorevich
  Arnold on the occasion of his 65th birthday\}. \MR{2078570 (2005e:55018)}

\bibitem[Kod95a]{kodaka:aft95}
Kazunori Kodaka, \emph{Anzai and {F}urstenberg transformations on the
  {$2$}-torus and topologically quasi-discrete spectrum}, Canad. Math. Bull.
  \textbf{38} (1995), no.~1, 87--92. \MR{1319904 (96m:46114)}

\bibitem[Kod95b]{kodaka:tsc95}
\bysame, \emph{Tracial states on crossed products associated with {F}urstenberg
  transformations on the {$2$}-torus}, Studia Math. \textbf{115} (1995), no.~2,
  183--187. \MR{1347440 (97g:46087a)}

\bibitem[KV03]{kotschick:lnm03}
D.~Kotschick and T.~Vogel, \emph{Linking numbers of measured foliations},
  Ergodic Theory Dynam. Systems \textbf{23} (2003), no.~2, 541--558.
  \MR{1972238 (2004c:37047)}

\bibitem[Mar71]{martinet:fcv71}
J.~Martinet, \emph{Formes de contact sur les vari\'et\'es de dimension {$3$}},
  Proceedings of {L}iverpool {S}ingularities {S}ymposium, {II} (1969/1970)
  (Berlin), Springer, 1971, pp.~142--163. Lecture Notes in Math., Vol. 209.
  \MR{0350771 (50 \#3263)}

\bibitem[MO07]{mueller:ghh07}
Stefan M{\"u}ller and Yong-Geun Oh, \emph{The group of {H}amiltonian
  homeomorphisms and {$C^0$}-symplectic topology}, J. Symplectic Geom.
  \textbf{5} (2007), no.~2, 167--219. \MR{2377251 (2009k:53227)}

\bibitem[MS98]{mcduff:ist98}
Dusa McDuff and Dietmar Salamon, \emph{Introduction to symplectic topology},
  second ed., Oxford Mathematical Monographs, The Clarendon Press Oxford
  University Press, New York, 1998. \MR{1698616 (2000g:53098)}

\bibitem[MS11]{mueller:gcd11}
Stefan M{\"u}ller and Peter Spaeth, \emph{Topological contact dynamics I:
  symplectization and applications of the energy-capacity inequality},
  {\tt arXiv:1110.6705v2 [math.SG]}

\bibitem[M{\"u}l08a]{mueller:ghc08}
Stefan M{\"u}ller, \emph{The group of {H}amiltonian homeomorphisms and
  topological symplectic topology}, 2008, Ph.D. Thesis, The University of Wisconsin - Madison. \MR{2711770}

\bibitem[M{\"u}l08b]{mueller:ghl08}
\bysame, \emph{The group of {H}amiltonian homeomorphisms in the
  {$L^\infty$}-norm}, J. Korean Math. Soc. \textbf{45} (2008), no.~6,
  1769--1784. \MR{2449929 (2010a:53185)}

\bibitem[M{\"u}l11]{mueller:fsc11}
\bysame, \emph{A note on the flux of strictly contact isotopies}, {\tt
  arXiv:1110.6705 [math.SG]}

\bibitem[Mun59]{munkres:osp59}
James Munkres, \emph{Obstructions to the smoothing of piecewise-differentiable
  homeomorphisms}, Bull. Amer. Math. Soc. \textbf{65} (1959), 332--334.
  \MR{0112150 (22 \#3005)}

\bibitem[Mun60]{munkres:osp60}
\bysame, \emph{Obstructions to the smoothing of piecewise-differentiable
  homeomorphisms}, Ann. of Math. (2) \textbf{72} (1960), 521--554. \MR{0121804
  (22 \#12534)}

\bibitem[Mun65]{munkres:hos65}
James~R. Munkres, \emph{Higher obstructions to smoothing}, Topology \textbf{4}
  (1965), 27--45. \MR{0176486 (31 \#758)}

\bibitem[Oh06]{oh:cmp06}
Yong-Geun Oh, \emph{{$C\sp 0$}-coerciveness of {M}oser's problem and smoothing
  area preserving homeomorphisms}, {\tt arXiv:math/0601183v5 [math.DS]}

\bibitem[Pol01]{polterovich:ggs01}
Leonid Polterovich, \emph{The geometry of the group of symplectic
  diffeomorphisms}, Lectures in Mathematics ETH Z\"urich, Birkh\"auser Verlag,
  Basel, 2001. \MR{1826128 (2002g:53157)}

\bibitem[Riv02]{riviere:hdh02}
Tristan Rivi{\`e}re, \emph{High-dimensional helicities and rigidity of linked
  foliations}, Asian J. Math. \textbf{6} (2002), no.~3, 505--533. \MR{1946345
  (2004c:58030)}

\bibitem[Rou90]{rouhani:ftt90}
H.~Rouhani, \emph{A {F}urstenberg transformation of the {$2$}-torus without
  quasi-discrete spectrum}, Canad. Math. Bull. \textbf{33} (1990), no.~3,
  316--322. \MR{1077103 (91m:46108)}

\bibitem[Sik07]{sikorav:avp07}
J.-C. Sikorav, \emph{Approximation of a volume-preserving homeomorphism by a
  volume-preserving-diffeomorphism}, http://www.umpa.ens-lyon.fr/$\sim$symplexe,
  2007.

\bibitem[Vit06]{viterbo:ugh06}
Claude Viterbo, \emph{On the uniqueness of generating {H}amiltonian for
  continuous limits of {H}amiltonians flows}, Int. Math. Res. Not. (2006), Art.
  ID 34028, 9. \MR{2233715 (2007h:37081a)}, \emph{Erratum to: ``On the uniqueness 
  of generating {H}amiltonian...''}, Int. Math. Res. Not. (2006), Art. ID 38748, 4. MR
  2272093 (2007h:37081b) 

\end{thebibliography}
\bibliographystyle{amsalpha}
\end{document}